\title{Annular representations of free product categories}
\author[ \large
S\lowercase{hamindra} G\lowercase{hosh},
C\lowercase{orey} J\lowercase{ones},
M\lowercase{adhav} R\lowercase{eddy}
]
{\bf \large 
	S\lowercase{hamindra} K\lowercase{umar} G\lowercase{hosh},
	C\lowercase{orey} J\lowercase{ones and}
	B M\lowercase{adhav} R\lowercase{eddy}
}
\date{}
\address{Stat-Math Unit, Indian Statistical Institute, Kolkata, INDIA}
\email{shami@isical.ac.in}
\address{Australian National University, Mathematical Sciences Institute, Canberra, AUSTRALIA}
\email{cormjones88@gmail.com}
\address{Stat-Math Unit, Indian Statistical Institute, Kolkata, INDIA}
\email{madhav0903@gmail.com}
\numberwithin{equation}{section}
\numberwithin{figure}{section}
\theoremstyle{plain}
\newtheorem{thm}{Theorem}[section]
\theoremstyle{plain}
\newtheorem{lem}[thm]{Lemma}
\theoremstyle{remark}
\newtheorem{rem}[thm]{Remark}
\theoremstyle{plain}
\newtheorem{cor}[thm]{Corollary}
\theoremstyle{definition}
\newtheorem{defn}[thm]{Definition}
\theoremstyle{definition}
\newtheorem{ex}[thm]{Example}
\theoremstyle{definition}
\theoremstyle{plain}
\newtheorem{prop}[thm]{Proposition}
\theoremstyle{plain}
\newcommand{\comments}[1]{}
\newcommand{\ra}{\rightarrow}
\newcommand{\mcal}{\mathcal}
\newcommand{\N}{\mathbb N}
\newcommand{\Z}{\mathbb Z}
\newcommand{\C}{\mathbb{C}}
\newcommand{\F}{\mathbb{F}}
\newcommand{\vlon}{\varepsilon}
\newcommand{\vphi}{\varphi}
\newcommand{\wt}{\widetilde}
\newcommand{\ID}{\textbf{I}_{\mcal D}}
\newcommand{\IC}{\textbf{I}_{\mcal C}}
\newcommand{\W}{\textbf{W}}
\newcommand{\Irr}{\text{Irr}}
\newcommand{\Hilb}{\textbf{Hilb}}
\keywords{}
\begin{document}
	\global\long\def\vlon{\varepsilon}
	\global\long\def\bt{\bowtie}
	\global\long\def\ul#1{\underline{#1}}
	\global\long\def\ol#1{\overline{#1}}
	\global\long\def\norm#1{\left\|{#1}\right\|}
	\global\long\def\os#1#2{\overset{#1}{#2}}
	\global\long\def\us#1#2{\underset{#1}{#2}}
	\global\long\def\ous#1#2#3{\overset{#1}{\underset{#3}{#2}}}
	\global\long\def\t#1{\text{#1}}
	\global\long\def\lrsuf#1#2#3{\vphantom{#2}_{#1}^{\vphantom{#3}}#2^{#3}}
	\global\long\def\tr{\triangleright}
	\global\long\def\tl{\triangleleft}
	\global\long\def\cc90#1{\begin{sideways}#1\end{sideways}}
	\global\long\def\turnne#1{\begin{turn}{45}{#1}\end{turn}}
	\global\long\def\turnnw#1{\begin{turn}{135}{#1}\end{turn}}
	\global\long\def\turnse#1{\begin{turn}{-45}{#1}\end{turn}}
	\global\long\def\turnsw#1{\begin{turn}{-135}{#1}\end{turn}}
	\global\long\def\fusion#1#2#3{#1 \os{\textstyle{#2}}{\otimes} #3}
	
	\global\long\def\abs#1{\left|{#1}\right	|}
	\global\long\def\red#1{\textcolor{red}{#1}}

\maketitle

\begin{abstract}
We provide a description of the annular representation category of the free product of two rigid C*-tensor categories.

\end{abstract}

\section{Introduction}

Rigid C*-tensor categories have become important in recent years as descriptors of generalized symmetries appearing in noncommutative analysis and mathematical physics.
In operator algebras, they are closely connected to the standard invariants of finite index subfactors, and appear as the representation categories of compact quantum groups.
In the world of physics, they describe the superselection sectors in algebraic quantum field theories, and the structure of local excitations in 2 dimensional topological phases of matter.

An important algebra associated to a rigid C*-tensor category $\mcal{C}$ is the \textit{tube algebra} $\mcal{AC}$, first introduced by Ocneanu \cite{O}.
In the fusion case, this algebra has long been known as a useful tool for understanding the Drinfeld center (see \cite{I,Mu2}), while its importance in the case when $\mcal{C}$ has infinitely many simple objects has recently emerged.
The tube algebra admits a universal C*-algebra, hence has a well behaved representation category (see \cite{GJ}).
This category provides a useful way to describe the \textit{analytic properties of rigid C*-tensor categories}, such as amenability, the Haagerup property, and property (T).
These properties were first introduced by Popa in the context of subfactors (\cite{Po1,Po2}) and generalized to rigid C*-tensor categories by Popa and Vaes (\cite{PV}).
By \cite[Theorem 3.4]{PSV}, this category also provides a representation-theoretic characterization of the category $\mcal{Z}(\text{Ind-}\mcal{C})$, introduced by Neshveyev and Yamashita to provide a categorical understanding of analytic properties (\cite{NY1}).  
In a different direction, the annular representation theory of Temperley-Lieb-Jones categories has proved very useful in the classification of small index subfactor planar algebras (\cite{J2,JR,JMS}).

Unlike rigid C*-tensor categories themselves, whose underlying categorical structure is trivial due to semi-simplicity, the representation category of the tube algebra is a large W*-category, and is complicated to describe.
Thus an important problem is to find concrete descriptions of these large representation categories in terms of representation categories of more familiar C*-algebras such as group C*-algebras.
There are many procedures for producing new rigid C*-tensor categories from old ones, such as Deligne tensor product, equivariantization, $ G $-graded extensions, etc. 
A natural question is, if we understand the annular structure of our starting categories, can we describe the annular representation category of the one we have produced?

One such procedure is the free product construction, due to Bisch and Jones.
In this note, we will provide a decomposition of the category of annular representations of a free product category into the direct sum of four full W*-subcategories, where each component has an illuminating description.
To state the main result, first recall that $C^{*}_{u}(\mcal{C})$ is a C*-completion of the fusion algebra of $\mcal{C}$ with respect to admissible representations.
$Rep_{+}(\mcal{A}\mcal{C})$ denotes the full subcategory of annular representations which contain the fusion algebra, viewed as a corner of $\mcal{A}\mcal{C}$, in their kernels.
Finally, for two rigid C*-tensor categories $\mcal{C}$ and $\mcal{D}$, we let $\W$ be the set of words whose letters are alternatively taken from $\Irr(\mcal{C})\setminus \{[\mathbbm{1}]\}$ and $\Irr(\mcal{D}) \setminus \{[\mathbbm{1}]\}$ of even length with first letter coming from $\Irr(\mcal{C})\setminus \{[\mathbbm{1}]\}$.
We define an equivalence relation on the set $\W$ by $w_{1}\sim w_{2}$ if $w_{1}=uv$ and $w_{2}=vu$.  Set $\W_{0}:=\W/\sim$.
Then the main result of the paper is as follows:

\begin{thm}\label{mainthm}
Let $\mcal{C}$ and $\mcal{D}$ be rigid C*-tensor categories. Then as W*-categories,

$$ Rep(\mcal{A}\left(\mcal{C}*\mcal{D}\right))\cong Rep(C^{*}_{u}(\mcal{C})*C^{*}_{u}(\mcal{D}))\oplus Rep_{+}(\mcal{A}\mcal{C})\oplus Rep_{+}(\mcal{A}\mcal{D})\oplus Rep(\Z)^{\oplus \W_{0}} $$

\end{thm}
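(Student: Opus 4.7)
My plan is to work directly with the tube algebra $\mcal{A}(\mcal{C}*\mcal{D})$, which admits a canonical decomposition as a direct sum of $*$-subalgebras (``blocks'') indexed by the equivalence classes of $\Irr(\mcal{C}*\mcal{D})$ under the relation $X \sim Y$ iff $\t{Hom}(X \otimes Z, Z \otimes Y) \neq 0$ for some $Z \in \Irr(\mcal{C}*\mcal{D})$. Each block is Morita equivalent to its corner at any single representative, and $Rep(\mcal{A}(\mcal{C}*\mcal{D}))$ decomposes as a W*-category accordingly. The plan is to enumerate the equivalence classes, compute a manageable corner of each associated block, and identify it with one of the four summands of the theorem.

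\textbf{The four types of classes.} Recall that $\Irr(\mcal{C}*\mcal{D}) = \{[\mathbbm{1}]\} \sqcup \{\text{alternating words}\}$; in a free product, adjacent different-category letters never merge under tensor product, and this rigidity is the key combinatorial input throughout. The class of $[\mathbbm{1}]$ has corner equal to the fusion algebra of $\mcal{C}*\mcal{D}$; a direct alternating-word calculation identifies this with the algebraic free product of the fusion algebras of $\mcal{C}$ and $\mcal{D}$, whose universal C*-completion is $C^*_u(\mcal{C})*C^*_u(\mcal{D})$ by the universal property, giving the first summand. Next, each non-trivial orbit of $\mcal{AC}$ (respectively $\mcal{AD}$) produces a class in $\mcal{A}(\mcal{C}*\mcal{D})$ whose elements include the original $\mcal{C}$-orbit but possibly also odd-length alternating words with outer letters in the orbit; I aim to show that the map $\mcal{AC} \to \mcal{A}(\mcal{C}*\mcal{D})$ induced by the inclusion $\mcal{C} \hookrightarrow \mcal{C}*\mcal{D}$ restricts to a Morita equivalence on each such non-trivial block, so that summing over these classes recovers $Rep_+(\mcal{AC})$ (and symmetrically $Rep_+(\mcal{AD})$). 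Finally, the cyclic classes of even-length alternating words are parameterized exactly by $\W_0$: for a representative $w$ of length $2n$, the block has corners at the $2n$ cyclic rotations of $w$, and the corner at $w$ itself should be generated by a single unitary ``rotation'' tube (arising from the tube with $Z = w$ shifted by one period) together with its adjoint, giving a dense copy of $\C[\Z]$ whose universal completion $C^*(\Z)$ has representation category $Rep(\Z)$.

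\textbf{Main obstacle.} The most delicate step will be the $\mcal{C}$- and $\mcal{D}$-cases: the block of a non-trivial $\mcal{C}$-orbit in $\mcal{A}(\mcal{C}*\mcal{D})$ is a priori strictly larger than the corresponding block of $\mcal{AC}$, both because new tubes appear (from $Z$'s that are alternating words starting and ending with $\mcal{C}$-letters) and because new irreducibles (odd-length alternating words) may join the orbit. The task is to show neither enlargement changes the Morita class, which I expect to do by exhibiting an explicit Morita bimodule, or equivalently by checking via Frobenius reciprocity and the alternating-word decomposition rules that every new tube factors through tubes with $Z \in \Irr(\mcal{C})$ once the full matrix structure over the orbit is taken into account. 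A secondary subtlety in the $\W_0$-case is ensuring that the winding grading on the corner at $w$ is truly $\Z$ (no torsion) and that each graded piece is one-dimensional; this relies essentially on the rigidity of the free product tensor product, which prevents $w^{\otimes k}$ from acquiring extra summands that could carry extra endomorphisms.
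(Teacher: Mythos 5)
The block decomposition you start from is sound: the relation $X\sim Y$ is indeed an equivalence relation and the tube algebra splits as a direct sum over the classes. But the load-bearing claim that ``each block is Morita equivalent to its corner at any single representative'' is false, and your whole architecture rests on it. A connecting element $f\in\mcal{A}_{X,Y}$ between distinct weights in the same block necessarily lives in a grading $a\neq\mathbbm{1}$, and $f\cdot f^{\#}\in\mcal{A}_{Y,Y}$ is then a positive multiple of $1_{Y}$ \emph{plus} nonzero components graded by the other simple summands of $a\otimes\bar{a}$; there is in general no resolution of identity $1_{Y}=\sum_i x_i\cdot x_i^{\#}$ with $x_i\in\mcal{A}_{X,Y}$, so a representation of a block need not be generated by the weight space of a chosen representative. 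Concretely, for $\mathcal{TLJ}$ all even Jones--Wenzl objects lie in the block of $[\mathbbm{1}]$, yet the lowest-weight-$k$ annular representations of Jones--Reznikoff ($k\geq 1$) are nonzero representations of that block whose weight-$\mathbbm{1}$ space vanishes, so the block cannot be Morita equivalent to the fusion-algebra corner. This is exactly why the paper does not decompose along blocks but along the ideal $\mcal{J}\Gamma_{0}$ generated by the weight-$\mathbbm{1}$ corner, giving $Rep(\mcal{A}\Gamma)\cong Rep_{0}(\mcal{A}\Gamma)\oplus Rep_{+}(\mcal{A}\Gamma)$. With your lemma, the first summand would wrongly absorb the entire $[\mathbbm{1}]$-block into $Rep(C^{*}_{u}(\mcal{C})*C^{*}_{u}(\mcal{D}))$, and the higher-weight representations supported on the adjoint subcategory --- which in the correct statement live inside $Rep_{+}(\mcal{A}\mcal{C})\oplus Rep_{+}(\mcal{A}\mcal{D})$ --- would simply be lost. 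The same defect infects your treatment of the nontrivial $\mcal{C}$- and $\mcal{D}$-orbits, which is a different (and more serious) issue than the ``enlargement'' problem you flag as the main obstacle.

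Two further remarks. Even for the weight-$\mathbbm{1}$ corner, ``by the universal property'' is not enough: $C^{*}_{u}$ is the completion with respect to \emph{admissible} representations only, so identifying $C^{*}_{u}(\mcal{C}*\mcal{D})$ with $C^{*}_{u}(\mcal{C})*C^{*}_{u}(\mcal{D})$ requires showing that a representation of $\text{Fus}(\mcal{C})*\text{Fus}(\mcal{D})$ is admissible if and only if its restrictions to the two factors are (the paper's Lemma \ref{freeadmis}, which in turn needs the combinatorial description of the spaces $\mcal{A}\Lambda_{\emptyset,w}$). On the positive side, the $\W_{0}$ part of your plan is essentially correct and matches the paper: there $\mcal{A}\W$ genuinely is a direct summand, the connecting morphisms between cyclic rotations are unitary, and the corner at a representative is $\C[\Z]$ generated by the rotation by the \emph{minimal} period of $w$; the single-corner reduction is legitimate in that case precisely because the Morita bimodules are implemented by unitaries.
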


We remark that this decomposition is \textit{not} topological, in the sense that the first component $ Rep(C^{*}_{u}(\mcal{C})*C^{*}_{u}(\mcal{D}))$ is not necessarily closed.
In particular, it is possible to have a net of representations from $ Rep(C^{*}_{u}(\mcal{C})*C^{*}_{u}(\mcal{D}))$ converging to a representation in $Rep_{+}(\mcal{A}\mcal{C})\oplus Rep_{+}(\mcal{A}\mcal{D})$ in the Fell topology.  We do not pursue this here, but it would interesting to give a general characterization of the topology for free products.

The outline of the paper is as follows.
In the preliminaries section, we describe the construction of the free product of two rigid C*-tensor categories, as well as the basics of annular representation theory.
In the third section, we show that the representation category of the tube algebra is equivalent to the representation category of another annular algebra with a more convenient weight set.
We then provide a combinatorial analysis of the annular algebra vector spaces.
In the fourth section, we use these results to deduce the main result.
Finally we briefly discuss some examples.

\medskip

\noindent\textbf{Acknowledgements.} The authors would like to thank Dietmar Bisch, Mike Hartglass, David Penneys and Jyotishman Bhowmick for several useful discussions.
A part of this work was completed during the trimester program on von Neumann algebras (during May-Aug, 2016) at Hausdorff Research Institute for Mathematics (HIM) and the authors would like to thank HIM for the opportunity.
Corey Jones was supported by Discovery Projects ‘Subfactors and symmetries’ DP140100732 and ‘Low dimensional categories’ DP160103479 from the Australian Research Council.

\section{Preliminaries}\label{prelim}
\subsection{C*-tensor categories}

We refer the reader to \cite{NT,LR} for detailed definitions and basic results concerning C*-tensor categories, and in particular, rigid C*-tensor categories.

Recall that a $*$-structure on a $\C$-linear category $\mcal{C}$ is a conjugate linear functor from $*: \mcal{C}\rightarrow \mcal{C}^{op}$ which fixes objects, and satisfies $*\circ *=\textbf{Id}_{\mcal{C}}$.  Such a category will be called a $*$-category for short.
A \textit{C*-category} is a $*$-category, such that the morphism spaces $\mcal{C}(a,b)$ are equipped with Banach norms $||\cdot ||_{a,b}$ satisfying $||f^{*}f||_{a,a}=||ff^{*}||_{b,b}=||f||^{2}_{a,b}$ for all $f\in \mcal{C}(a,b)$.
Note that this makes each $\mcal{C}(a,a)$ into a C*-algebra, and we further require that $f^{*}f\ge 0$ in the C*-algebra $\mcal{C}(a,a)$ for all $f\in \mcal{C}(a,b)$. 
A W*-category is a C*-category such that each morphism space has a predual (\cite{GLR}).
We remark that although the norms on the spaces appear as additional structure, being a C* (or W*)-category is actually a property of a $ * $-category.
Indeed, one can take the semi-norms given by the spectral radius, and ask if they satisfy the conditions listed above.
In particular it makes sense to say a $ * $-category \textit{is} a C* (or W*)-category without specifying extra structure.

An object $a$ in a C*-category is called \textit{simple} if $\mcal{C}(a,a)=\C 1_{a}$, and $\mcal{C}$ is called \textit{semi-simple} if it has (unitary) direct sums, (self-adjoint) sub-objects, and every object is isomorphic to the direct sum of finitely many simple objects.
Note that in a semi-simple C*-category, all morphism spaces are finite dimensional, and so a semi-simple category is C* if and only if it is W*.

A \textit{C*-tensor category} is a C*-category equipped with a bilinear functor $\otimes: \mcal{C}\times \mcal{C}\rightarrow \mcal{C}$ together with unitary associativity natural transformations (called the associators) satisfying the pentagon axioms, and a distinguished unit object $\mathbbm{1}\in \mcal{C}$ with unitary unitor natural isomorphisms satisfying the triangle axioms.
By MacLane's strictness theorem we can (and usually do) assume our categories are strict, so that the associators and unitors are all identities.
In particular, this makes it easy to write tensor equations, and apply the usual graphical calculus formalisms.

A C*-tensor category \textit{has duals} if for every object $a\in \mcal{C}$, there exists an object $\overline{a}\in \mcal{C}$ and maps $R\in \mcal{C}(\mathbbm{1},\overline{a}\otimes a)$ and $\overline{R}\in \mathcal{C}(\mathbbm{1}, a\otimes \overline{a})$ satisfying the \textit{duality equations}:

$$(1_a\otimes R^{*})\circ (\overline{R}\otimes 1_{a})=1_{a}\ \text{and}\ (R^{*}\otimes 1_{\overline{a}})\circ (1_{\overline{a}}\otimes \overline{R})=1_{\overline{a}}$$

\begin{defn} A \textit{rigid C*-tensor category} is a semi-simple C*-tensor category with duals and simple tensor unit.
\end{defn}

Our definition of a rigid C*-tensor category is not universal, but is by far the most commonly studied.  

We recall here that for a semi-simple tensor category $\mcal{C}$, the \textit{fusion algebra} is the complex linear span of isomorphism classes of simple objects, with product given by the linear extension of $[a]\cdot [b]:=\sum_{c\in \Irr(\mcal{C})} N^{c}_{ab}[c]$, where $N^{c}_{ab}=\dim\left(\mcal{C}(a\otimes b, c)\right)$.
This is an associative, unital algebra.
When $\mcal{C}$, in addition, is rigid, there is a $*$-structure on this algebra, given by the conjugate linear extension of $[a]^*:=[\overline{a}]$.
This associative $*$-algebra is denoted $\text{Fus}(\mcal{C})$.

We have the following large class of (not mutually exclusive) examples, which indicate their connections with other areas of mathematics and physics:

\begin{enumerate}
\item
The category of bifinite Hilbert space bimodules of an $\rm{II}_1$ factor.
\item
$Rep(\mathbbm{G})$ for $\mathbbm{G}$ a compact (quantum) group.
\item
The DHR category of a covariant net of von Neumann algebras.
\end{enumerate}

The first example is actually universal, in the sense that every (countably generated) rigid C*-tensor category arises as a full subcategory of bimodules of the group von Neumann algebra $L\mathbbm{F}_{\infty}$ (\cite{PS,BHP}).

\subsection{Free product of categories}\label{free prod prel}
In this subsection, we provide the definition of the free product of two semi-simple C*-tensor categories with simple tensor units.
This notion, due to Bisch and Jones, arises from the free composition of finite index subfactors (see \cite{BJ}). It also appears in the study of free products of compact quantum groups \cite{W}.
Our approach to free products closely follows the construction of Bisch and Jones as elaborated by \cite{IMP}, except we do not require duals in our categories.

To proceed with this construction, we will first define a certain C*-category involving the two given categories, and controlled by non-crossing partitions. 
The free product will be the resulting projection category.

Let $ \mcal C_+ $ and $ \mcal C_- $ be two semi-simple C*-categories with simple tensor units $ \mathbbm {1}_+ $ and $ \mathbbm {1}_- $ respectively.
In our construction, we pick a strict model of $\mcal{C}_{\pm}$.
Let $ \Sigma $ be the set of words with letters in $ \t {Obj } \mcal C_+ \cup \t {Obj }  \mcal C_-$.
For $ \sigma \in \Sigma $, the length of $ \sigma $  will be denoted by  $ \abs \sigma $.
To a word $ \sigma \in \Sigma$, we associate the subword (whose letters are not necessarily adjacent) $ \sigma_+ \in \t {Obj }\mcal C_+ $ (resp., $ \sigma_- \in \t {Obj } \mcal C_- $) consisting of all the letters in $ \sigma $ coming from $ \t {Obj } \mcal C_+ $ (resp., $\t {Obj } \mcal C_- $).
The object obtained by tensoring the letters in $ \sigma_\pm $ will be denoted by $ t( \sigma_\pm )$ with the convention $ t(\emptyset) = \mathbbm {1_\pm}$ where appropriate.
For instance, if $ \sigma = a^+_1 a^-_2 a^+_3 a^-_4 a^-_5 $, then $ \sigma_+ =  a^+_1 a^+_3$, $t (\sigma_+) = a^+_1 \otimes a^+_3$, $ \sigma_- = a^-_2 a^-_4 a^-_5 $ and $t(\sigma_-) = a^-_2 \otimes a^-_4 \otimes a^-_5 $.
\begin{defn}
	Let $ \sigma, \tau \in \Sigma $.
A `\textit{$ (\sigma,\tau )$-NCP}' consists of:
\begin{itemize}
\item  a \textit{non-crossing partitioning} of the letters in $ \sigma $ and $ \tau $ arranged at the bottom and on the top edges of a rectangle respectively moving from left to right, such that each partition block consists only of objects from $ \mcal C_+ $ or only of objects $ \mcal C_- $ ,
%\item to every partition $ \{\sigma_0,\tau_0\} $ with the bottom labelled by $\sigma_0$  and the top labelled by $ \tau_0  $, where $ \sigma_0 $ and $ \tau_0 $ are subwords of $ \tau $ and $ \sigma $ respectively, we assign a morphism from $t(\sigma_0)  $ to $ t(\tau_0) $ in the appropriate category.
\item every block gives a pair of (possible empty) subwords of $ \sigma $ and $ \tau $, say, $ (\sigma_0,\tau_0) $, where $ \sigma_0 $ (resp. $ \tau_0 $) consists of letters in the partition coming from $ \sigma $ (resp. $ \tau $).
%Whenever $ \sigma_0 $ or $ \tau_0 $ is the empty subword, we take it to be the tensor unit of the appropriate category.
For each such block, seen as a rectangle with the bottom labeled by $\sigma_0$ and the top labeled by $ \tau_0  $, we choose a morphism from $t(\sigma_0)  $ to $ t(\tau_0) $ in the appropriate category.

\end{itemize}
\end{defn}
We give an example of a $ (\sigma , \tau) $-NCP in Figure \ref{NCPeg} where $ \sigma = a^+_1 a^+_2 a^+_3 a^-_4 a^+_5a^+_6 a^-_7 a^+_8  $ and $ \tau =  b^+_1 b^-_2 b^-_3 b^+_4 b^+_5 $ with $ a_i^\vlon,b_{j}^\vlon \in \mcal C_\vlon$, $ \vlon \in \{+,-\} $.

\begin{figure}[h]
\psfrag{1}{$ a^+_1 $}
\psfrag{2}{$ a^+_2 $}
\psfrag{3}{$ a^+_3 $}
\psfrag{4}{$ a^-_4 $}
\psfrag{5}{$ a^+_5 $}
\psfrag{6}{$ a^+_6 $}
\psfrag{7}{$ a^-_7 $}
\psfrag{8}{$ a^+_8 $}
\psfrag{f}{$ f_1 $}
\psfrag{g}{$ f_2 $}
\psfrag{h}{$ f_3 $}
\psfrag{i}{$ f_4 $}
\psfrag{j}{$ f_5 $}
\psfrag{A}{$ b^+_1 $}
\psfrag{B}{$ b^-_2 $}
\psfrag{C}{$ b^-_3 $}
\psfrag{D}{$ b^+_4 $}
\psfrag{E}{$ b^+_5 $}
\includegraphics[scale=0.3]{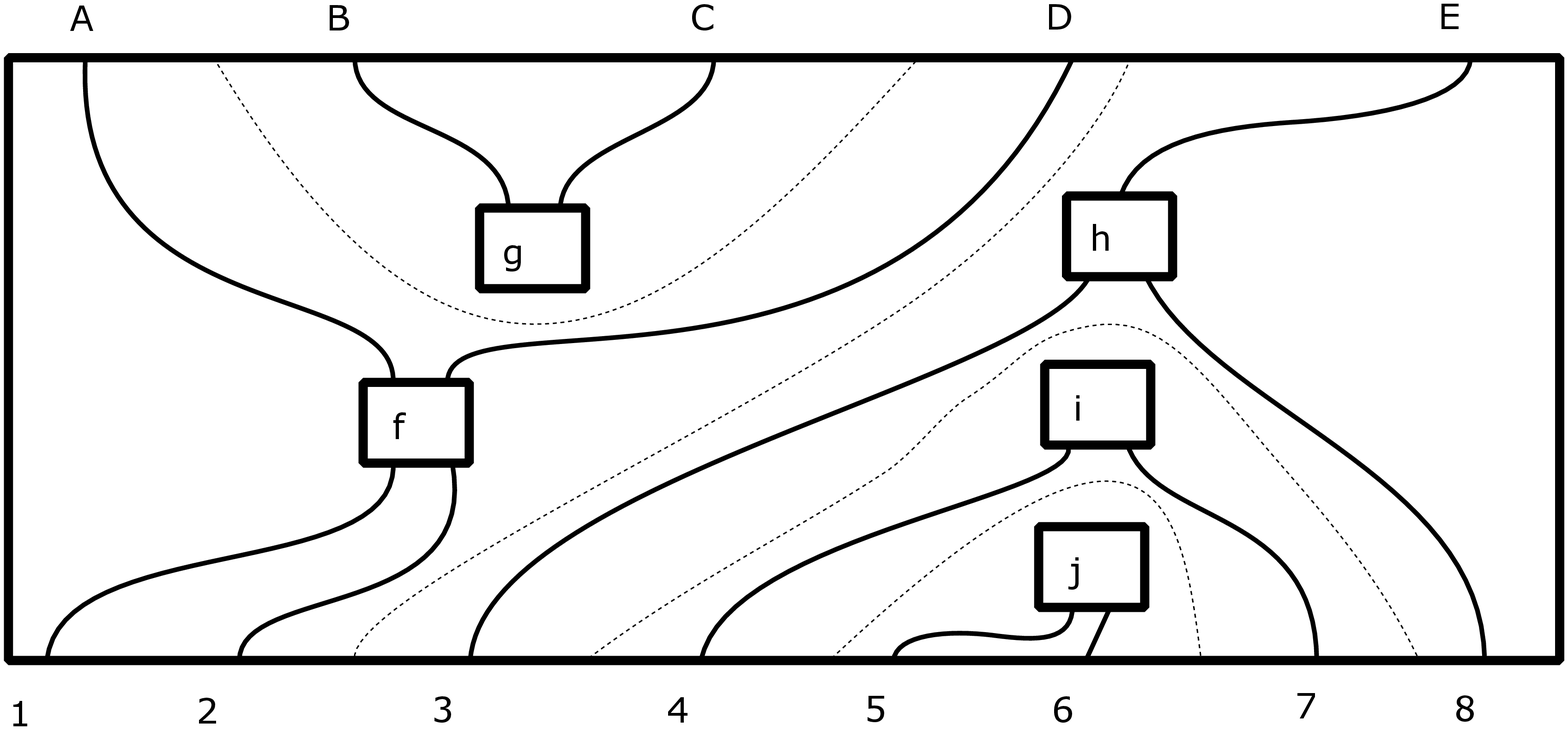}
\caption{}
\label{NCPeg}
\end{figure}
Here, the pair of subwords corresponding to the partition blocks are $ \rho_1 = (a_1^+a_2^+,b_1^+b_4^+), \rho_2=(\emptyset,b_2^-b_3^-),\rho_3=(a_3^+a_8^+,b_5^+),\rho_4=(a_4^-a_7^-,\emptyset),\t{and }\rho_5= (a_5^+a_6^+,\emptyset) $.
Note that each letter of $ \rho_i$ either belongs Obj $ \mcal C_+ $ alone or Obj $ \mcal C_- $ alone, for every $ i $ and each of $ \rho_i $ is assigned a morphism from the corresponding category.
For instance, all the letters of $ \rho_3 $ are objects of $ \mcal C_+ $ and is assigned the morphism $ f_3 \in \mcal C_+(a_3^+\otimes a_8^+,b_5^+) $.
%Each of these partitions, $ \rho_i \subset \t{Obj}(\mcal C_\vlon)$, are assigned by the morphisms $ a_i$ in $ \mcal C_\vlon $ for $ \vlon \in \{+,-\} $. 
\vskip 1em
We denote the set of such $ (\sigma , \tau )$-NCPs by $ NCP(\sigma,\tau) $.
Now, to every $ T \in NCP(\sigma , \tau ) $, we can associate unique $ T_{\pm}\in NCP(\sigma_{\pm} , \tau_{\pm} )$  by deleting all blocks whose letters are labeled by the opposite sign.
%For instance, if $ T $ denotes the NCP in \red{Figure ??}, then $ T_\pm $ is given by:
%\red{Figure pending}

Since all letters in $ \sigma_{\pm} $ and $ \tau_{\pm} $ come from either $ \mcal C_+ $ or $ \mcal C_- $ only, the non-crossing partitions  $T_{\pm} $ give rise to unique morphisms $ Z_{T_\pm}\in \mcal C_\pm \left(t(\sigma_\pm) , t(\tau_\pm) \right) $ using the standard graphical calculus for monoidal categories.

%obtained by tensoring the morphisms which are assigned to the partitions starting left to right.
%Arranging the partitions from left to right causes ambiguity whenever there is a partition which stays fully on the top or at the bottom.
%However, strictness of the categories $ \mcal C_\pm $ yeilds in a unique $ Z_T $ no matter how we sequence the partitions.
So, for any $ \sigma ,\tau \in \Sigma $ and $ T \in NCP (\sigma  , \tau) $, we have morphisms $ Z_{T_\pm} \in \mcal C_\pm \left(t(\sigma_\pm) , t(\tau_\pm) \right) $.
We write $ Z_T\coloneqq Z_{T_+} \otimes Z_{T_-} \in \mcal C_+ \left(t(\sigma_+), t(\tau_+)\right) \otimes \mcal C_- \left(t(\sigma_-), t(\tau_-)\right)$.
For example, for the NCP $ T $ in Figure \ref{NCPeg}, 
$$ Z_{T_+} = (f_1\otimes f_3)\circ (1_{a_1^+\otimes a_2^+\otimes a_3^+}\otimes f_5 \otimes1_{a_8^+})\ \text{and}\ Z_{T_+} = f_2\circ f_4$$

We define the category $ \mcal{NCP} $ as follows:

\begin{itemize}
\item Objects in $\mcal{NCP}$ are given by $\Sigma$.

\bigskip

\item For $ \sigma , \tau \in \Sigma $, the morphism space is defined by
$$\mcal {NCP} (\sigma , \tau) \coloneqq \t {span} \left\{Z_T: T \in NCP(\sigma , \tau) \right\} \subset \mcal C_+ \left(t(\sigma_+), t(\tau_+)\right) \otimes \mcal C_- \left(t(\sigma_-), t(\tau_-)\right).$$

\end{itemize}

Composition of morphisms is given by composing the tensor components, which is obviously bilinear, and associative.
However, one needs to verify whether the morphism spaces of $ \mcal{NCP} $ are closed under such composition.
Let $ S \in NCP (\sigma , \tau) $ and $ T \in NCP (\tau , \kappa) $.
Consider the `composed' rectangle obtained by gluing $ T $ on the top of $ S $ matching along the letters of $ \tau $.
The non-crossing partitions of $ S $ and $ T $ induce a non-crossing partition on the composed rectangle with $ \sigma  $ at the bottom and $ \tau $ on the top; each partition is then labeled by composing the corresponding morphisms in $ S $ and $ T $.
We call this $ T \circ S \in NCP(\sigma , \kappa) $.
In this process of composing two NCPs, we have ignored certain partitions of $ S $ (staying only on its top) and $ T $ (staying only at its bottom) which cancel each other and do not contribute towards the non-crossing partitioning of the composed rectangle.
Since the tensor units $ \mathbbm {1}_\pm $ are assumed to be simple, composing the morphisms associated to these partitions simply yield a scalar.
Suppose $ \lambda (T,S) $ denote the product of all such scalars.
Then, $ \left(Z_{T_+} \circ Z_{S_+}\right) \otimes \left(Z_{T_-} \circ Z_{S_-}\right) = \lambda (T,S) \; Z_{(T\circ S )_+} \otimes Z_{(T\circ S)_-} \in \mcal {NCP} (\sigma , \kappa)$.

Clearly, $ \mcal{NCP} $ is a $ \C $-linear category.
There is also a $ * $-structure given by applying $ * $ on each of the tensor components.
To see whether the morphism spaces of $ \mcal{NCP} $ is closed under $ * $, we define an involution $ \left( NCP(\sigma , \tau) \ni T \os {\displaystyle *} \longmapsto T^* \in NCP(\tau,\sigma) \right)_{\sigma,\tau \in \Sigma} $ where we reflect $ T $ about any horizontal line  to obtain $ T^* $ with a non-crossing partitioning and their corresponding morphisms being induced by the reflection of the initial partitioning and $ * $ of the assigned morphisms in $ T $ respectively.

Indeed, $ Z^*_T = Z_{T^*} \in \mcal{NCP} (\tau , \sigma)  $ for all $ T \in NCP(\sigma , \tau) $.
Thus, $ \mcal{NCP} $ is a $ * $-category.
Note that by construction, $\mcal{NCP}$ is equipped with a canonical faithful $*$-functor to the Deligne tensor product $\mcal{C}_{+}\boxtimes \mcal{C}_{-}$, which sends $\sigma$ to $\sigma_{+}\boxtimes \sigma_{-}\in \mcal{C}_{+}\boxtimes \mcal{C}_{-}$.
Since $\mcal{C}_{\pm}$ are both semi-simple, the Deligne tensor product is again a C*-category with finite dimensional morphism spaces.
But any (not necesarily full) *-subcategory of a C*-category with finite dimensional morphism spaces is easily seen to be C* itself.
Since our canonical functor is faithful, this implies $\mcal{NCP}$ is a C*-category.

For the tensor structure, define $ \sigma \otimes \tau $ as the concatenated word $ \sigma\tau$.
If $ f = \sum \limits_i a_i \otimes b_i \in \mcal{NCP} (\sigma , \tau) \subset \mcal C_+(\sigma_+, \tau_+) \otimes \mcal C_-(\sigma_-, \tau_-) $ and $ g = \sum \limits_j c_j \otimes d_j \in \mcal{NCP} (\kappa , \nu) \subset \mcal C_+(\kappa_+, \nu_+) \otimes \mcal C_-(\kappa_-, \nu_-)$, then $ f \otimes g \coloneqq \sum \limits_{i,j} (a_i \otimes^+ c_j) \otimes (b_i \otimes^- d_j )$ where $ \otimes^\pm $ denote for the tensor functor of $ \mcal C^\pm $.
It is easy to check $ f \otimes g \in \mcal{NCP} (\sigma \otimes \kappa , \tau \otimes \nu) $ and $ (f\otimes g )^* = f^* \otimes g^* $.
This implies $ \mcal{NCP} $ is a C*-tensor category.
Note that $ \mcal C_\pm $ sit inside $ \mcal{NCP} $ as full $ * $-subcategories.

We now define $ \mcal C_+ \ast \mcal C_-  $ to be the projection category of $\mcal{NCP}$.  More explicitly, 

$$\t {Obj } (\mcal C_+ \ast \mcal C_-) := \{(\sigma, p)\ :\ \sigma\in \Sigma\ \text{and}\ p\in \mcal{NCP}(\sigma,\sigma),\ \ p^{2}=p^{*}=p\}$$

 For $ (\sigma, p), (\tau, q)\in \t {Obj } (\mcal C_+ \ast \mcal C_-)  $, the morphism space 

 $$ (\mcal C_+ \ast \mcal C_-  )((\sigma,p),(\tau,q)) \coloneqq q \circ \mcal {NCP} (\sigma,\tau) \circ p$$
The tensor and $ * $-structures are induced by those of $ \mcal {NCP} $ in the obvious way.
It is easy to see that $ \mcal C_+ \ast \mcal C_-   $ is also C*-tensor category.

\begin{defn}
Let $\Irr(\mcal{C}_{\pm})$ denote a choice of object from each isomorphism class of simple objects, such that the tensor units are chosen to represent their isomorphism class.
Then $\Sigma_{0}:= \{\emptyset\} \cup \left\{a^{\vlon_{1}}_{1} \ldots  a^{\vlon_{k}}_{k} \; : \;  k\in \N, \ \vlon_{i} \in \{ \pm \} , \ a^{\vlon_i}_{i}\in \Irr(\mcal{C}_{\vlon_i})\setminus\{\mathbbm{1}_{\vlon_i}\},\ \vlon_{i} = -\vlon_{i+1} \t { for } 1\leq i \leq k \right\} $.
\end{defn}

\begin{prop}
$ \mcal C_+ \ast \mcal C_-  $ is a semi-simple C*-tensor category containing $ \mcal C_\pm $ as full subcategories.
Moreover, there is a canonical bijection between $\Sigma_{0}$ and isomorphism classes of simple objects in $\mcal{C}_{+}*\mcal{C}_{-}$, given by $\sigma \mapsto [ (\sigma, 1_{\sigma})]$.
\end{prop}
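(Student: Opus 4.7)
The plan is to establish the proposition in three main stages.

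Stage 1 concerns the formal structure. $\mcal C_+ \ast \mcal C_-$ inherits a C*-tensor category structure from $\mcal{NCP}$, because the (additive and) idempotent completion of a C*-tensor category is again one. The embedding $\mcal C_\pm \hookrightarrow \mcal C_+ \ast \mcal C_-$ sending $a \mapsto (a, 1_a)$ is full: the only partitions of a two-point set $\{a_{\text{bot}}, b_{\text{top}}\}$ for $a, b$ in a single $\mcal C_\pm$ are the pair block (contributing $\mcal C_\pm(a, b)$ directly) and the two singletons (contributing morphisms factoring through $\mathbbm{1}_\pm$, a subspace of $\mcal C_\pm(a, b)$), so $\mcal{NCP}(a, b) = \mcal C_\pm(a, b)$.

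Stage 2 is the combinatorial crux: for $\sigma, \tau \in \Sigma_0$,
$$\mcal{NCP}(\sigma, \tau) = \C \cdot 1_\sigma \text{ if } \sigma = \tau, \qquad \mcal{NCP}(\sigma, \tau) = 0 \text{ if } \sigma \neq \tau.$$
Take $T \in NCP(\sigma, \tau)$ with $Z_T \neq 0$ and argue as follows. First, $T$ has no singleton block, since a singleton $\{x\}$ would require a non-zero morphism in $\mcal C_\pm(x, \mathbbm{1}_\pm)$ or $\mcal C_\pm(\mathbbm{1}_\pm, x)$ for a non-unit simple $x$, which vanishes. Second, $T$ has no same-side block with more than one letter: supposing otherwise, pick a same-side bottom cap joining positions $i < j$ of $\sigma$ with $j - i$ minimal. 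Alternation of $\sigma$ produces an interior position $k$ with opposite sign whose block is confined by non-crossing to the arc below the cap; this arc contains only bottom letters, so the block is either a singleton (contradicting the no-singleton step) or a strictly smaller same-side bottom block (contradicting minimality). Consequently every block is a through-string with one bottom and one top letter of matching sign; for the block morphism to be non-zero these letters must be isomorphic simples; and a non-crossing collection of through-strings between two horizontal edges of a rectangle preserves left-to-right order. Hence $\sigma = \tau$ and $T$ is the identity NCP, giving $Z_T \in \C \cdot 1_\sigma$. In particular each $(\sigma, 1_\sigma)$ with $\sigma \in \Sigma_0$ is simple with endomorphism algebra $\C$, and distinct words in $\Sigma_0$ give non-isomorphic simples.

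Stage 3 is exhaustion. An arbitrary word $\tau \in \Sigma$ reduces to $\Sigma_0$ by two local moves realized inside $\mcal{NCP}$: (i) a letter equal to $\mathbbm{1}_\pm$ can be deleted via an NCP in which that letter appears as a singleton carrying the identity morphism $1_{\mathbbm{1}_\pm}$, producing a canonical isomorphism to the shorter word; (ii) two adjacent same-sign letters $a^{\vlon} b^{\vlon}$ can be decomposed using $a \otimes b = \bigoplus_{c \in \Irr(\mcal C_\vlon)} c^{\oplus N^c_{ab}}$, which yields an orthogonal family of projections in $\mcal C_\vlon(a \otimes b, a \otimes b) \subset \mcal{NCP}(\tau, \tau)$, each splitting in $\mcal C_+ \ast \mcal C_-$ to a subobject of the form $(\tau', 1_{\tau'})$ with $\tau'$ strictly shorter. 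Iteration terminates and expresses every $(\tau, 1_\tau)$ as a direct sum of $(\sigma, 1_\sigma)$ with $\sigma \in \Sigma_0$; an arbitrary projection $p \in \mcal{NCP}(\tau, \tau)$ then decomposes further through the resulting semi-simple endomorphism algebra. Together with Stage 2 this yields semi-simplicity of $\mcal C_+ \ast \mcal C_-$ and the claimed bijection $\sigma \mapsto [(\sigma, 1_\sigma)]$.

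The main technical obstacle is the minimality argument in Stage 2. Once the innermost same-side cap is isolated, alternation together with non-crossing forces an interior letter to land in a singleton (since the arc below the cap hosts no top letters that could serve as through-string partners), but the bookkeeping of positions, signs, and the recursive descent through nested same-side blocks warrants care.
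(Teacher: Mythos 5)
Your Stages 1--3 track the paper's own argument closely: the paper likewise rules out same-side blocks via alternation plus the vanishing of morphisms $\mcal C_\pm(x,\mathbbm{1}_\pm)$ for non-unit simples $x$, concludes that every block of a nonzero $(\sigma,\sigma)$-NCP with $\sigma\in\Sigma_0$ is a vertical through-string, and then reduces an arbitrary word to $\Sigma_0$ by exactly your two local moves (deleting unit letters, splitting adjacent same-sign letters via decomposition isometries). So the irreducibility, mutual non-isomorphism, and exhaustion parts are essentially the paper's proof.

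There is, however, a genuine gap in Stage 1. The category $\mcal C_+\ast\mcal C_-$ is defined as the \emph{projection category} (idempotent completion) of $\mcal{NCP}$ only --- not the additive-and-idempotent completion --- and an idempotent completion of a C*-category need not admit direct sums. Since the paper's definition of semi-simplicity explicitly requires the existence of (unitary) direct sums, your parenthetical appeal to ``the (additive and) idempotent completion'' either changes the category being discussed or silently assumes the very point that needs proof. Your Stage 3 only decomposes a \emph{given} object into subobjects (projections already present in its endomorphism algebra); it does not produce, for two objects $(\sigma,1_\sigma)$ and $(\tau,1_\tau)$ with $\sigma,\tau\in\Sigma_0$, an ambient object containing both as orthogonal summands. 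The paper supplies this with a specific construction: replace each letter $a_i$ of $\sigma$ by $\widehat a_i:=a_i\oplus\mathbbm{1}$ and each letter $b_j$ of $\tau$ by $\widehat b_j:=b_j\oplus\mathbbm{1}$, form $\gamma:=\widehat\sigma\,\widehat\tau$, and exhibit orthogonal isometries $u\in\mcal{NCP}(\sigma',\gamma)$, $v\in\mcal{NCP}(\tau',\gamma)$ (with $\sigma'\cong\sigma$, $\tau'\cong\tau$ obtained by padding with unit letters) so that $(\gamma,uu^*+vv^*)$ realizes $(\sigma,1_\sigma)\oplus(\tau,1_\tau)$. Without this step, or an equivalent one, semi-simplicity is not established. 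A secondary, smaller point: your elimination of ``same-side blocks'' only addresses blocks lying entirely on one edge; to conclude that every block is a single through-string you must also exclude blocks with, say, two bottom letters and one top letter, though the same trapped-interior-letter argument handles this (and the paper is equally terse here).
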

\begin{proof}
First we show that the objects $\sigma\in \Sigma_{0}$ form a distinct set of irreducible objects in $\mcal{NCP}$.

Let $ \sigma \in \Sigma_0 $ be a nonempty word, and $ T $ a $ (\sigma,\sigma) $-NCP.
If $ T $ has a block which connects only letters on the top or only letters on bottom, then $ T $ necessarily also has a singleton block and its associated morphism turns out to be zero (since $ \sigma \in \Sigma_0 $ is non-empty and the tensor units $ \mathbbm{1}_\pm $ are simple) which implies $ Z_T = 0 $.
Thus every partition in $ T $ consists of letters in the top as well as bottom.
Since the letters in $ \sigma $ come alternatively from $ \mcal C_+ $ and $ \mcal C_- $, and the partitions are non-crossing, the partition blocks should be of the form $ (a^{bottom}_1, a^{top}_1) $, $ (a^{bottom}_2, a^{top}_2) , \ldots $, where $ \sigma = a_{1}a_{2}\dots $.
The assigned morphisms of these blocks are then scalars since $ a_i $'s are simple.
This says that $ Z_T $ has to be a scalar multiple of $ 1_\sigma $.
Hence, $ \mcal{NCP(\sigma,\sigma)} $ is one-dimensional implying $ \sigma $ is simple for all $ \sigma \in \Sigma_0 $.
Similar arguments will tell us that $ \mcal{NCP(\sigma , \tau)} $ is zero for two distinct $ \sigma , \tau \in \Sigma_0 $.

We now show $\Sigma_{0}$ is complete, in the sense that any object $\sigma\in \mcal{NCP}$ is isomorphic to a direct sum of objects from $\Sigma_{0}$.
Observe that if $ \sigma_1, \ldots , \sigma_n \in \Sigma$ such that the letters in each $ \sigma_i $ come from $ \mcal C_+ $ alone or $ \mcal C_- $ alone, then $ \sigma_1 \ldots  \sigma_n$ is isomorphic to the word $ t(\sigma_1) \ldots  t(\sigma_n) $.  Moreover, a quick sketch of non-crossing partitions shows that $ \sigma  \mathbbm{1}_\pm  \tau \cong \sigma \tau $.
It is also easy to see that if $a\cong b_{1}\oplus b_{2}$ in $\mcal{C}_{\pm}$ via decomposition isometries $v_i\in \mcal{C}_{\pm}(b_{i}, a)$ , then the word $ \sigma a \tau \cong \sigma b_{1} \tau  \bigoplus \sigma b_{2} \tau  $ via decomposition isometries given by the $(\sigma b_{1} \tau, \sigma a \tau)$-non-crossing partitions $T_{i}$ defined as follows:  The underlying non-crossing partition has pairings which connect elements vertically, and for each block ending in $\sigma$ or $\tau$, we have the identity morphism, while the block connecting $b_{i}$ with $a$ is assigned the isometry $v_{i}$.  Taken together, these observations imply that any object can be decomposed as a finite direct sum of words in $ \Sigma_0 $.

\vskip 1em
$ \mcal C_+\ast \mcal C_- $ inherits all the above properties from $ \mcal{NCP} $.
In particular, since every object $\sigma\in \mcal{NCP}$ is isomorphic to a direct sum of simple objects in $\Sigma_{0}$, this will be true for any subobject.
Hence in the projection category, every object $(\tau, p)$ is isomorphic to a direct sum of objects of the form $(\sigma, 1_{\sigma})$ for $\sigma\in \Sigma_{0}$.

Thus to show that $\mcal C_+\ast \mcal C_-$ has direct sums,
it suffices to show that for $\sigma,\tau\in \Sigma_{0}$, there exists an object $(\sigma, 1_{\sigma})\oplus (\tau, 1_{\tau})\in \mcal C_+\ast \mcal C_- $ satisfying direct condition.

%We need to see $ p\oplus q $ as a projection in $ \mcal{NCP}(\gamma,\gamma) $ for some $ \

Let $ \alpha_i $ and $ \vlon_j $ be the signs given by $ a_i \in \mcal C_{\alpha_i} $ and $ b_j \in \mcal C_{\vlon_j}$.
Consider
$\widehat a_i \coloneqq a_i \oplus \mathbbm{1}_{\alpha_i} \text { implemented by the isometries }\ u_i \in \mcal C_{\alpha_i} (a_i , \widehat a_i) \text{ and }\ e_i \in \mcal C_{\alpha_i} (\mathbbm{1}_{\alpha_i } , \widehat a_i) $. Similarly, pick $\widehat b_j \coloneqq b j \oplus \mathbbm{1}_{\vlon_j} \text { and implementing isometries }\ v_j \in \mcal C_{\vlon_j} (b_j , \widehat b_j) \text { and }\ f_j \in \mcal C_{\vlon_j} (\mathbbm{1}_{\vlon_j} , \widehat b_j).$
Set 

$$
\left\{
\begin{tabular}{l}
$\widehat \sigma \coloneqq \widehat a_1  \ldots  \widehat a_m\ \text{and}$\ 
 $\sigma^{\prime} \coloneqq a_1 \ldots  a_m \mathbbm 1_{\vlon_1} \ldots \mathbbm 1_{\vlon_n}$,\\
  $\widehat \tau \coloneqq \widehat b_1  \ldots \widehat b_n\ \text{and}\ \tau^{\prime} \coloneqq \mathbbm 1_{\alpha_1} \ldots \mathbbm 1_{\alpha_m} b_1 \ldots b_n $ ,\\
 $\gamma \coloneqq \widehat \sigma  \widehat \tau $.
\end{tabular}
\right.
$$

We have already seen that $  \sigma^{\prime} \cong \sigma $ and $  \tau^{\prime} \cong \tau $ in $ \mcal{NCP} $.
Consider the isometries $ u \coloneqq u_1 \otimes \cdots \otimes u_m \otimes  \mathbbm{1}_{\vlon_1} \otimes \cdots \otimes \mathbbm{1}_{\vlon_n} \in \mcal{NCP}(\sigma^{\prime} , \gamma)$ and $ v \coloneqq \mathbbm{1}_{\alpha_1} \otimes \cdots \otimes \mathbbm{1}_{\alpha_m} \otimes v_1 \otimes \cdots \otimes  v_n \in \mcal{NCP} (\tau^{\prime}, \gamma)$.
Note that projections $ uu^* $ and $ vv^* $ are mutually orthogonal in $ \mcal NCP (\gamma, \gamma) $ (since $ (u_i u^*_i \;, \;e_i e^*_i )$ and $ (v_j v^*_j\; ,\; f_j f^*_j) $ are pairs of mutually orthogonal projections).
So, we have a projection in $\mcal {NCP} (\gamma , \gamma) $, namely $(uu^* + vv^*) \cong 1_{\sigma^{\prime}} \oplus 1_{ \tau^{\prime}} \cong 1_\sigma \oplus 1_\tau$ in $ \mcal C_+ \ast \mcal C_-$.
%In particular, $ p \oplus q \cong Z_upZ_u^* + Z_vqZ_v^* \in \mcal{NCP(\gamma,\gamma)} $ and we are done with the proof.
%$ u \coloneqq u_1 \otimes \cdots \otimes u_m \otimes f_1 \otimes \cdots \otimes f_n $ and  $ v \coloneqq e_1 \otimes \cdots \otimes e_m \otimes v_1 \otimes \cdots \otimes v_n $.
%Since the morphism spaces of $ \mcal{NCP} $ are finite dimensional, the endomorphism spaces become finite dimensional $ C^* $-algebras.
%So, any projection can be expressed as a finite sum of mutually orthogonal minimal projections of the corresponding endomorphism space of $ \mcal C_+ \ast \mcal C_- $.
%Now, minimal projections of these endomorphism spaces are precisely the simple objects.
%Next, observe that if $ \sigma_1, \ldots , \sigma_n \in \Sigma$ such that the letters in each $ \sigma_i $ come from $ \mcal C_+ $ alone or $ \mcal C_- $ alone, then $ (\sigma_1, \ldots , \sigma_n) $ is isomorphic to $ (t(\sigma_1) , \ldots , t(\sigma_n) ) $ in $ \mcal {NCP} $; moreover, if $ X \cong Y \bigoplus Z $ in $ \mcal C_\pm $, then $ (\sigma, X, \tau) \cong (\sigma,Y,\tau) \bigoplus (\sigma,X,\tau) $ in $ \mcal {NCP} $.
%This implies that every 
\end{proof}

\subsection{Annular representations}
Now we recall from \cite{GJ}, the definition and basic properties of \textit{annular algebras}, and their \textit{representation categories} associated to a rigid C*-tensor category $\mcal{C}$.
For a simple object $a\in \mcal{C}$ and an arbitrary object $b\in \mcal{C}$, we naturally have an inner product on $ \mcal{C}(a,b) $ given by $ g^*f=\langle f,g \rangle 1_a $.
Let $ \Irr(\mcal C)$ denote a set of representatives of isomorphism classes of simple objects in $ \mcal C $.
We assume that $ \mathbbm{1} \in \Irr(\mcal C) $ is chosen to represent its isomorphism class.
Let $ \Lambda $ be any subset of the set representatives of isomorphism classes of all objects in $ \mcal C $.
Then the \textit{annular algebra with weight set $ \Lambda $} is defined as a vector space

$$ \mcal A\Lambda:= \bigoplus\limits_{b,c\in \Lambda, a\in \t{Irr}(\mcal C)} \mcal{C}(a\otimes b , c\otimes a )$$.

For $ f\in \mcal{C}(a_1 \otimes b_1 ,b_2 \otimes a_1 ) $ and $ g\in \mcal C(a_2 \otimes b_3 ,b_4 \otimes a_2 ) $, multiplication in $ \mcal A\Lambda $ is given by 
\[
f \cdot g \; := \; \delta_{b_1=b_4} \; \sum_{c\in \Lambda} \; \sum_{u \in \t {onb} (\mcal{C} (c,a_1\otimes a_2 ))} (1_{b_2} \otimes u^{*}  ) (f \otimes 1_{b_2}  ) (1_{a_1} \otimes g  ) (u \otimes 1_{b_3}  )
\]
where $ onb $ denotes an orthonormal basis with respect to the inner product defined above.
This multiplication is associative and is independent of choice of representatives of isomorphism classes of simple objects and choice of $ onb $ in consideration.
$ \mcal A\Lambda $ has a $ * $-structure, which we denote by $ \# $, defined by
\[
f^\#:=(R_a^* \otimes 1_{b_1} \otimes 1_{\bar{a}} )(1_{\bar{a}}\otimes f^* \otimes 1_{\bar{a}})(1_{\bar{a}} \otimes 1_{b_2} \otimes \bar{R}_a )
\]
for $ f \in \mcal C(a \otimes b_1, b_2 \otimes a ) $.
The associative $ * $-algebra $ \mcal A\Lambda $ is unital if and only if $\Irr(\mcal{C})<\infty$.
This algebra has a canonical trace defined by $ \Omega(f):= \delta_{b=c} \; \delta_{a=\mathbbm{1}} \; tr(f) $ for all $ f \in \mcal C(a \otimes b, c \otimes a ) $, where $ tr $ is the unnormalized categorical trace on $ \mcal{C}(b,b) $, $tr(f):=R^{*}_{b}(1_{\overline{b}}\otimes f) R_{b}=\overline{R}^{*}_{b}(f\otimes 1_{b}) \bar{R}_{b}$.

We denote the subspaces 

$$ \mcal A\Lambda_{b,c}^{a}:= \mcal{C}(a\otimes b, c\otimes a) \subset \mcal A\Lambda\ \text{and}\ \mcal A\Lambda_{b,c}:= \bigoplus\limits_{a\in \Irr(\mcal C)} \mcal A\Lambda_{b,c}^a \subseteq \mcal{A}\Lambda$$

The associative $*$-algebra $\mcal A\Lambda _{b,b} $ is called the \textit{weight $ b $ centralizer algebra}.
We call $ \mcal A\Lambda_{\mathbbm{1},\mathbbm{1}} $ the \textit{weight $ 0 $ centralizer algebra}, primarily for historical reasons in connection with planar algebras.  It turns out that the fusion algebra of $ \mcal C $,  $ \t{Fus}(\mcal C) $, is $ * $-isomorphic to $ \mcal A\Lambda_{\mathbbm{1},\mathbbm{1}} $ (See \cite[Proposition 3.1]{GJ}).

The \textit{annular category with weight set $ \Lambda $} is the category with objects space as $ \Lambda $ and the morphism space from $ b $ to $ c $ as $ \mcal A\Lambda_{b,c} $.
Composition is given by the multiplication defined above.
Both the algebra as well as category are often denoted by $ \mcal A\Lambda $.
Since both of these essentially contain the same information, they are used interchangeably.

The \textit{tube algebra}, $ \mcal A \mcal C $ is (by a slight abuse of notation) the annular algebra the weight set $ \Irr(\mcal C) $.
This algebra was first introduced by Ocneanu (\cite{O}).
A weight set $ \Lambda \subseteq $ is said to be \textit{full} if every simple object is equivalent to subobject of some  $b\in \Lambda $.
By \cite [Proposition 3.5]{GJ}, any annular algebra with full weight set is strongly Morita equivalent to the tube algebra.

The representation category $ Rep(\mcal A\Lambda) $ is the category of non-degenerate $ * $-representations of $ \mcal A\Lambda $ as bounded operators on a Hilbert space, with bounded intertwiners as morphisms.
This is a W*-category.
By our above comments, whenever $ \Lambda $ is full, we have $ Rep(\mcal A\Lambda) \cong Rep(\mcal A)$ as W*-categories, and thus it makes sense to talk about \textit{the category} of annular representations, which can be realized as the representation category of any annular algebra with full weight set.
We shall see in Section \ref{ann rep free prod} that the weight set can further be reduced in some cases without affecting the resulting category of annular representations.  

One of the reasons these categories are nice is that the tube algebra (or any full annular algebra) admits a universal C*-algebra, $C^{*}(\mcal{A}\Lambda)$, such that $ Rep(\mcal A\Lambda)\cong Rep(C^{*}(\mcal{A}\Lambda))$, where the latter is the category of non-degenerate, continuous $ * $-homomorphisms from the C*-algebra $C^{*}(\mcal{A}\Lambda)$ to $\mcal B(\mcal H)$.
For example, this tells us that the category decomposes as a direct integral of factor representations.
  
One way to access this category is to understand the representation theory of the unital centralizer algebras $\mcal{A}\Lambda_{a,a}$.
If $ a \in \Lambda $, a linear functional $ \phi:\mcal A\Lambda_{a,a}\rightarrow \C $ with $\phi(1)=1$ is said to be \textit{weight a annular state}, or an \textit{admissible state}, if $ \phi(f^\#\cdot f)\geq 0 $ for every $ f\in \mcal{A}\Lambda_{a,b} $ and $b \in \Lambda  $.

Using a GNS construction, each annular state gives a non-degenerate representation of $ \mcal A \Lambda$ (see \cite [Section 4]{GJ}).
Annular states provide a useful way of constructing representations of whole algebra by looking at representations of much smaller centralizer algebras or even subalgebras of the tube algebra.
A representation $ (\pi,\mcal H) $ of a centralizer algebra $ \mcal{A}\Lambda_{a,a} $ is said to be \textit{admissible} if there exists a representation $( \wt\pi,\wt {\mcal H} )$ of $ \mcal{A}\Lambda $ such that $ (\wt\pi, \wt {\mcal H})\big|_{\mcal{A}\Lambda_{a,a}} $ is unitarily equivalent to $ (\pi,\mcal H) $.
There are several equivalent conditions for a representation of centralizer algebra to be admissible.
One such condition is that every vector state in $ (\pi,\mcal H) $ is an annular (i.e. admissible) state.
It turns out that we can construct a universal C*-algebra $ C^*_u(\mcal A_{a,a}) $ with respect to admissible representations, so that admissibility of $ (\pi,\mcal H) $ is equivalent to saying that $ (\pi,\mcal H) $ extends to a representation of $ C^*_u(\mcal A_{a,a}) $.
This algebra is a corner of the universal C*-algebra of the entire tube algebra, so all the pieces fit together nicely.

\section{Annular algebra of free product of categories} \label{ann rep free prod}

We will characterize the annular algebra of $ \mcal C* \mcal D $ where $ \mcal C $ and $ \mcal D $ are rigid, semi-simple C*-tensor categories with simple unit objects.  We note that while providing definitions of the free product $\mcal{C}_{\pm}$ was more convenient to distinguish the two categories, while in this section, using $\mcal{C}$ and $\mcal{D}$ seems better.
By \cite{GJ}, the annular representation category can be obtained from representations of any annular algebra with respect to any full weight set in Obj($\mcal C* \mcal D $) (in particular, a set of representatives of the isomorphism classes of simple objects).

However, in our case, we can actually work with a smaller, non-full weight set, and still capture the entire category. 
To describe this weight set, let $ \textbf{I}_{\mcal C} $ (respectively $ \textbf{I}_{\mcal D} $) be a set of representatives of the isomorphism classes of simple objects in $ \mcal C $ (respectively $ \mcal D $) \textit{excluding the isomorphism class of the unit object}.  
Recall that the set of words (including the empty one) with letters coming alternatively from $ \textbf{I}_{\mcal C}  $ and $ \textbf{I}_{\mcal D} $ is in bijective correspondence with the set of isomorphism classes of simple objects $\Irr(\mcal{C} * \mcal{D})$, where the empty word corresponds to the tensor unit in $\mcal{C}*\mathcal{D}$.
We define $\textbf{W}$ to be the subset of these words with \textit{strictly positive and even} length, such that the first letter comes from $\textbf{I}_{\mcal C}$.
We will say a positive length word is a $\mathcal{C}\t{-}\mathcal{D}$ word if it starts with a letter of $\mcal C$ and ends with a letter of $\mcal D$, and extend this terminology in the obvious way.
%Elements of $ \mcal I_c $, $ \mcal I_d $ and words with letters in $ \mcal I_c \sqcup \mcal I_d $ (in particular, $ \Gamma$) can be identified with the corresponding simple objects in $\mcal E$ \red{as described in the prelim}.
We define the weight set $ \Lambda := \{\emptyset\} \cup \textbf{I}_{\mcal C}  \cup \textbf{I}_{\mcal D}  \cup \W$, which we note is not full.  Indeed, the alternating words of odd length and the alternating words of even length starting with a letter from $\ID$ do not appear in $\Lambda$.  Nevertheless, we have the following result:
%With a slight deviation from our earlier notation, in this section, we use lower italics (\textit{viz.,} $ v,w $ etc.) to represent elements of $ \wt\Lambda $.
%By an empty word $ \emptyset $ we mean the tensor unit $ \mathds{1} $.
%It should be understood from the context whether it represents elements of $ W $(or $W_1 $ or $ W_2$) or $ \wt\Lambda $.

\begin{lem}\label{red wt}
$Rep(\mcal A \Lambda)$ and the representation category of the tube algebra $\mcal A$ of $ \mcal C \ast \mcal D $, are unitarily equivalent as linear $ * $-categories.
\end{lem}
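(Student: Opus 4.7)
The plan is to deduce the lemma from a Morita equivalence between the $*$-algebras $\mcal A \Lambda$ and $\mcal A$. Note that $\mcal A \Lambda = e_\Lambda \mcal A e_\Lambda$ where $e_\Lambda := \sum_{a \in \Lambda} 1_a$ is a self-adjoint idempotent in the multiplier algebra of the tube algebra, so standard Morita theory reduces the desired equivalence of representation categories to the statement that $e_\Lambda$ is a \emph{full} idempotent in $\mcal A$, that is, $\mcal A e_\Lambda \mcal A = \mcal A$. Equivalently, for every simple $W \in \Irr(\mcal C * \mcal D) \setminus \Lambda$, the identity $1_W \in \mcal A^{\mathbbm{1}}_{W,W}$ must lie in the two-sided ideal generated by $\{e_a : a \in \Lambda\}$; by Frobenius reciprocity for the rigid category $\mcal C * \mcal D$, a sufficient condition is the existence of $a \in \Lambda$ and $c \in \Irr(\mcal C * \mcal D)$ together with an embedding $a \hookrightarrow \bar c \otimes W \otimes c$, for this produces a nonzero element $f \in \mcal A^c_{a,W}$ whose conjugate product $f \cdot f^\#$ has a positive multiple of $1_W$ as its $\mathbbm{1}$-component in $\mcal A_{W,W}$.

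The simple objects missing from $\Lambda$ split into (i) even-length $\mcal D$-$\mcal C$ words, and (ii) odd-length alternating words of length at least $3$. For case (i), given $W = w_1 w_2 \cdots w_{2k}$ with $w_1 \in \textbf{I}_{\mcal D}$, I would take $c = w_1$; the unit summand of $\bar w_1 \otimes w_1$ inside $\mcal D$ combined with the concatenation $w_{2k} \cdot w_1$ (legitimate because $w_{2k} \in \mcal C$ and $w_1 \in \mcal D$ lie in different categories) places $a := w_2 w_3 \cdots w_{2k} w_1$, an even-length $\mcal C$-$\mcal D$ word and hence in $\W \subset \Lambda$, as a sub-object of $\bar c \otimes W \otimes c$. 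For case (ii), I would iterate a similar rotation-and-cancellation argument: choose $c$ to be a prefix of $W$ of appropriate length and exploit the unit summand in the successive same-category mergings $\bar w_i \otimes w_i$ on the left together with the matching mergings $w_{2k-i} \otimes w_i$ at the right-hand join, so that $c$ and $\bar c$ strip away the boundary letters of $W$; each iteration shortens the surviving inner word by $2$, until one is left with a word of length $\leq 2$ (a single letter or $\mathbbm{1}$) or an even $\mcal C$-$\mcal D$ word in $\W$, all of which lie in $\Lambda$.

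The main obstacle will be handling case (ii) uniformly, especially in degenerate scenarios where the middle tensor products in the iterative peeling decompose into only the unit object (as can happen along chains of mutually dual invertible simples), causing the pure unit-summand strategy to terminate in an odd-length word still outside $\Lambda$. The fix is to allow enough flexibility in the choice of $c$ and, when needed, to select a non-trivial simple summand in $\textbf{I}_{\mcal C}$ or $\textbf{I}_{\mcal D}$ at some intermediate merging so that the resulting word lands with even length in $\W$. The non-crossing partition description of morphisms in $\mcal C * \mcal D$ from Section \ref{free prod prel} provides both the combinatorial framework for making these choices available in every case and the tools for the scalar computation confirming that $f \cdot f^\#$ indeed contributes a positive multiple of $1_W$ to the $\mathbbm{1}$-component, completing the argument that $1_W \in \mcal A e_\Lambda \mcal A$.
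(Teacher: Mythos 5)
Your overall strategy --- reduce to showing that $e_{\Lambda}=\sum_{a\in\Lambda}1_{a}$ is full in $\mcal{A}$, using rotations to move weights outside $\Lambda$ back into $\Lambda$ --- is close in spirit to the paper's proof, and your combinatorial core (a unitary rotation for even-length $\mcal{D}$-$\mcal{C}$ words; rotation plus decomposition for odd-length words) is essentially the same mechanism the paper uses. But your stated sufficient condition for fullness has a genuine gap. From a single nonzero $f\in\mcal{A}^{c}_{a,W}$ with $a\in\Lambda$ you get $f\cdot f^{\#}\in\mcal{A}e_{\Lambda}\mcal{A}$, and it is true (by faithfulness of the canonical trace) that its $\mathbbm{1}$-graded component is a strictly positive multiple of $1_{W}$; however, $f\cdot f^{\#}$ also has components in $\mcal{A}^{v}_{W,W}$ for $v\ne\mathbbm{1}$, and the two-sided ideal $\mcal{A}e_{\Lambda}\mcal{A}$ is \emph{not} graded over $\Irr(\mcal{C}*\mcal{D})$, so you cannot project onto the $\mathbbm{1}$-component and conclude $1_{W}\in\mcal{A}e_{\Lambda}\mcal{A}$. (Already in $\C[\Z]$ the ideal generated by $(1-g)(1-g^{-1})$ is proper even though that element has nonzero constant term.) What is actually needed, and what the paper produces, is an \emph{exact} resolution $1_{W}=\sum_{i}x_{i}\cdot x_{i}^{\#}$ with $x_{i}\in\mcal{A}_{u_{i},W}$, $u_{i}\in\Lambda$: for even $\mcal{D}$-$\mcal{C}$ words a single unitary rotation suffices (your case (i) is fine for this reason), while for odd-length words one rotates to a non-simple object $w'$ and sums over a complete family of isometries $p_{i}$ with $\sum_{i}p_{i}p_{i}^{*}=1_{w'}$, checking that \emph{all} simple summands $u_{i}$ land in $\Lambda$. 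Your ``iterative peeling'' in case (ii), which selects individual summands one at a time, does not assemble into such a resolution as described.

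The second issue is the appeal to ``standard Morita theory.'' The algebras here are non-unital $*$-algebras that are not C*-algebras, and the categories in question are categories of non-degenerate Hilbert-space $*$-representations with bounded intertwiners. Algebraic fullness of $e_{\Lambda}$ gives an equivalence of \emph{module} categories, but to induce a $*$-representation of $\mcal{A}\Lambda$ up to $\mcal{A}$ one must verify that the sesquilinear form $\langle y_{1}\otimes\xi_{1},y_{2}\otimes\xi_{2}\rangle=\langle\pi(y_{2}^{\#}\cdot y_{1})\xi_{1},\xi_{2}\rangle$ is positive semi-definite and that the resulting action is bounded; this is not automatic, because for $y_{i}\in\mcal{A}_{v_{i},w}$ with $w\notin\Lambda$ the matrix $\left(y_{j}^{\#}\cdot y_{i}\right)_{i,j}$ is not visibly of the form $Y^{\#}Y$ with $Y$ over $\mcal{A}\Lambda$. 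This positivity check is where the paper spends most of its effort, and it is precisely the strong form of fullness $1_{W}=\sum_{i}x_{i}\cdot x_{i}^{\#}$ that rescues it, via $y_{i}=\sum_{s}x_{s}\cdot(x_{s}^{\#}\cdot y_{i})$ with $x_{s}^{\#}\cdot y_{i}\in\mcal{A}\Lambda$. One also cannot shortcut this by citing strong Morita equivalence of full corners of C*-algebras, since identifying $e_{\Lambda}C^{*}(\mcal{A})e_{\Lambda}$ with the universal C*-algebra of $\mcal{A}\Lambda$ is essentially equivalent to the lemma itself. So the skeleton of your argument is right, but both the fullness claim and the passage from fullness to unitary equivalence of representation categories need the sharper statements above to close the proof.
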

\begin{proof}
Clearly, the restriction functor $Res: Rep(\mcal A)\longrightarrow Rep(\mcal A \Lambda)$ is a linear $*$-functor.
We begin by showing that $Res$ is essentially surjective.

Given a representation $ (\pi,V) $ of $\mcal A \Lambda $ and $w\in \Irr(\mcal{C}*\mcal{D})$, we consider the vector space $\bigoplus\limits_{v \in \Lambda} \{{\mcal A}_{v,w} \bigotimes V_v\}$.
We define a sesquilinear form $\langle \cdot, \cdot \rangle$ on this vector space by $\langle y_1\otimes \xi_1,y_2\otimes \xi_2 \rangle_w := \langle \pi(y_2^{\#}\cdot y_1)\xi_1,\xi_2 \rangle _{v_2}$, where $y_{i}\in {\mcal A}_{v_{i},w}$ and $\xi_{i}\in V_{v_i}$.

We first want to show that $\langle x , x\rangle_w \geq 0$ for any vector $ x= \sum \limits^n_{i=1} y_i \otimes \xi_i$.
But we have $\langle x,x\rangle_w = \left\langle T \xi , \xi \right\rangle$, where $T =  \left(\pi(y^{\#}_i\cdot y_j)\right)_{i,j}: \bigoplus \limits^n_{i=1} V_{v_i} \ra \bigoplus \limits^n_{i=1} V_{v_i}$, and $\xi = (\xi_i)_i\in  \bigoplus \limits^n_{i=1} V_{v_i} $.

If $ w \in \Lambda $, then $T$ is clearly a positive operator and hence we have non-negativity of $\langle x , x\rangle_w$.
Suppose now that $ w $ has even length and its first letter is in $ \ID $, say $ w = d_1 c_1 d_2 c_2 \ldots d_k c_k $.
Consider the word $ w' = c_1 d_2 c_2 \ldots d_k c_k d_1  \in \Lambda$.  Let $ \rho \in \mcal A_{w^{\prime},w} $ be the canonical rotation unitary.
Then, for any $y\in \mcal A_{v,w} $, there is a unique $y^{\prime}\in \mcal A_{v,w^{\prime}}$ such that $y=\rho\cdot y^{\prime}$.
Thus we have
$$T =  \left(\pi(y^{\#}_{i}\cdot y_j)\right)_{i,j}=  \left(\pi\left((\rho\cdot y^{\prime}_i)^{\#}\cdot (\rho\cdot y^{\prime}_j)\right)\right)_{i,j}=\left(\pi(y^{\prime \#}_i \cdot y^{\prime}_j)\right)_{i,j},$$
hence positivity follows from the previous case.
Defining $\overline{\Lambda}$ to be the union of $ \Lambda $ and the set of words of even length (regardless of starting character), we have just shown positivity for weights in $\overline{\Lambda}$.

Now suppose $ w$ has odd length; say $ w= a_{-k} \ldots a_{-1} a_0 a_1 \ldots a_k$.  
Note that the $ a_{2l} $'s are either all in $\IC$ or all in $ \ID$, and similarly for the odd letters.
Now define the word $w' = a_0 a_1 \ldots a_k a_{-k} \ldots a_{-1}$.
This word no longer represents an isomorphism class of simple object, however the object it represents is isomorphic to a direct sum of simple objects, all of which have even length, i.e., $w^{\prime}\cong \oplus_{s} u_{s}$, where $u_{s}\in \overline{\Lambda}$.
Let $p_{s}\in \left(\mcal{C}*\mcal{D}\right)(u_{s}, w^{\prime})$ be isometries such that $\sum \limits_{s} p^{\ }_{s} p^{*}_{s}=1_{w^{\prime}}$ (which automatically implies $ p^\ast_sp^{\ }_t=\delta_{s,t}\;1_{u_s} $).

Let $\mcal A \textbf{Obj}$ denote the annular algebra whose weight set consists of all isomorphism classes of objects in $\mcal{C}*\mcal{D}$, and pick any rotation $\rho \in \mcal A \textbf{Obj}_{w^{\prime},w}$ (which is automatically unitary).
Then any element $y_{i}\in \mcal A_{v_i,w}$ can be written $y_i=\sum \limits_{s} \rho \cdot p_{s}\cdot y^{\prime}_{s,i}$, where $y^{\prime}_{s,i} \coloneqq p^*_s \cdot \rho^\# \cdot y_i  \in \mcal A_{v_{i},u_{s}}$.
% and $u_{s}$ has even length.
%%%%%%%%%%%%%%%%%%%%%%%%%%%%%%%%%%
\comments{Let $m$ denote the number of simple objects occurring in the decomposition of $w^{\prime}$ (counting multiplicities).  

Defining the operator $T^{\prime}= \left(\pi(y^{\prime \#}_{s,i}\cdot y^{\prime}_{t,j})\right)_{(s,i),(t,j)}\in M_{m}(\mcal A \overline{\Lambda})\otimes M_{n}(\mcal A \overline{\Lambda})$, which is positive in this algebra by our previous argument, we see that 

$$T =  \left(\pi(y^{\#}_i \cdot y_j)\right)_{i,j}=  \left(\pi((\sum_{s}\rho \cdot p_s \cdot y^{\prime}_{s,i})^{\#} (\sum_{t}\rho \cdot p_t \cdot y^{\prime}_{t,j})\right)_{i,j}=\left(\sum_{s}\pi(y^{\prime \#}_{s,i} \cdot y^{\prime}_{s,j})\right)_{i,j}$$
$$=Tr\otimes id(T^{\prime})$$

Since $Tr\otimes id:  M_{m}(\mcal A \overline{\Lambda})\otimes M_{n}(\mcal A \overline{\Lambda})\rightarrow M_{n}(\mcal A \overline{\Lambda}) $ is a positive map (indeed, it is a positive scalar times the canonical trace preserving conditional expectation of the inclusion $ 1\otimes M_{n}(\mcal A \overline{\Lambda})\subseteq M_{m}(\mcal A \overline{\Lambda})\otimes M_{n}(\mcal A \overline{\Lambda}) )$, we see that $T$ is positive,  
}%%%%%%%%%%%%%%%%%%%%%%%%%%%%%%%%%%
Observe that 
\begin{equation*}
\begin{split}
T & =  \left(\pi(y^{\#}_i \cdot y_j)\right)_{i,j} =  \left(\pi \left( \left[\sum_{s}\rho \cdot p_s \cdot y^{\prime}_{s,i} \right]^{\#} \cdot \left[ \sum_{t}\rho \cdot p_t \cdot y^{\prime}_{t,j} \right] \right) \right)_{i,j} \\
& = \sum_t \left( \pi \left( [y'_{t,i}]^\# \cdot y'_{t,j} \right)  \right)_{i,j}
\end{split}
\end{equation*}
which is positive as all $ u_t $'s are in $ \ol \Lambda $ and hence our argument is complete.

\medskip

Now that we have shown $\langle x, x\rangle_{w}\ge 0$, we can define $Ind(V)_{w}$ as the Hilbert space obtained by the completion of the quotient of our vector space over the null space of the inner product.
Before quotienting and completing, our vector space has the obvious action of $\mcal{A}$.  Our above argument shows that $ \langle \pi (\cdot) x , x \rangle_w$ is a positive annular functional.
Thus by \cite[Lemma 4.4]{GJ}, we have a well-defined, bounded, $*$-action of the tube algebra $\mcal{A}$ on $Ind(V)$.
It is now easy to verify that $Res\circ Ind(V)\cong V$ via the interwiner defined by sending $\sum_{i} y_{i}\otimes \xi_{i}$ to $\pi(y_i)\xi_{i}$.

Now to prove that $Res$ is fully faithful, we first claim that any representation $ (\theta ,\mcal H) \in \t {Rep} (\mcal A)$ is generated by $ \us{w \in \Lambda} \bigcup \mcal H_w $.
We need to check $$\mcal H^0_w := \t{span} \left\{ \theta (x) \xi : x\in \mcal A_{v,w} ,\, \xi \in \mcal H_{v} ,\, v \in \Lambda  \right\} $$ is dense in $\mcal H_w $ for all $w\in \Irr(\mcal{C}*\mcal{D}) \setminus \Lambda $; we will, in fact, show $\mcal H^0_w =\mcal H_w $.
Now, $ w \in \Irr(\mcal{C}*\mcal{D}) \setminus \Lambda  $ implies $ \abs w \geq 2 $.
Suppose $ w $ is of $ \mathcal{D} $-$ \mathcal{C}$ type, so that $ w = d u $ for some $ u $ of $ \mcal{C} $-$ \mcal{C} $ type. We have the unitary rotation 

$$\rho := 1_d \otimes 1_{u} \otimes 1_d \in \left(\mcal{C}*\mcal{D}\right)(d w^{\prime} , w d) = \mcal A^d_{w^{\prime}, w} \subset \mcal A_{w^{\prime}, w},$$ where $ w^{\prime} = u d \in \Lambda $, whose $ \theta $-action implements a unitary from $\mcal H_{w^{\prime}} $ to $\mcal H_w $; so, $\mcal H^0_w =\mcal H_w $.

The remaining elements of $\Irr(\mcal{C}*\mcal{D}) \setminus \Lambda $ are words of types $ \mathcal{C} $-$ \mathcal{C}$ or $ \mathcal{D} $-$ \mathcal{D} $ type, which neccessarily have odd length $\ge 3$.
Consider such a $ w $, say $ w= a_{-k} \ldots a_{-1} a_0 a_1 \ldots a_k$.
As above, the even $ a_{i} $'s are either all in $\IC $ or all in $ \ID$.
Let $w' : = a_0 a_1 \ldots a_k \otimes a_{-k} \ldots a_{-1} $ or $a_1 \ldots a_k \otimes a_{-k} \ldots a_{-1} a_0$ depending on whether $ a_0  \in \IC$ or $ \ID $, and $ \rho' $ be the rotation unitary from $ w $ to $ w'$.
Note that $ w' $ may no longer be simple; however, it decomposes into a direct sum of simple objects all of which either have even length or lie in $ \Lambda $ (using the fusion rule).
Suppose $ w' \cong \us i \oplus u_i$ is the simple object decomposition.
%Note that $w^{\prime}$ no longer represents an isomorphism class of simple object; however the object it represents is isomorphic to a direct sum of simple objects, all of which have even length and lie in $\Lambda$ (by definition of $w^{\prime}$), i.e., $w^{\prime}\cong \oplus_{i} u_{i}$, where $u_{i}\in \Lambda$.
Let $p_{i}\in \left(\mcal{C}*\mcal{D}\right)(u_{i}, w^{\prime})$ be isometries such that $\sum_{s} p^{\ }_{i} p^{*}_{i}=1_{w^{\prime}}$.
%(implying $p^{*}_{j}p^{\ }_{i}=\delta_{i,j} 1_{u_{i}}$).
Set $ x_i := (\rho')^\# \cdot p_i  \in \mcal A_{u_i,w}$.
Clearly, $ \sum_{i} x_i \cdot x^\#_i = 1_w $ (in $ \mcal A_{w,w} $).
Since the $ u_i$'s belong to $ \Lambda $, any $ \xi \in \mcal H_w $ can be expressed as $ \sum_{i} \theta (x_i) [\theta (x^\#_i) \xi] \in \mcal H^0_w $.

\vskip 1em
Thus our claim that any representation is generated by the $\Lambda$ weight spaces is proven.
This immediately implies that the restriction functor is faithful.
It also shows that $Res$ is full.  Indeed, consider a morphism $ f: Res(\pi ,\mcal H) \ra Res(\gamma , \mcal K)$ in $ Rep(\mcal A \Lambda) $.
For $ w \in \Irr(\mcal{C}*\mcal{D}) \setminus \Lambda $, if an $\mcal A$-linear extension of $f$ exists we see that $f(\sum \pi(y_{i})\xi_{i})=\sum \gamma(y_i)f(\xi_i)$, for $y_{i}\in \mcal{A}_{v,w}$, $v\in \Lambda$, and $\xi\in \mcal H_{v}$.
Indeed, this will serve as a definition of the extension, but we must show it is well defined.
Suppose $\sum \limits_{i} \pi(y_{i})\xi_{i}=0$.
Then for any fixed $j$, $\sum \limits_{i} \pi(y^{\#}_{j}\cdot y_{i})\xi_{i}=0$.
Since $y^{\#}_{j}\cdot y_{i}\in \mcal{A} \Lambda$, we have

$$\sum_{i,j}\langle \gamma(y_{i})f(\xi_i), \gamma(y_{j})f(\xi_j)\rangle_{\mcal K}=\langle \gamma(y^{\#}_{j}\cdot y_{i})f(\xi_i), f(\xi_j)\rangle_{\mcal K}$$

$$=\sum_{j} \sum_{i}\langle \pi(y^{\#}_{j}\cdot y_{i})\xi_i, f^{*}f(\xi_j)\rangle_{\mcal H}=0$$
It is easy to see that the extension of $f$ remains bounded.  This concludes the proof.
\comments{
we extend $ f $ in the following way
\[
H_w \ni \xi \os {\displaystyle0\widetilde{S}}\longmapsto
\left\{\begin{tabular}{rl}
$ \vphi (\rho)\; S\; \theta (\rho^\#) $, & if $ w $ is of $ d $-$ c $ type,\\
$\sum \limits^n_{i=1}  \vphi(x_i)\; S\; \theta (x^\#_i) $, & if $ w $ is of $ c $-$ c $ or $ d $-$ d $ type,
 \end{tabular}\right.
\]
where we use the notations $ \rho $ and $ x_i $'s defined in the proof of Assertion 1.
It is routine to verify that $ \widetilde T $ is indeed $ \mcal A $-linear. \red{(typical case by case checking, should we elaborate?)}
}
\end{proof}

We proceed to the study of the $ * $-algebra $ \mcal A\Lambda $.
We divide this into subsections corresponding to the length (denoted by $ \abs \cdot $) of the words in $ \Lambda $.
Since the empty word (that is, zero length word) stands for the tensor unit of $ \mcal{C}*\mcal{D} $, the centralizer algebra $ \mcal A \Lambda_{\emptyset,\emptyset} $ is isomorphic to the fusion algebra, and we will be able to describe admissible representations of these in terms of representations of free product C*-algebras.  Thus in this section, we will focus on the structure of $\mcal A \Lambda_{v,w}$ for words $v,w\in \Lambda$ of positive length.
By $ \mcal A\mcal C $ (resp., $ \mcal A\mcal D $) we mean the tube algebra/category of $ \mcal C $ (resp. $ \mcal D $).
%If $ w \in \t{Irr}(\mcal E) $, then $ w=w_1w_2\ldots w_n $ with $ w_i $'s coming alternatively from $ \t{Irr}(\mcal C) \t{ and Irr}(\mcal D) $.
%We say $ w $ is of $ c\t{-}c $ (resp. $ c\t{-}d$, $ d\t{-}d$, $d\t{-}c $) type if $ w_1 \in \t{Irr}(\mcal C) \t{ and } w_n \in \t{Irr}(\mcal C)  $(resp. $ w_1 \in \t{Irr}(\mcal C) \t{ and } w_n \in \t{Irr}(\mcal D) $, $ w_1 \in \t{Irr}(\mcal D) \t{ and } w_n \in \t{Irr}(\mcal C) $, $ w_1 \in \t{Irr}(\mcal D) \t{ and } w_n \in \t{Irr}(\mcal D)   $)  
\subsection{Words of length at least $ 2 $}

Define a relation on $\W$ by $w_{1}\sim w_2 $ if and only if $ w_1=uv $ and $w_2=vu$ for some subwords $u,v$.
Clearly, $ \sim $ defines an equivalence relation on $\W$.  Obviously if $ w_1 \sim w_2 $, then $ |w_1| = |w_2| $.
\begin{lem} \label{equiv}
	For $w_1, w_2 \in \W $, $ \mcal A\Lambda_{w_1.w_2} \neq \{0\} $ if and only if $ w_1\sim w_2 $.	
\end{lem}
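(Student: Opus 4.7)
The plan splits into two directions; the ``only if'' direction carries the technical content. For the ``if'' direction, suppose $w_1=uv$ and $w_2=vu$ with both in $\W$. The alternation constraint forces $|u|$ and $|v|$ to both be even (otherwise $w_2$ would begin with a letter in $\ID$). Then $v\in\Sigma_0$ represents a simple $a\in\Irr(\mcal{C}*\mcal{D})$ (taking $a=\mathbbm{1}$ when $v=\emptyset$), and $a\otimes w_1=vuv=w_2\otimes a$ as words, so the identity is a non-zero element of $\mcal{A}\Lambda^a_{w_1,w_2}$.

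For the ``only if'' direction, suppose $\mcal{A}\Lambda^a_{w_1,w_2}\neq 0$ with $\sigma_a\in\Sigma_0$ representing $a$. By semisimplicity of $\mcal{C}*\mcal{D}$, this is equivalent to $\sigma_a w_1$ and $w_2\sigma_a$ sharing a common simple summand in $\mcal{NCP}$. The simple decomposition of $\sigma_a w_1$ is produced iteratively: at the junction between $\sigma_a$'s last letter and $w_1$'s first (if these lie in the same category), apply $x\otimes y=\bigoplus_z N^z_{xy}z$; a non-unit $z$ terminates that branch, producing an alternating simple summand, while $z=\mathbbm{1}_\pm$ cancels the pair and exposes a new junction, the cancellation process possibly running through to a suffix of $w_1$, a prefix of $\sigma_a$, or $\mathbbm{1}$. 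An analogous process decomposes $w_2\sigma_a$.

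The argument now splits by the signature of $\sigma_a$. If $\sigma_a=\mathbbm{1}$ or $\sigma_a$ has even length starting in $\IC$, no junctions arise and both concatenations are already simple; sharing a summand means the word-equality $\sigma_a w_1=w_2\sigma_a$, and a direct positional matching yields $w_1=uv$, $w_2=vu$ with $v=\sigma_a$ when $|\sigma_a|\le|w_1|$ (or with a periodic identification arising from $\sigma_a$ when $|\sigma_a|>|w_1|$). If $\sigma_a$ has odd length, precisely one side has a junction and the other is a simple of length $|\sigma_a|+|w_1|$; a parity and size comparison of the summand lengths on the junction side excludes any match.

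The main technical case is when $\sigma_a$ has even length starting in $\ID$, giving junctions on both sides. A parity analysis divides summands into those that ``terminate at a non-unit letter'' (odd length) and those arising from ``full cancellation'' (even length); matches between the former across the two decompositions are excluded by leading-letter mismatch, since any such summand of $\sigma_a w_1$ starts with a letter in $\ID$ whereas the analogous summand of $w_2\sigma_a$ starts with a letter in $\IC$. This leaves three possibilities among full-cancellation summands: a shared $\mathbbm{1}$ when $|\sigma_a|=|w_1|$; the equality $w_1[n+1\ldots m]=w_2[1\ldots m-n]$ of suffix and prefix when $n=|\sigma_a|<|w_1|=m$; or the self-periodicity $\sigma_a[1\ldots n-m]=\sigma_a[m+1\ldots n]$ when $n>m$. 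In each sub-case the duality conditions $\sigma_a[n-j+1]=\overline{w_1[j]}$ and $\sigma_a[j]=\overline{w_2[m-j+1]}$ required by the appropriate chain of unit-fusions, combined in the last sub-case with the self-periodicity of $\sigma_a$, collapse via $\overline{\overline{\cdot}}=\mathrm{id}$ to relations of the form $w_1[i]=w_2[i+k\bmod m]$ for an appropriate shift $k$, exhibiting the desired cyclic shift $w_1=vu$, $w_2=uv$ and hence $w_1\sim w_2$.
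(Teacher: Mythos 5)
Your argument is correct, and its skeleton (classify the middle object $v$ with $\mcal A\Lambda^v_{w_1,w_2}\neq 0$ by parity and type, then reduce to word combinatorics) is the same as the paper's; the difference lies in how the hardest case is dispatched. The paper handles the even-length $v$ of $\mcal{D}$-$\mcal{C}$ type in one line, by applying the involution $\#$ (which carries $\mcal A\Lambda^{v}_{w_1,w_2}$ onto $\mcal A\Lambda^{\bar v}_{w_2,w_1}$ with $\bar v$ of $\mcal{C}$-$\mcal{D}$ type) to reduce to the no-junction case, where both $vw_1$ and $w_2v$ are already simple and one solves the word equation $vw_1=w_2v$. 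You instead attack the double-junction case head-on, decomposing $\sigma_a w_1$ and $w_2\sigma_a$ into simples, ruling out shared odd-length summands by the leading-letter mismatch ($\ID$ versus $\IC$), and extracting the cyclic shift from the full-cancellation summands; this is sound but costs you the longest paragraph of the proof for a case that the symmetry of the tube algebra gives for free. On the other side of the ledger, your treatment is more explicit in two places where the paper is terse: in the odd-length case you correctly isolate the parity obstruction (every simple summand on the junction side has even length while the junction-free side is simple of odd length), and your ``if'' direction --- observing that $|u|$ and $|v|$ are forced even so that $v\in\Sigma_0$ is simple and $1_{vuv}\in\mcal A\Lambda^{v}_{w_1,w_2}$ --- is a clean alternative to the paper's rotation by $\bar u$. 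Both the periodicity discussion when $|\sigma_a|>|w_1|$ and the exhaustiveness of your case split check out.
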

\begin{proof}
Suppose $ w_1\sim w_2$ so that  $ w_1=uv $ and $w_2=vu$.
Consider the rotation $ \rho:= (1_{v}\otimes \bar R_{u})(R^{*}_{u}\otimes 1_{v}) \in \left(\mcal{C}*\mcal{D}\right)(\bar{u}w_1,w_2\bar{u})\subseteq \mcal{A}_{w_1, w_2} $ for any standard solution $ (R_{u} , \ol R_{u}) $ to the conjugate equation for $ (u , \ol u) $.
It is non-zero (since it is unitary) and hence $ \mcal A\Lambda_{w_1.w_2} \neq \{0\} $.

Now suppose $ \mcal A\Lambda_{w_1.w_2} \neq \{0\}$ and without loss of generality, let  $ w_1 \neq w_2 $.
Then there exists $ v \in \Irr(\mcal{C}*\mcal{D})$ (of length, say, $ m>0 $) such that $ \mcal A\Lambda_{w_1.w_2}^v \neq \{0\} $.
%$ v $ being simple in $ \mcal E $, it is a word with alternating letters from $ \t{Irr}(\mcal C) \t{ and Irr}(\mcal D) $.
%Let $ v = v_1v_2\ldots v_m $ with $ v_i $'s belonging alternatively to $ \mscr I_c $ and $ \mscr I_d $. 
%	We will have four cases depending on the type of $ w_1 $ and $ w_2 $.
%	We shall consider only one case here and proof in other cases is by similar arguments.	
%	Let $ w_1 $ and $ w_2 $ are both of $ c\t{-}d $ type.		
Suppose $ m $ is odd. Then $ v $ is either of $ \mcal{C}$-$\mcal{C}$ type or $ \mcal{D}$-$\mcal{D}$ type.
If $ v $ is of $ \mcal{C}$-$\mcal{C}$ type (resp. $ \mcal{D}$-$\mcal{D}$ type), then $ w_2v $ (resp. $ vw_1 $) is simple and is of odd length, whereas $ vw_1 $ (resp. $ w_2v $) is not simple and any simple subobject will be of length strictly smaller than that of $ vw_1 $.
Hence $ \mcal A\Lambda_{w_1.w_2}^v = \{0\} $ which is a contradiction.
So $ m $ cannot be odd.
	
Thus $m$ must be even, so $v $ can be of  $ \mcal{C}$-$\mcal{D}$ or $ \mcal{D}$-$\mcal{C}$ type.
It is enough to consider the case where $ v  $ is of $ \mcal{C}$-$\mcal{D}$ type, since the other case will follow by taking $ \# $.
As $ w_1, w_2\in \W $, $ vw_1$ and $ w_2v $ are simple.
Therefore, $ \mcal A\Lambda_{w_1.w_2}^v \neq \{0\} $ implies the equality
\begin{equation}\label{gcd}
vw_1=w_2v
\end{equation}

In particular, we see that $ w_1 $ and $ w_2 $ have the same length, say $ n $.

If $ m = n $, then Equation \ref{gcd} implies $ w_1 = v = w_2 $ which is not possible by assumption.
Suppose $ m < n $.
By Equation \ref{gcd}, there exists a word $ u $ such that $ w_2 = vu $.
So, $ vw_1 = vuv $ implying $ w_1 = uv $, and thus $ w_1 \sim w_2 $.

We are left with the case when $ m>n $.
Equation \ref{gcd} tells us that $ v $ starts with the subword $ w_2 $; say $ v=w_2 v' $.
Plugging this into Equation \ref{gcd}, we get $ v' w_1 = w_2 v' $.
Note that $ \abs{v'} =n-m$.
If length of $ v' $ is not less than or equal to $ n $, then we repeat the above argument with $v^{\prime}$.
Since $\abs{v^{\prime}}<\abs{v}$, we will eventually find some tail-end subword of $v$, say $v_{0}$, such that $v_{0}w_1=w_2 v_{0}$ with $\abs{v_{0}}\le n$.
Then we apply the previous cases.
%, which in turn is possible only if $ w_1=vw' \t{ and } w_2=w'v \t{ for some }w' \in \Lambda$.
%Hence $w_1 \sim w_2 $.
\end{proof}

Using similar techniques, we also have the following lemma:

\begin{lem}\label{uneq len}
Let $w\in \W$.  For any $v\in \Lambda\setminus \W$, $\mcal A_{v,w}=\{0\}$.
\end{lem}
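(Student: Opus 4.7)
The plan is to reduce the vanishing of $\mcal{A}_{v,w}$ to a combinatorial statement about lengths of simple summands in $\mcal{C}\ast\mcal{D}$. By rigidity, for each simple $a \in \mcal{C}\ast\mcal{D}$ we have
$$\mcal{A}^{a}_{v,w} = (\mcal{C}\ast\mcal{D})(a \otimes v,\, w \otimes a) \cong (\mcal{C}\ast\mcal{D})(a \otimes v \otimes \overline{a},\, w),$$
so it suffices to show that $w$ is never a simple summand of $a \otimes v \otimes \overline{a}$. Note that since $v \in \Lambda \setminus \W = \{\emptyset\} \cup \IC \cup \ID$, we have $|v| \in \{0,1\}$, while $|w|$ is even and at least $2$.

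The central combinatorial claim I would establish is: for any simple $a \in \mcal{C}\ast\mcal{D}$ and any $v$ with $|v| \le 1$, every simple summand of $a \otimes v \otimes \overline{a}$ has length that is either odd or equal to $0$. This immediately forces $w \not\le a \otimes v \otimes \overline{a}$ for every simple $a$, because $|w|$ is even and strictly positive, and hence $\mcal{A}_{v,w} = 0$.

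To establish this claim, I would induct on $k = |a|$, writing $a = a_{1} \cdots a_{k}$ in its canonical alternating word form so that $\overline{a} = \overline{a_{k}} \cdots \overline{a_{1}}$. The base case $k = 0$ gives $a \otimes v \otimes \overline{a} = v$, of length at most $1$, as required. For the inductive step, I would split into three cases: (i) $v = \emptyset$; (ii) $|v| = 1$ with $v$ in the same tensor factor as $a_{k}$; and (iii) $|v| = 1$ with $v$ in the opposite factor from $a_{k}$. In case (iii) the concatenation $a v \overline{a}$ is already a simple alternating word of odd length $2k + 1$, so there is nothing more to check. In cases (i) and (ii), the tensor product is computed by fusing $a_{k} \otimes \overline{a_{k}}$ or $a_{k} \otimes v \otimes \overline{a_{k}}$ within a single rigid factor $\mcal{C}$ or $\mcal{D}$; each non-unit summand of this internal fusion contributes a simple alternating word of odd length $2k - 1$, while the unit summand reduces the remaining product to $a' \otimes \overline{a'}$ with $a' = a_{1} \cdots a_{k-1}$, which is handled by the induction hypothesis applied with $v' = \emptyset$.

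The main obstacle, I expect, is arranging the combinatorial claim to be broad enough to simultaneously handle $v = \emptyset$ and $|v| = 1$ and yet remain closed under the recursion; the saving grace is that after one round of middle fusion the problem always reduces to the $v = \emptyset$ sub-case, so the induction carries through cleanly without needing to track $v$ further.
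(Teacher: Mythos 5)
Your proof is correct, and it reorganizes the argument in a way that differs from the paper's. The paper splits into the cases $v=\emptyset$ and $|v|=1$ and, in the latter, runs a four-way case analysis on the type ($\mcal{C}$-$\mcal{C}$, $\mcal{D}$-$\mcal{D}$, $\mcal{C}$-$\mcal{D}$, $\mcal{D}$-$\mcal{C}$) of the rotating word $u$, each time comparing the lengths of the simple summands of $u\otimes v$ against those of $w\otimes u$ (after moving $u$ across with duality where convenient) to force the morphism space to vanish. You instead apply Frobenius reciprocity once to land in $(\mcal{C}*\mcal{D})(a\otimes v\otimes \overline{a},\, w)$ and prove a single parity statement — every simple summand of $a\otimes v\otimes\overline{a}$ has odd or zero length when $|v|\le 1$ — by induction on $|a|$, peeling off the innermost fusion $a_k\otimes v\otimes\overline{a_k}$ at each step. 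What your route buys is uniformity: the $v=\emptyset$ and $|v|=1$ cases are handled by the same induction, and you actually supply a proof of the fact that simple subobjects of $u\overline{u}$ are of $\mcal{C}$-$\mcal{C}$ or $\mcal{D}$-$\mcal{D}$ type (equivalently, of odd or zero length), which the paper asserts without argument in its $v=\emptyset$ case. What the paper's route buys is that each individual case is a one-line length comparison requiring no induction. Both arguments rest on the same underlying combinatorics of the free product fusion rules, so the difference is one of packaging rather than substance, but your version is arguably the more self-contained of the two.
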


\begin{proof}
First we consider the case $v=\emptyset$.
In general, $\mcal A_{\emptyset,w}\ne \{0\}$ implies that $w$ is an object in the adjoint sub-category of $\mathcal{C}*\mathcal{D}$, or in other words, $w$ is isomorphic to a sub-object of $u\bar{u}$ for some simple object $u\in \mathcal{C}*\mathcal{D}$.
If $u$ is length $0$, then obviously $\abs w=0$, a contradiction.
If $u$ has length greater than or equal to $1$, as every word that appears as a sub-object of $v\bar{v}$ is of $\mcal{C}$-$\mcal{C}$ or $\mcal{D}$-$\mcal{D}$ type, $ w $ cannot be a sub-object of $ u\bar{u} $, which implies that $\mcal A_{\emptyset,w}=\{0\}$.

Now we consider the case that $v$ has length $1$.
First assume $v\in \mcal C$.
If $\mcal A_{v,w}\ne\{0\}$, then there is some word $u$ so that $\left(\mcal{C}*\mcal{D}\right)(uv,wu)\ne\{0\}$, which is equivalent to $\left(\mcal{C}*\mcal{D}\right)(v\bar{u},\bar{u}w)\ne \{0\}$.
First suppose $\abs{u}$ is odd.
If it is of $\mcal{C}$-$\mcal{C}$ type, then $wu$ is simple, and $uv$ is isomorphic to a direct sum of simple objects each of which have length strictly smaller than the length of $wu$, so the morphism space must be $0$.
Similarly if $u$ is of $\mcal D$-$\mcal D$ type, then so is $\bar{u}$, and our hypothesis implies $\left(\mcal{C}*\mcal{D}\right)(v\bar{u},\bar{u}w)\ne \{0\}$.
In this case, both words are simple, but $\abs{v\bar{u}}<\abs{\bar{u}w}$, and thus the morphism space must be $\{0\}$.

Thus we are left to consider the case when $\abs{u}$ is even.
If $u$ is $\mcal{C}$-$\mcal{D}$ type, then $wu$ is simple, and the length is strictly greater than the length of any subobject of $uv$ (since $\abs{v}=1$) a contradiction.
If $u$ is $\mcal{D}$-$\mcal{C}$ type, then $\bar{u}w$ is simple with length strictly greater than the length of any simple sub-object of $v\bar{u}$.  

The case with $v\in \mathcal{D}$ is entirely analogous.
\end{proof}

\comments{
\begin{rem}\label{word equiv implies rep equiv}
\sout{Suppose $(\pi,\mcal H)$ is a representation of  $ \mcal A\Lambda$ and $w_1 \sim w_2$ in $ \Gamma $.
Let $ \rho $ be as in proof of \ref{equiv}. Then $ u $ is a unitary and $ \mcal A\Lambda_{w_1,w_1}=u^*\mcal A\Lambda_{w_2,w_2}u  $. 
Hence ($ \pi_{w_1},\mcal H_{w_1} $) and ($ \pi_{w_1},\mcal H_{w_1} $) are conjugate to each other (where ($ \pi_{w},\mcal H_{w} $) is the restriction of $(\pi,\mcal H)$ to $ \mcal A\Lambda_{w,w}$).}
\end{rem}
}

\begin{lem}\label{wt > 1}
	For $ w \in \W $, the centralizer algebra $ \mcal A\Lambda_{w,w}$ is isomorphic to the group algebra $\C[\Z]$ as $*$-algebras.
	%Thus, irreducible representations of $ \mcal A \Lambda $  which have support at lowest weight $ 2 $ can be characterized uniquely by $ S^1\times\mcal S $.
\end{lem}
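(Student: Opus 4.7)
The strategy is to decompose $\mcal A\Lambda_{w,w} = \bigoplus_{v \in \Irr(\mcal C \ast \mcal D)} \mcal A\Lambda_{w,w}^v$ along the internal weight $v$, identify the nonzero components and their dimensions, and then exhibit a unitary rotation that generates the whole algebra as $\C[\Z]$. Write $w = u_0^n$ where $u_0 \in \W$ is the primitive root of $w$ in the free monoid on $\IC \sqcup \ID$ (i.e., $u_0$ is not itself a proper power); the integer $n \geq 1$ is then unique.

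First I would determine which weights $v$ contribute, by running the argument of Lemma~\ref{equiv} with $w_1 = w_2 = w$. If $|v|$ is odd, then $v$ is of $\mcal C$-$\mcal C$ or $\mcal D$-$\mcal D$ type, in which case one of $vw$ and $wv$ is simple of length $|v|+|w|$ while every simple summand of the other has strictly smaller length, forcing $\mcal A\Lambda_{w,w}^v = 0$. If $v$ is of $\mcal C$-$\mcal D$ type, both $vw$ and $wv$ lie in $\W$ and are simple, so a nonzero morphism forces the equality $vw = wv$ as words; the classical combinatorial fact that two commuting words in a free monoid are powers of a common word then yields $v = u_0^k$ for some $k \geq 1$. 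If $v$ is of $\mcal D$-$\mcal C$ type, the $\#$-involution gives an antilinear bijection $\mcal A\Lambda_{w,w}^v \cong \mcal A\Lambda_{w,w}^{\bar v}$, reducing to the previous case and yielding $v = \bar{u_0}^k$. Together with $v = \emptyset$, the support is therefore $\{u_0^k : k \geq 0\} \sqcup \{\bar{u_0}^k : k \geq 1\}$, and each such weight space is one-dimensional: $u_0^k \otimes w = u_0^{n+k} = w \otimes u_0^k$ is simple, so $\mcal A\Lambda_{w,w}^{u_0^k} \cong \C$, and the $\#$-bijection gives $\dim \mcal A\Lambda_{w,w}^{\bar{u_0}^k} = 1$.

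For the $*$-algebra structure, I would take $\rho := 1_{u_0^{n+1}} \in \mcal A\Lambda_{w,w}^{u_0}$, using the identification $u_0 \otimes w = u_0^{n+1} = w \otimes u_0$. An induction on $k$ using the tube-algebra multiplication formula (the sum over $c \leq u_0^{\otimes k}$ collapses to $c = u_0^k$ since this object is simple) shows $\rho^k = 1_{u_0^{n+k}}$, a nonzero generator of $\mcal A\Lambda_{w,w}^{u_0^k}$; dually, $(\rho^{\#})^k = (\rho^k)^{\#}$ generates $\mcal A\Lambda_{w,w}^{\bar{u_0}^k}$. The product $\rho \cdot \rho^{\#}$ lies a priori in $\bigoplus_{c \leq u_0 \otimes \bar{u_0}} \mcal A\Lambda_{w,w}^c$; a Frobenius reciprocity check shows $u_0^j, \bar{u_0}^j \not\leq u_0 \otimes \bar{u_0}$ for $j \geq 1$, so by the support analysis above only the $c = \mathbbm{1}$ summand can contribute, giving $\rho \cdot \rho^{\#} = \lambda \cdot 1_w$. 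Applying the faithful positive trace $\Omega$ yields $\lambda > 0$, and tracial symmetry gives the same identity for $\rho^{\#} \cdot \rho$. After rescaling $\rho \mapsto \rho/\sqrt{\lambda}$, $\rho$ becomes a unitary generator and the $*$-algebra map $\C[\Z] \to \mcal A\Lambda_{w,w}$ sending the canonical generator to $\rho$ is the desired $*$-isomorphism.

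The main technical obstacle is pinning down that $\rho \cdot \rho^{\#}$ is a nonzero scalar multiple of $1_w$: all potential ``cross-term'' contributions from non-unit simples of $u_0 \otimes \bar{u_0}$ must be ruled out using the support analysis of the first step, which in turn depends essentially on the word-combinatorial fact that the centralizer of $w$ in the free monoid on $\IC \sqcup \ID$ is the cyclic monoid generated by its primitive root $u_0$.
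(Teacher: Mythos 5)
Your proof is correct and follows essentially the same route as the paper's: write $w$ as a power of its primitive root $v$, use the commuting-words argument (the paper phrases it via periodicity of the bi-infinite word $\ldots vwvw\ldots$) to show the support of the weight decomposition is exactly $\{v^n : n\in\Z\}$ with each weight space one-dimensional by simplicity of $v^{k+n}$, and take the rotation $1_{v^{k+1}}$ as the generator of $\C[\Z]$. The only cosmetic difference is that you verify $\rho\cdot\rho^{\#}=\lambda 1_w$ and rescale, whereas the paper asserts unitarity of the rotation directly and exhibits its inverse.
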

\begin{proof}
Let $v$ be a subword of $w$ such that $ w=v^k=\underbrace{vv\ldots v}_{k\t {-times}} $, for largest possible positive integer $ k $.
We will say that $ w $ is \textit{maximally periodic with respect to} $ v $. 
Note that $ v $ must be of $ \mcal C$-$ \mcal D$ type.
Consider the (unitary) rotation 

$$ \rho^v_{w,w} := 1_{v^{k+1}} \in \left(\mcal{C}*\mcal{D}\right)(vw,wv) = \mcal A \Lambda^v_{w,w}$$
 whose inverse is given by 

 $$ \left(\rho^{v}_{w,w}\right)^\# = (1_{v^{k-1}} \otimes \ol R_v) (R^*_v \otimes 1_{v^{k-1}}) \in \left(\mcal{C}*\mcal{D}\right)(\ol v w, w \ol v) = \mcal A \Lambda^{\ol v}_{w,w}$$ for any standard solution $ (R_v, \ol R_v) $ of the conjugate equation for $ (v , \ol v) $.

Note that for any $ n\in \Z $, $ \left(\rho^{v}_{w,w}\right)^n \in \mcal A \Lambda^{v^n}_{w,w} $ with the convention $v^{-1}=\bar{v} \t{ and } v^0 := \mathbbm{1}$.
Thus, $ \left\{ \left(\rho^{v}_{w,w}\right)^n : n \in \Z \right\} $ is an orthogonal sequence in $ \mcal A \Lambda_{w,w} $ with respect to the canonical trace.
Hence, we have an injective homomorphism from $\C [\Z] $ to $ \mcal A \Lambda $ sending the generator of $\Z$, which we denote $g$, to $ \rho^{v}_{w,w} $.
It remains to show that the homomorphism is surjective.

We now claim that if $ u\in \Irr(\mcal{C}*\mcal{D})$, then $ \mcal A\Lambda^u_{w,w}= \left(\mcal{C}*\mcal{D}\right)(uw,wu) \neq \{0\} $ if and only if $ u=v^n \t{ for some } n \in \Z$.

By the same argument as in proof of ``if" part of Lemma \ref{equiv}, it is easy to deduce that $ u $ must be any one of $ \mcal{C} $-$ \mcal{D} $ or $ \mcal{D}$-$\mcal{C}$ types if $ \mcal A\Lambda^u_{w,w}= \left(\mcal{C}*\mcal{D}\right)(uw,wu) \neq \{0\}$.
It suffices to consider the case of $ \mcal{C}$-$\mcal{D}$ type $ u $, since the other case will follow from this by applying $ \# $.

Since both $ u$  and   $w$ are of $ \mcal{C}$-$\mcal{D}$ type, both $ uw $ and $ wu $ are simple, $ \left(\mcal{C}*\mcal{D}\right)(uw,wu) \neq \{0\}$ implies $ uw=wu $.
Now, consider the bi-infinite word $ \ldots u w u w u w \ldots $.
If $ m = \abs u $ and $ n = \abs w $, then by the commutation of $ u $ and $ w $, we may conclude that the infinite word  is both $ m $- and $ n $-periodic, and thereby, $ l := \gcd (m,n) $-periodic.
So, there exists a word $ v^{\prime} $ of length $ l $ such that both $ u $ and $ w $ are integral powers of $ v^{\prime} $.
Since $ w $ is maximally periodic with respect to $ v $, $\abs v \leq \abs{v^{\prime}} $, which will then imply that $ v^{\prime} $ is an integral power of $ v $.
Hence, $ u $ is an integral power of $ v $.

We will be done if we can show $ \mcal A \Lambda^{v^n}_{w,w}  = \C  \rho^{v^n}_{w,w} $ for $ n\in \Z $.
Again, it is enough to show for $ n\geq 0 $ since the other cases follow by taking $ \# $.
If $ n \geq 0 $, however, then $ \mcal A \Lambda^{v^n}_{w,w} = \left(\mcal{C}*\mcal{D}\right)(v^{k+n} , v^{k+n}) $ is one-dimensional (by simplicity of $ v^{k+n} $).
\end{proof}

Via the inclusion $ \W \subset \Lambda $, we may consider $ \mcal A \W $ as a $ * $-subalgebra of $ \mcal A \Lambda $.
In fact, by Lemma \ref{uneq len}, we see that $A\W$ is actually a summand of $A\Lambda$.
The above lemma now allows us to identify $A\W$.
Let $\W_{0}=\W/\sim $, the set of equivalence classes of words in $\W$ modulo the cyclic relation $\sim$ defined in the beginning of this section.
Recall that $M_{n}(\C)$ denotes the algebra of $n\times n$ matrices.

\begin{cor}\label{wt>1corner}
$\mcal{A}\W$ is a direct summand of the algebra $\mcal{A}\Lambda$.  Moreover, as $ * $-algebras 

$$ \mcal A \W \cong \bigoplus_{[w]\in \W_{0}}  M_{|w|}(\C)\otimes \C[\Z].$$ 
\end{cor}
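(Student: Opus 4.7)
The plan is to decompose $\mcal A\W$ first as a direct summand of $\mcal A\Lambda$, then blockwise across the equivalence classes $\W_{0}$, and finally to identify each block as $M_{|w|}(\C)\otimes \C[\Z]$ by using rotational unitaries as matrix units, together with Lemma~\ref{wt > 1} to pin down the diagonal.

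For the summand claim, Lemma~\ref{uneq len} gives $\mcal A\Lambda_{v,w}=\{0\}$ whenever $v\in \Lambda\setminus \W$ and $w\in \W$, and applying the $*$-operation $\#$ also forces $\mcal A\Lambda_{w,v}=\{0\}$. Consequently, for $f\in \mcal A\W$ and $g\in \mcal A(\Lambda \setminus \W)$, both annular products $f\cdot g$ and $g\cdot f$ vanish, so $\mcal A\W$ is a two-sided direct summand. For the block-diagonal structure, Lemma~\ref{equiv} gives $\mcal A\Lambda_{w_1,w_2}=\{0\}$ unless $w_1\sim w_2$; since annular multiplication preserves source/target weight labels, products between distinct $\sim$-classes vanish, yielding
\[
\mcal A\W \;=\; \bigoplus_{[w]\in \W_{0}} \mcal A\W_{[w]}, \qquad \mcal A\W_{[w]}\;:=\;\bigoplus_{w_1,w_2\in [w]} \mcal A\W_{w_1,w_2}
\]
as a direct sum of $*$-subalgebras.

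To identify each block, fix a representative $w_0\in [w]$. For every $w_i\in [w]$ (writing $w_0=u_iv_i$, $w_i=v_iu_i$) the ``if'' direction of Lemma~\ref{equiv} produces a unitary rotation $\rho_i\in \mcal A\W_{w_i,w_0}$, with $\rho_0:=1_{w_0}$. Define
\[
\Phi_{ij}\colon \mcal A\W_{w_0,w_0} \longrightarrow \mcal A\W_{w_j,w_i}, \qquad z\longmapsto \rho_i^\# \cdot z \cdot \rho_j .
\]
Each $\Phi_{ij}$ is a linear bijection (as $\rho_i,\rho_j$ are unitary), and a short calculation using $\rho_i\cdot \rho_i^\# = 1_{w_0}$ gives
\[
\Phi_{ij}(z)\cdot \Phi_{kl}(z') \;=\; \delta_{j,k}\,\Phi_{il}(z\cdot z'), \qquad \Phi_{ij}(z)^\# \;=\; \Phi_{ji}(z^\#) .
\]
Invoking Lemma~\ref{wt > 1} to identify $\mcal A\W_{w_0,w_0}\cong \C[\Z]$, the family $\{\Phi_{ij}\}$ behaves as the standard matrix units tensored with $\C[\Z]$. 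Thus $E_{ij}\otimes z \mapsto \Phi_{ij}(z)$ extends to a $*$-isomorphism $\mcal A\W_{[w]}\cong M_{|w|}(\C) \otimes \C[\Z]$ (reading $|w|$ as the cardinality of the equivalence class $[w]$). Summing over $\W_0$ gives the corollary.

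The only substantive technical point is the unitarity and cyclic compatibility of the rotations $\rho_i$; both are already embedded in the proof of Lemma~\ref{equiv}, where the rotations are built out of standard solutions of conjugate equations tensored with identity morphisms. The mildly delicate step is keeping track of index conventions so that the $\Phi_{ij}$ compose as matrix units and intertwine $\#$ correctly, but once the labels are lined up this is formal.
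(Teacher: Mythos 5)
Your argument is correct and follows essentially the same route as the paper: Lemma \ref{uneq len} for the summand claim, Lemma \ref{equiv} for the block decomposition over $\W_{0}$, and conjugation by rotation unitaries to transport each off-diagonal block onto the diagonal centralizer algebra, which Lemma \ref{wt > 1} identifies with $\C[\Z]$ --- the paper's isomorphism $x\mapsto E_{w_1,w_2}\otimes\sigma_{w_2}\,x\,\sigma^{\#}_{w_1}$ is just the inverse of your map $E_{ij}\otimes z\mapsto\rho_i^{\#}\cdot z\cdot\rho_j$ up to index conventions. Your parenthetical reading of $|w|$ as the cardinality of the class $[w]$ is the one the paper's proof actually uses, since its matrix units are indexed by the elements of $[w]$.
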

\begin{proof}
As explained above, the first statement follows from Lemma \ref{uneq len}.

For the second one, we pick a representative $ w\in [w]\in \W_0 $.
Then for any other $v\in [w]$, it is clear from Lemma \ref{wt > 1} that $\mcal{A}\W_{w,v}\cong \C[\Z]$ as a vector space, where $\Z$ is identified with powers of unitary rotation operators $ \sigma_v \in \mcal A\Lambda_{w, v} $ for all $ v \in [w] $.
Note that $\mcal{A}\W_{w,v}=\{0\}$ for $v\notin[w]$ by Lemma \ref{equiv}.

The required isomorphism is given by the map defined for $w_1,w_2\in [w]$ and $x\in \mcal A S_{w_1,w_2}$ by
\[
  x \longmapsto E_{w_1,w_2} \otimes \sigma_{w_2} \;x \;\sigma^\#_{w_1} \in M_{|w|}(\C) \otimes \mcal A \Lambda_{w,w} \cong  M_{|w|}(\C)\otimes \C[\Z].
\]
\end{proof}

\subsection{Words of length $ 1 $}

% with a chosen set $ \mcal I $ of representatives of equivalence classes of simple objects.
For a rigid C*-tensor category $\mcal{C}$, we let $\textbf{S}(\mcal{C}):=\{[a]\in\Irr(\mcal{C})\ :\ N^{a}_{b\overline{b}}\ne 0\ \text{for some}\ [b]\in \Irr(\mcal{C})\}$.
$\textbf{S}(\mcal{C})$ tensor generates the \textit{adjoint subcategory} of $\mcal{C}$, which is the trivial graded component with respect to the universal grading group, but in general $\textbf{S}(\mcal{C})$ gives a proper subset of the simple objects in the adjoint subcategory.

%We say $ w $ is of $ T $\textit{-type w.r.t to} $ v $, if $ v $ is a non-trivial simple object such that $ v\bar{v} $ contains $ w $ as a subobject, i.e., Hom($ v,wv $)$ \neq \{0\} $.
\comments{Let $ w \in \Lambda$ of length 1.
So, $ w \in [\t{Irr(}\mcal C\t{)}]\backslash\{1\} $ or $ w \in [\t{Irr(}\mcal D\t{)}]\backslash\{1\} $.
It is said to be of $ T$\textit{-type} w.r.t $ v $, if there exists $ v $ in $ [\t{Irr(}\mcal C\t{)}]\backslash\{1\} $ if $ w \in [\t{Irr(}\mcal C\t{)}]\backslash\{1\} $, and in $ [\t{Irr(}\mcal D\t{)}]\backslash\{1\} $ if $ [\t{Irr(}\mcal D\t{)}]\backslash\{1\} $ such that $ w $ is a sub-object of $ v\tilde{v} $ i.e., Hom$ (v,wv) \neq \{0\}$.
}

\begin{lem}\label{adjointweight0}
Let $w\in \IC$.  Then $ \mcal  A\Lambda_{\emptyset,w} \neq \{0\}$ if and only if $ w $ belongs to $ \textbf{S}(\mcal{C}) $.
The same holds replacing $\mcal{C}$ with $\mcal{D}$.
\end{lem}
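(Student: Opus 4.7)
The plan is to first rephrase the condition $\mcal{A}\Lambda_{\emptyset,w}\neq\{0\}$ in terms of sub-object relations. Using the adjunction $\Hom(a, w\otimes a)\cong \Hom(a\otimes \bar{a}, w)$ available in rigid $\mcal{C}*\mcal{D}$, for any simple $a\in \mcal{C}*\mcal{D}$ we have $\dim \mcal{A}\Lambda^{a}_{\emptyset,w}=N^{w}_{a\bar{a}}$. Hence $\mcal{A}\Lambda_{\emptyset,w}\neq\{0\}$ if and only if $w$ is a sub-object of $a\otimes\bar{a}$ for some simple object $a\in \mcal{C}*\mcal{D}$.

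The ``if'' direction is then immediate: if $w\in \textbf{S}(\mcal{C})$, pick $b\in \Irr(\mcal{C})$ with $w\subseteq b\otimes \bar{b}$ in $\mcal{C}$. Because $\mcal{C}$ sits as a full $*$-tensor subcategory of $\mcal{C}*\mcal{D}$ (and the dual of $b$ computed inside $\mcal{C}$ coincides with the one in $\mcal{C}*\mcal{D}$), the same sub-object relation holds in $\mcal{C}*\mcal{D}$, giving $\mcal{A}\Lambda^{b}_{\emptyset,w}\neq\{0\}$.

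For the ``only if'' direction, the task is to describe the length-$1$ simple sub-objects of $a\otimes \bar{a}$ for $a\in\Irr(\mcal{C}*\mcal{D})$. The cases $a=\mathbbm{1}$ and $|a|=1$ are immediate: the former gives $a\bar a=\mathbbm{1}$ and contributes no non-trivial sub-object, while for $a\in\IC\cup\ID$ the object $a\bar{a}$ lies entirely in $\mcal{C}$ or $\mcal{D}$ and a length-$1$ sub-object $w\in\IC$ forces $a\in\Irr(\mcal{C})$ and $w\subseteq a\bar{a}$, i.e.\ $w\in \textbf{S}(\mcal{C})$. For $a=a_{1}\cdots a_{n}$ with $n\geq 2$, I would decompose $a\otimes\bar{a}=a_{1}\cdots a_{n}\bar{a}_{n}\cdots\bar{a}_{1}$ by applying the fusion rule $a_{n}\otimes\bar{a}_{n}\cong \mathbbm{1}\oplus \bigoplus_{c\neq \mathbbm{1}}N^{c}_{a_{n}\bar{a}_{n}}\,c$ at the innermost pair. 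The $\mathbbm{1}$-component collapses the word to $a_{1}\cdots a_{n-1}\bar{a}_{n-1}\cdots \bar{a}_{1}$, on which I recurse. Each non-trivial $c$, since it lies in the category opposite to that of $a_{n-1}$, yields an alternating (hence simple) word $a_{1}\cdots a_{n-1}c\,\bar{a}_{n-1}\cdots \bar{a}_{1}$ of odd length $2n-1$.

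Unwinding the recursion, the simple sub-objects of $a\otimes \bar{a}$ are precisely $\mathbbm{1}$ together with words of odd lengths $2k-1$ for $1\leq k\leq n$, having the form $a_{1}\cdots a_{k-1}\,c_{k}\,\bar{a}_{k-1}\cdots \bar{a}_{1}$ with $c_{k}$ a non-trivial simple sub-object of $a_{k}\otimes \bar{a}_{k}$. In particular, the length-$1$ simple sub-objects correspond to $k=1$, i.e.\ to non-trivial simple sub-objects of $a_{1}\otimes \bar{a}_{1}$ in the category containing $a_{1}$. Thus if $w\in\IC$ is a sub-object of $a\bar{a}$, then necessarily $a_{1}\in\Irr(\mcal{C})$ and $w\subseteq a_{1}\otimes\bar{a}_{1}$, which is exactly the condition $w\in \textbf{S}(\mcal{C})$. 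The main technical point to be careful about is verifying alternation (and hence simplicity) of the intermediate words $a_{1}\cdots a_{k-1}c_{k}\bar{a}_{k-1}\cdots\bar{a}_{1}$, which is a direct parity check using that $c_{k}$ sits in the opposite category from $a_{k-1}$.
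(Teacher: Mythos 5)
Your proof is correct and follows essentially the same route as the paper: both use Frobenius reciprocity to reduce the question to finding length-$1$ simple sub-objects of $v\otimes\bar{v}$, and both exploit the fact that in the free product the non-trivial sub-objects coming from the inner part of the word $v\bar{v}$ produce only simple words of length $\geq 3$, so that a length-$1$ sub-object must arise from the trivial component, i.e.\ from $a_{1}\otimes\bar{a}_{1}$. The only difference is cosmetic: you carry out the full recursive decomposition of $a\bar{a}$, whereas the paper peels off just the outermost letter and cites the $\mcal{D}$-$\mcal{D}$ type of the non-trivial sub-objects of the inner word.
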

\begin{proof}
Suppose $ w \in \textbf{S}(\mcal C )$. then there is a simple $ v $ such that $\{0\} \neq  \left(\mcal{C}*\mcal{D}\right)(v,wv) = \mcal A\Lambda_{\emptyset,w}^v $ implying, $ \mcal A\Lambda_{\emptyset,w} \neq \{0\}$.
	
Now suppose $ \mcal A\Lambda_{\emptyset,w} \neq \{0\}$. 
Choose $ v \in  \Irr(\mcal{C}*\mcal{D})\setminus\{ \mathbbm{1}\} $ such that $  \mcal A\Lambda_{\emptyset,w}^{v} =\left(\mcal{C}*\mcal{D}\right)(v,wv) \neq \{0\} $.
By arguments as in the proof of  Lemma \ref{equiv}, one can see that $ v $ must be of $\mcal C$-$\mcal C$ or $ \mcal{C}$-$\mcal{D}$ type for the morphism space to be non-zero.
Let $ v= cv' $ with $ c\in \IC$.
If $ v' = \mathbbm{1} $, then we are done.
Suppose $ \abs{v'} \geq 1 $; so, $ v' $ starts in $ \ID $.
Consider the simple objects $ \{u_i: i=0,1, \ldots n\} \subset \Irr(\mcal{C}*\mcal{D}) $ that appear as subobjects in the decomposition of $ v'\bar{v'} $, with $ u_0 = \mathbbm 1 $.
Note that, for $ i\geq 1 $,  $ u_i $ is non-trivial and is of $ \mcal{D} $-$ \mcal{D} $ type (since $ v' $ is simple).
Thus, for all $ i\geq 1 $, $ cu_i \bar{c}$ is simple and is of length greater than $ 1 $, implying $ \left(\mcal{C}*\mcal{D}\right)(w,cu_i \bar{c}) =\{0\} $.
Since $ \{0\}\neq \left(\mcal{C}*\mcal{D}\right)(v,wv) \cong \left(\mcal{C}*\mcal{D}\right)(c\,v'\,\bar{v'}\, \bar{c} ,w) $, we must have $ \mcal{C}(c\bar{c} , w)=\left(\mcal{C}*\mcal{D}\right)(c\bar{c} , w) \neq \{0\} $.
So $ w \in \textbf{S}(C)$.
\end{proof}

For the statement of the next lemma, for $c\in \IC$, note that since $\mcal{C}$ is a full subcategory of $\mcal{C}*\mcal{D}$, we can view $\mcal{AC}_{c, \mathbbm{1}}\subseteq \mcal{A}\Lambda_{c, \emptyset}$.  Similarly for $d\in \ID$.

\begin{lem}\label{adjointcd}

If $c\in \IC \subseteq \Lambda$ and $d\in \ID\subseteq \Lambda$, then $\mcal{A}\Lambda_{c,d}\ne 0$ if and only if $c\in \textbf{S}(\mcal{C})$ and $d\in \textbf{S}(\mcal{D})$.
Furthermore $\mcal{A}\Lambda_{c,d}=\mcal{A}\mcal{D}_{d,\mathbbm{1}}\cdot \mcal{A}\Lambda_{\emptyset,\emptyset}\cdot \mcal{A}\mcal{C}_{c,\mathbbm{1}}$.
\end{lem}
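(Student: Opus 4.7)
The plan is to handle the nonvanishing criterion and the product formula in parallel via a case analysis on simples $a \in \Irr(\mcal{C}*\mcal{D})$ contributing to $\mcal{A}\Lambda_{c,d} = \bigoplus_{a} (\mcal{C}*\mcal{D})(a \otimes c, d \otimes a)$, in the style of Lemmas \ref{equiv} and \ref{adjointweight0}. First I would rule out all types of $a$ except $\mcal{D}$-$\mcal{C}$ type words of positive even length. The case $a = \mathbbm{1}$ gives $(\mcal{C}*\mcal{D})(c, d) = 0$ since $c$ and $d$ are nonisomorphic simples; if $a$ is of $\mcal{C}$-$\mcal{C}$ or $\mcal{D}$-$\mcal{D}$ type (including length $1$), the simples appearing in $a \otimes c$ and $d \otimes a$ have incompatible lengths because one side decomposes in $\mcal{C}$ or $\mcal{D}$ while the other merely concatenates; and if $a$ is of $\mcal{C}$-$\mcal{D}$ type, $a \otimes c$ is a simple $\mcal{C}$-$\mcal{C}$ word while $d \otimes a$ is a simple $\mcal{D}$-$\mcal{D}$ word.

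In the surviving case $a = e_1 b_1 \ldots e_k b_k$ with $e_i \in \ID$, $b_j \in \IC$ nontrivial, I would decompose the end factors as $b_k \otimes c = \bigoplus_x x$ in $\mcal{C}$ and $d \otimes e_1 = \bigoplus_y y$ in $\mcal{D}$. The simples appearing in $a \otimes c$ are then words $e_1 b_1 \ldots e_k x$ while those in $d \otimes a$ are $y b_1 e_2 \ldots b_k$; matching forces $x = b_k$ and $y = e_1$, establishing $c \in \textbf{S}(\mcal{C})$ and $d \in \textbf{S}(\mcal{D})$ as necessary, and giving the dimension formula
\[
\dim \mcal{A}\Lambda^a_{c,d} \; = \; \dim \mcal{C}(b_k \otimes c, b_k) \cdot \dim \mcal{D}(d \otimes e_1, e_1).
\]
The converse direction is then witnessed by taking $a = e \overline{b}$ for $b \in \IC$ with $c$ a direct summand of $b \otimes \overline{b}$ and $e \in \ID$ with $d$ a direct summand of $e \otimes \overline{e}$.

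For the product formula, the preceding dimension computation suggests an explicit basis of each summand $\mcal{A}\Lambda^a_{c,d}$ constructed graphically in $\mcal{NCP}$: for $\phi \in \mcal{C}(b_k \otimes c, b_k)$ and $\psi \in \mcal{D}(d \otimes e_1, e_1)$, the tensor $(\psi \otimes 1_{b_1 e_2 \ldots b_k}) \circ (1_{e_1 b_1 \ldots e_k} \otimes \phi)$ yields such a basis element of $(\mcal{C}*\mcal{D})(a \otimes c, d \otimes a)$. I would then verify that this morphism coincides with the tube algebra product of $\phi \in \mcal{A}\mcal{C}_{c, \mathbbm{1}}$, $1_u \in \mcal{A}\Lambda^u_{\emptyset, \emptyset}$, and $\psi \in \mcal{A}\mcal{D}_{d, \mathbbm{1}}$, where $u := b_1 e_2 \ldots e_k \in \Irr(\mcal{C}*\mcal{D})$ (taken as $\emptyset$ when $k = 1$) is the ``middle'' simple object, with the $\#$-operation used on $\psi$ or $\phi$ as needed so source and target subscripts align for composition in $\mcal{A}\Lambda$. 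The main obstacle will be the bookkeeping in the tube algebra multiplication formula, tracking the via-objects in the sum over $c$ and orthonormal bases $u$ appearing in the definition and confirming that exactly one summand recovers the graphical morphism in $\mcal{A}\Lambda^a_{c,d}$. Letting $u$ range over all $\mcal{C}$-$\mcal{D}$ type simples (including the empty word) then produces every element of $\mcal{A}\Lambda_{c,d}$ as such a product, giving the $\subseteq$ containment; the reverse containment is automatic from the tube algebra structure.
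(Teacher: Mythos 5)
Your proposal is correct and follows essentially the same route as the paper: a case analysis on the type of the via-object that eliminates everything except $\mcal{D}$-$\mcal{C}$ words, followed by an analysis of the surviving case to extract both the $\textbf{S}(\mcal{C})$, $\textbf{S}(\mcal{D})$ conditions and the factorization through weight $\emptyset$. The only differences are cosmetic: the paper obtains the endpoint conditions via Frobenius reciprocity applied to $\bar{v}dv$ rather than by matching simple summands of $a\otimes c$ and $d\otimes a$, and it deduces the product formula from the non-crossing-partition factorization of morphisms rather than from your dimension count, but these are interchangeable executions of the same idea.
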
 
\begin{proof}
If $ c \in \IC $ and $ d\in \ID $, choose $ a \in \IC$ and $b\in \ID$ such that $ c $ and $ d $ are subobjects of $\bar{a} a$ and $ b \bar{b} $ in $ \mcal C $ and $ \mcal D $ respectively.
Let $ 0 \neq y_1 \in \mcal C (ac,a)$, $ 0 \neq y_2 \in \mcal D (b,db)$.
Note that $ (y_2\otimes 1_{v_1})(1_{v_2}\otimes y_1) \in  \left(\mcal{C}*\mcal{D}\right) (bac,dba)=\mcal A\Lambda^{ba}_{c,d} \subset \mcal A\Lambda_{c,d}$ is nonzero.

Conversely, let $ \mcal A\Lambda_{c,d}\neq \{0\} $.
Then there exists a non-unit simple object $ v\in \Irr(\mcal{C}*\mcal{D}) $ such that $\left(\mcal{C}*\mcal{D}\right)(vc,dv)=\mcal A\Lambda_{c,d}^{v}  \neq \{0\} $.
If $ v $ is of $ \mcal{C}$-$\mcal{C}$ (resp. $\mcal{D}$-$\mcal{D} $) type, then $\left(\mcal{C}*\mcal{D}\right)(vc,dv) =\{0\} $ as $ dv $ (resp. $ vc $) is simple of $\mcal{D}$-$\mcal{C}$ type, and any simple subobject of $ vc $ (resp. $ dv $) in $\mcal{C}*\mcal{D}$ has length smaller than that of $ dv $ (resp. $ vc $). 
Now suppose $ v $ is of $\mcal{C}$-$\mcal{D}$ type; then, both $ vc $ and $ dv $ are simple with the same length but are of different types, hence $\left(\mcal{C}*\mcal{D}\right)(vc,dv) =\{0\} $.
Thus $ v $ can only be of $\mcal{D}$-$\mcal{C}$ type.
Also since $ v\neq \mathbbm{1} $, length of $ v $ is at least 2.

Let $ v=d^{\prime}v'c^{\prime} $, where $ d^{\prime} \in \ID, c^{\prime}\in \IC$ and $ v' \in \Irr(\mcal{C}*\mcal{D})$ is either trivial or $ \mcal{C} $-$ \mcal{D} $ type.
Consider $ \bar{v} d^{\prime}v = \bar{c^{\prime}} \, \bar{v'} \, \bar{d^{\prime}} \, d \, d^{\prime} \, {v'} \, c^{\prime}$.
If $ \bar{d^{\prime}}dd^{\prime} $ does not contain $ \mathbbm{1} $ as a subobject, then the length of every simple subobject of $ \bar{v} dv $ is strictly greater than $ 1 $, and thereby $\left(\mcal{C}*\mcal{D}\right)(vw_1,w_2v) \cong \left(\mcal{C}*\mcal{D}\right)(w_1,\bar{v}w_2v ) =\{0\} $ which is a contradiction.
Thus, $ \mathbbm{ 1} $ appears as a subobject of $\bar{d^{\prime}} dd^{\prime} $
% contains the trivial object, i.e., $ \mcal D(d,w_2d)\cong \mcal D(\mathds{1},\bar{d}w_2d)=\mcal E(\mathds{1},\bar{d}w_2d) \neq \{0\} $
and hence $ d \in \textbf{S}(\mcal{D}) $.
Similarly, by considering $ v c \bar{v}$, one may deduce that $ c \in \textbf{S}(\mcal{C})$.

For the last part, let $ v = d^{\prime} v' c^{\prime} $ be as above.
Then $ vc = d^{\prime} v' c^{\prime}  c$ and $ d v = d d^{\prime} v' c^{\prime} $.  Since $v^{\prime}$ is a word of $\mathcal{C}$-$\mathcal{D}$ type of length at least 2 whose letters are all simple, by the definition of the free product category, any morphism  $ x\in \left(\mcal{C}*\mcal{D}\right)(vc, dv)$ factorizes as $x_{1}\otimes 1_{v^{\prime}}\otimes x_{2}$, where $x_{1}\in \mcal{D}(d^{\prime}, d d^{\prime})$ and $x_{2}\in \mcal{C}(c^{\prime}, c c^{\prime})$.
The result then follows. 
\end{proof}

\begin{lem}\label{adjointcc}
Suppose $ c_1,c_2 \in \IC$.
If $ v \in \Irr(\mcal{C}*\mcal{D}) $ and $ \abs v \geq 1 $, then the space $ \mcal A \Lambda^v_{c_1,c_2}\ne \{0\}$ implies  $ v $ is of $ \mcal{C}$-$ \mcal{C}$ type.  Furthermore, we have
\begin{itemize}
\item[(i)]   If $\abs{v}=1$, then $v\in \IC$ and $\mcal A \Lambda^v_{c_1,c_2}=\mcal A \mcal{C}^{v}_{c_1, c_2}$.
\item[(ii)] If $ \abs v \geq 2$  then $\mcal A \Lambda^v_{c_1,c_2}\ne 0$ implies both $ c_1$ and $c_2 $ lie in $\textbf{S}(\mcal{C})$.  Furthermore, $ \mcal A \Lambda^v_{c_1,c_2}=\mcal A \mcal C_{\mathbbm{1},c_2} \cdot \mcal A \Lambda_{\emptyset,\emptyset} \cdot \mcal A \mcal C_{c_1, \mathbbm{1}} $.
\end{itemize}

The same statement holds, replacing $\mcal{C}$ with $\mcal{D}$.

\end{lem}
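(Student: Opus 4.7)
The plan is to constrain the type of $v$ by case analysis, then handle case (i) via full faithfulness of $\mcal{C}\hookrightarrow \mcal{C}*\mcal{D}$, and reduce case (ii) to a computation in $\mcal{C}$ via Frobenius reciprocity combined with the simplicity of the middle subword of $v$. For the type constraint, arguing as in Lemmas \ref{equiv} and \ref{adjointcd}: if $v$ is of $\mcal{C}$-$\mcal{D}$ type then $vc_1$ is alternating and hence simple of length $|v|+1$ while every simple subobject of $c_2v$ has length at most $|v|$; the $\mcal{D}$-$\mcal{C}$ case is symmetric; and if $v$ is of $\mcal{D}$-$\mcal{D}$ type then $vc_1$ and $c_2v$ are both simple of equal length but of different types ($\mcal{D}$-$\mcal{C}$ versus $\mcal{C}$-$\mcal{D}$). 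So $v$ must be $\mcal{C}$-$\mcal{C}$ type. Case (i) is then immediate from the full embedding $\mcal{C}\hookrightarrow \mcal{C}*\mcal{D}$: for $|v|=1$, $v\in\IC$, both $vc_1$ and $c_2v$ sit in $\mcal{C}\subset \mcal{C}*\mcal{D}$, and $(\mcal{C}*\mcal{D})(vc_1,c_2v)=\mcal{C}(vc_1,c_2v)=\mcal{A}\mcal{C}^v_{c_1,c_2}$.

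For case (ii), alternation with $|v|\geq 2$ forces $|v|$ odd and at least $3$, so I write $v=c\tilde{v}c'$ with $c,c'\in\IC$ and $\tilde{v}$ a $\mcal{D}$-$\mcal{D}$ alternating word of odd length $\geq 1$; in particular $\tilde{v}$ is a simple object of $\mcal{C}*\mcal{D}$. Two applications of Frobenius reciprocity give
\[
\mcal{A}\Lambda^v_{c_1,c_2}=(\mcal{C}*\mcal{D})(c\tilde{v}c'c_1,\;c_2c\tilde{v}c')\;\cong\;(\mcal{C}*\mcal{D})(\tilde{v}\otimes Y,\;X\otimes \tilde{v})
\]
with $X:=\bar{c}c_2c$ and $Y:=c'c_1\bar{c'}$ in $\mcal{C}$. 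Decomposing $X=\bigoplus X_j$ and $Y=\bigoplus Y_i$ into simples, the same kind of length/type comparison as above shows that whenever $X_j$ or $Y_i$ is non-trivial, the simples $\tilde{v}Y_i$ and $X_j\tilde{v}$ in $\mcal{C}*\mcal{D}$ have mismatched length or type; only the pair $X_j=Y_i=\mathbbm{1}$ contributes, giving $\C\cdot 1_{\tilde{v}}$ by simplicity of $\tilde{v}$. Therefore
\[
\mcal{A}\Lambda^v_{c_1,c_2}\;\cong\;\mcal{C}(\mathbbm{1},X)\otimes\mcal{C}(Y,\mathbbm{1})\;\cong\;\mcal{C}(c,c_2c)\otimes\mcal{C}(c'c_1,c'),
\]
which is non-zero exactly when $c_2\in\textbf{S}(\mcal{C})$ (via $c$) and $c_1\in\textbf{S}(\mcal{C})$ (via $c'$), establishing the necessary condition.

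Under the isomorphism above, the pure tensor $\alpha\otimes\beta$ corresponds to the morphism $\alpha\otimes 1_{\tilde{v}}\otimes \beta:vc_1\to c_2v$ in $\mcal{C}*\mcal{D}$. To realize this as an annular product, take $\alpha\in \mcal{A}\mcal{C}^c_{\mathbbm{1},c_2}$, $g:=1_{\tilde{v}}\in\mcal{A}\Lambda^{\tilde{v}}_{\emptyset,\emptyset}$, and $\beta\in\mcal{A}\mcal{C}^{c'}_{c_1,\mathbbm{1}}$; since both $c\otimes\tilde{v}$ and $c\tilde{v}\otimes c'=v$ are simple in $\mcal{C}*\mcal{D}$, the orthonormal bases appearing in the multiplication formulas for $\alpha\cdot g$ and $(\alpha\cdot g)\cdot \beta$ each collapse to a single identity, yielding $\alpha\cdot g\cdot \beta=\alpha\otimes 1_{\tilde{v}}\otimes \beta$ as an element of $\mcal{A}\Lambda^v_{c_1,c_2}$. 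This gives $\mcal{A}\Lambda^v_{c_1,c_2}\subseteq \mcal{A}\mcal{C}_{\mathbbm{1},c_2}\cdot \mcal{A}\Lambda_{\emptyset,\emptyset}\cdot \mcal{A}\mcal{C}_{c_1,\mathbbm{1}}$, and combined with the trivial reverse containment on the $v$-summand of the product, yields the claimed factorization. I expect the main technical hurdle to be the morphism space computation for $(\mcal{C}*\mcal{D})(\tilde{v}Y, X\tilde{v})$; this is where the simplicity of $\tilde{v}$ as an alternating word in $\mcal{C}*\mcal{D}$ is used decisively to collapse free-product morphism data onto $\mcal{C}$-level data.
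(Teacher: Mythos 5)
Your proof is correct, and for the part that carries the real content --- item (ii) --- it takes a noticeably different route from the paper. The type constraint and item (i) are argued exactly as in the paper (length/type comparison of $vc_1$ versus $c_2v$, then fullness of $\mcal{C}\hookrightarrow\mcal{C}*\mcal{D}$). For (ii), the paper writes $v=c_1'v'c_2'$ and simply asserts, ``from the definition of the free product category,'' that every morphism in $\left(\mcal{C}*\mcal{D}\right)(vc_1,c_2v)$ factorizes as $x_1\otimes 1_{v'}\otimes x_2$ --- i.e.\ it appeals directly to the non-crossing-partition description of morphisms in $\mcal{NCP}$ and the simplicity of the letters of $v'$. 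You instead avoid re-opening the NCP construction: two applications of Frobenius reciprocity reduce the space to $\left(\mcal{C}*\mcal{D}\right)(\tilde{v}\otimes Y, X\otimes\tilde{v})$ with $X,Y\in\mcal{C}$, and then semisimplicity plus the same length/type comparisons already used for the type constraint show that only the trivial summands of $X$ and $Y$ contribute, giving the dimension count $\dim\mcal{C}(\mathbbm{1},X)\cdot\dim\mcal{C}(Y,\mathbbm{1})$ and hence both the necessity of $c_1,c_2\in\So(\mcal{C})$ and the identification with $\mcal{C}(c,c_2c)\otimes\mcal{C}(c'c_1,c')$. What each approach buys: the paper's is shorter but leaves the factorization as a black box resting on the NCP calculus; yours derives the same conclusion purely from the fusion rules of $\mcal{C}*\mcal{D}$ established earlier, and in addition makes explicit the realization of $\alpha\otimes 1_{\tilde v}\otimes\beta$ as the annular product $\alpha\cdot 1_{\tilde v}\cdot\beta$ (using simplicity of $c\tilde v$ and $v$ to collapse the orthonormal-basis sums), a step the paper compresses into ``the result then follows.''
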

\begin{proof}
Let $ c_1,c_2 \in \IC$.  And suppose $ \mcal  A\Lambda^{v}_{c_1,c_2} \neq \{0\} $, for $\abs{v}\ge 1$.

If $ v $ is of $\mcal{C}$-$\mcal{D}$ type or $ \mcal{D}$-$\mcal{C}$ type, then $ vc_1 $ (respectively, $ c_2v $) is simple, and any simple subobject of $ c_2v $ (respectively $ vc_1 $) will have length strictly smaller than that of $ vc_1 $ (respectively $ c_2v $).
Hence $ \mcal A\Lambda^v_{c_1,c_2}= \left(\mcal{C}*\mcal{D}\right)(vc_1,c_2v)=\{0\} $.
Again, we can rule out $ v $ being $ \mcal{D} $-$ \mcal{D} $ type by comparison of the two simple objects $ v c_1 $ and $ c_2 v $, which cannot be equal since one starts with $\mcal{D}$ while the other starts with $\mcal{C}$.

For $(i)$, note that for $\abs{v}=1$ and $\mcal A \Lambda^v_{c_1,c_2}\ne \{0\}$, we must have $v\in \IC$ and in this case we see that $\mcal A \Lambda^v_{c_1,c_2}=\left(\mcal{C}*\mcal{D}\right)(vc_{1},c_{2}v)=\mcal{C}(vc_{1},c_{2}v)=\mcal{A} \mcal{C}^{v}_{c_1, c_2}$.

For $(ii)$, suppose we have $\mcal A \Lambda^v_{c_1,c_2}\ne\{0\}$, with $\abs{v}\ge 2$.
By the first part of the lemma, $v$ is of $\mcal{C}$-$\mcal{C}$ type, and hence we have $v=c^{\prime}_{1}v^{\prime} c^{\prime}_{2}$, where $v^{\prime}$ is a simple word of $\mcal{D}$-$\mcal{D}$ type of length $\ge 1$.
Thus we see that for any $x\in \left(\mcal{C}*\mcal{D}\right)(vc_{1}, c_{2}v)=\left(\mcal{C}*\mcal{D}\right)(c^{\prime}_{1}v^{\prime} c^{\prime}_{2}c_{1}, c_{2}c^{\prime}_{1}v^{\prime} c^{\prime}_{2})$, from the definition of the free product category we must have $x_{1}\in \mcal{C}(c^{\prime}_{1}, c_{2} c^{\prime}_{1})$ and $x_{2}\in \mcal{C}(c^{\prime}_{2} c_{1}, c^{\prime}_{2})$ so that $x$ factorizes as $x=x_{1}\otimes 1_{v^{\prime}}\otimes x_{2}$.
This gives us $(ii).$
\end{proof}
\comments{
\begin{rem}\label{side pat len 1}
A closer look at the proof of \Cref{t type}(1) gives the exact form of $ v $ for which $  \mcal A\Lambda_{\phi,w}^{v} $ is non-zero for $\abs w=1 $. 
\end{rem}
\begin{rem}\label{non t-type}
From Case 2 in the proof of \cref{t type}(2)(b)(ii), it follows that if at least one of $ w_1,w_2 $ is not in Ad($ \mcal C $) (resp. Ad($ \mcal D $)), then $ \mcal A\Lambda_{w_1,w_2}= \mcal A\mcal C_{w_1,w_2} $ (resp. $ \mcal A\Lambda_{w_1,w_2}= \mcal A\mcal D_{w_1,w_2} $).	
\end{rem}
}
\section{Annular representations of free product of categories}

Let $\mcal{C}$ be an arbitrary rigid $ C^* $-tensor category, and $\Gamma\subseteq [\t{Obj }\mcal{C}]$ be an arbitrary weight set containing $\mathbbm{1}$, which is sufficiently full to generate a universal C*-algebra.
Consider the ideal  $\mcal{J}\Gamma_{0}:=\mcal{A}\Gamma\cdot \mcal{A}\Gamma_{\mathbbm{1},\mathbbm{1}}\cdot \mcal{A}\Gamma$
in $ \mcal A\Gamma $ generated by $ \mcal A\Gamma_{\mathbbm{1},\mathbbm{1}} $.
In the particular case of $ \Gamma = \Irr(\mcal C) $, we write $ \mcal J\mcal C_0 $ for $ \mcal{J}\Gamma_{0} $.

Any bounded $*$-representation of $\mcal{J}\Gamma_{0}$ defines a bounded $*$-representation of $\mcal{A}\Gamma$.
In fact, the induction functor $Ind_{0}:Rep(\mcal{J}\Gamma_{0})\rightarrow Rep(\mcal{A}\Gamma)$ is a fully faithful functor, and its image defines the full subcategory $Rep_{0}(\mcal{A}\Gamma)$ of representations generated by their weight $\mathbbm{1}$ space.
Furthermore, $Rep_{0}(\mcal{A}\Gamma)$ is precisely the category of \textit{admissible representations of the fusion algebra} with respect to $\Gamma$.

Consider the W*-category $Rep_{+}(\mcal{A}\Gamma):=Rep(\mcal{A}\Gamma / \mcal{J}\Gamma_{0})$ of representations of $\mcal{A}\Gamma$ which contain $\mcal{J}\Gamma_{0}$ in their kernel.
$ Rep_+(\mcal A\Gamma) $ is referred to as the category of \textit{higher weight representations}.
It consists of precisely the representations of $\mcal{A}\Gamma$ such that the projection $p_{\mathbbm{1}}\in \mathcal{A}\Gamma_{\mathbbm{1},\mathbbm{1}}$ acts by $0$.

Then, for any non-degenerate $*$-representation of $(\pi, \mcal H)\in Rep(\mcal{A}\Gamma)$, we can decompose $\mcal H$ as direct sum of subrepresentations $\mcal H_{0}\oplus \mcal H^{\perp}_{0}$, where $\mcal H_{0}:=[\pi(\mcal J\Gamma_{0})\mcal H]$ and $\mcal H^{\perp}_{0}$ is its orthogonal complement.
We can view $\mcal H_{0}\in Rep_{0}(\mcal{A}\Gamma)$ and $\mcal H^{\perp}_{0}\in Rep_{+}(\mcal{A}\Gamma)$.
Any representation of $\mcal{J}\Gamma_{0}$ and any representation of $\mcal{A}\Gamma_{+}$ are disjoint as representations of $\mcal{A}\Gamma$.
This discussion gives us the following proposition:

\begin{prop} For any sufficiently full weight set, $Rep(\mcal{A}\Gamma)\cong Rep_{0}(\mcal{A}\Gamma)\oplus Rep_{+}(\mcal{A}\Gamma)$.
\end{prop}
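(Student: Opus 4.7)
The approach is to exhibit the decomposition at the level of objects via the construction sketched in the paragraph immediately preceding the proposition, and then to verify the three things required to promote this object-level decomposition to an equivalence of W*-categories: the two summands are actually objects of the claimed subcategories, the decomposition is natural in morphisms, and there are no nonzero intertwiners between the two sides.

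First, for $(\pi,\mcal H)\in Rep(\mcal A\Gamma)$, set $\mcal H_0:=[\pi(\mcal J\Gamma_0)\mcal H]$ and take its orthogonal complement. Since $\mcal J\Gamma_0$ is a two-sided $*$-ideal in $\mcal A\Gamma$, the subspace $\pi(\mcal J\Gamma_0)\mcal H$ is stable under $\pi(\mcal A\Gamma)$, so $\mcal H_0$ is a subrepresentation; because $\pi$ is $*$-preserving and $\mcal J\Gamma_0$ is self-adjoint, $\mcal H_0^\perp$ is also a subrepresentation. By construction $\mcal J\Gamma_0$ acts non-degenerately on $\mcal H_0$, which identifies $\mcal H_0$ as a non-degenerate representation of $\mcal J\Gamma_0$, and via the equivalence $Ind_0:Rep(\mcal J\Gamma_0)\to Rep_0(\mcal A\Gamma)$ recalled before the proposition, places it in $Rep_0(\mcal A\Gamma)$. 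On the other side, for $x\in \mcal J\Gamma_0$ and $\xi\in \mcal H_0^\perp$, the vector $\pi(x)\xi$ lies both in $\mcal H_0$ (by definition of $\mcal H_0$) and in $\mcal H_0^\perp$ (by its invariance), so $\pi(x)\xi=0$; this shows $\mcal J\Gamma_0\subseteq \ker(\pi|_{\mcal H_0^\perp})$, so $\mcal H_0^\perp$ descends to a representation of $\mcal A\Gamma/\mcal J\Gamma_0$ and hence lies in $Rep_+(\mcal A\Gamma)$.

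Next, for an intertwiner $T:(\pi,\mcal H)\to (\sigma,\mcal K)$, the relation $T\pi(x)=\sigma(x)T$ immediately gives $T(\pi(\mcal J\Gamma_0)\mcal H)\subseteq \sigma(\mcal J\Gamma_0)\mcal K$ and hence $T(\mcal H_0)\subseteq \mcal K_0$ after taking closures; applying the same to the intertwiner $T^*$ in the other direction yields $T(\mcal H_0^\perp)\subseteq \mcal K_0^\perp$. Thus $T$ is block diagonal with respect to the decomposition, giving functoriality. Finally, disjointness of $Rep_0(\mcal A\Gamma)$ and $Rep_+(\mcal A\Gamma)$ follows because any intertwiner $S:\mcal V\to \mcal W$ with $\mcal V\in Rep_0$ and $\mcal W\in Rep_+$ satisfies $S\pi(x)\eta=\sigma(x)S\eta=0$ for all $x\in\mcal J\Gamma_0$, and vectors of the form $\pi(x)\eta$ are total in $\mcal V$, forcing $S=0$.

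The one genuinely non-formal input, which I would flag as the main technical point rather than obstacle, is the identification of the representation $\mcal H_0$ with $Ind_0$ applied to the $\mcal J\Gamma_0$-subrepresentation it restricts to; this uses that $Ind_0$ is a fully faithful equivalence onto $Rep_0(\mcal A\Gamma)$, which is asserted in the paragraph preceding the proposition and which in turn rests on standard facts about representations of non-unital $*$-algebras induced from ideals containing a full corner of the universal C*-algebra. Everything else is a direct verification using only that $\mcal J\Gamma_0$ is a two-sided $*$-ideal and that $\pi$ is a $*$-representation.
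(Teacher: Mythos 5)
Your proof is correct and follows essentially the same route as the paper, which simply records the decomposition $\mcal H=\mcal H_{0}\oplus\mcal H_{0}^{\perp}$ with $\mcal H_{0}=[\pi(\mcal J\Gamma_{0})\mcal H]$ and the disjointness of representations of $\mcal J\Gamma_{0}$ and of $\mcal A\Gamma/\mcal J\Gamma_{0}$, treating the proposition as an immediate consequence of that discussion. You have merely spelled out the routine verifications (invariance of the two summands, block-diagonality of intertwiners, vanishing of intertwiners between the summands) that the paper leaves implicit.
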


Thus, the problem of understanding $Rep(\mcal{A}\Lambda)$ decomposes into the problem of understanding the admissible representations of the fusion algebra, and the higher weight structure.
In the particular case of free products, what we will see is that the weight $0$ part is controlled by a free product C*-algebra, while the higher weight parts can be read off in terms of the higher weight parts of $\mcal{C}$ and $\mcal{D}$.  There are also some additional copies of the category $Rep(\Z)$ that appear at higher weights.

%%%%%%%%%%%%%%%%%%%%%%%%%%%%%%%%%%%%%%%%%%%%%%%%%%%%%%%%%%%%%%%%%%%%%%%%%%%%%%%%%%%%%%%%%%%%%%%%%%%%%%%%%%%%%%%

\comments{
We will define four different types of representations of $\mcal{A}\Lambda$.  We will show that representations of different types are disjoint, and that every representation is isomorphic to a direct sum of the different types.  Then an analysis of the different representations will allow us to identify the category $Rep(\mcal{A} \Lambda)$ as a direct sum of $4$ categories as stated in the introduction.

\begin{defn}\label{wt sup}
A representation $(\pi, \mcal H)  $ of $ \mcal A\Lambda $ is said to be 
\begin{itemize}
\item [(i)] \textit{weight} $ 0 $, if $ \mcal H_{\emptyset} $ generates $ (\pi, \mcal H) $

\medskip

\item [(ii)] \textit{weight $\rm{I}_{\mcal{C}}$}, if $\mcal H_{\emptyset}=0$, and $H$ is generated by $ \us {w\in \IC } \cup \mcal H_w $

\medskip

\item [(iii)] \textit{weight $\rm{I}_{\mcal{D}}$}, if $\mcal H_{\emptyset}=0$, and $H$ is generated by $ \us {w\in \ID } \cup \mcal H_w $

\medskip

\item [(iv)] \textit{weight $\rm{II}$}, if it is generated by $ \us {w\in \W} \cup \mcal H_w $.
\end{itemize}
\end{defn}

\begin{lem} 
Representations of different weights are disjoint. 
\end{lem}

\begin{proof}\label{disjoint}
Note that if a representation is generated by a particular weight space, then intertwiners are determined by their action on that weight space.
Thus clearly weight $0$ representations are disjoint from all the other weights.  Lemma \ref{adjointcd} implies representations of weight $\rm{I}_{\mcal{C}}$ and $\rm{I}_{\mcal{D}}$ are disjoint, since the induction bimodule $\mcal{A}\Lambda_{c,d}$ must factor through the weight $\mcal{0}$ space.  Furthermore, since $\mcal{A}\W$ is a direct summand of $\mcal{A}\Lambda$ by Lemma \ref{wt>1corner}, we have disjointness of weight $\rm{II}$ from the other weights.
\end{proof}

\begin{lem}\label{decomposition} Every representation $(H, \pi)\in Rep(\mcal{A}\Lambda)$ has canonical subrepresentations $H_{0}, H_{\rm{I}_{\mcal{C}}}, H_{\rm{I}_{\mcal{D}}}, H_{\rm{II}}$ of weight $0, \rm{I}_{\mcal{C}},\rm{I}_{\mcal{D}}$, and $\rm{II}$ respectively, and a direct sum decomposition $H\cong H_{0}\oplus H_{\rm{I}_{\mcal{C}}} \oplus H_{\rm{I}_{\mcal{D}}}\oplus H_{\rm{II}}$
\end{lem}

\begin{proof}
Let $H_{0}$ be the subrepresentation generated by the weight $0$ space which is weight $0$ by construction.  Set $H^{\perp}_{0}$ to be the orthogonal complement representation, and note that $(H^{\perp}_{0})_{\phi}=0$.  Let $H_{\rm{I}_{\mcal{C}}}:=[\pi(\mcal{A}\Lambda)\us {c\in \IC } \cup \mcal (H^{\perp}_{0})_{c}]$.  Then this representation is clearly weight $\rm{I}_{\mcal{C}}$ by construction.  But we claim that  $H_{\rm{I}_{\mcal{C}}}$ has no weights from $\Irr(\mcal{D})$ since $\mcal{A}\Lambda_{c,d}=\mcal{A}\Lambda_{\phi, d}\cdot \mcal{A}\Lambda_{\phi,\phi}\cdot \mcal{A}\Lambda_{c,\phi}$ by Lemma \ref{adjointcd}, hence $\pi(\mcal{A}\Lambda_{c,d})(H^{\perp}_{0})_{c}$ for any $c\in \IC$.  Since $\mcal{A}\W$ is a summand, it also contains no non-zero weight spaces of weight $\W$.

 Similarly, we take $H_{\rm{I}_{\mcal{D}}}:= [\pi(\mcal{A}\Lambda) (\us {d\in \ID } \cup \mcal (H^{\perp}_{0})_{d})]$, which by the same argument as above contains no weights in $\Irr(\mcal{C})$ or in $\W$.  Finally, we take $H_{\rm{II}}:=[\mcal{A}\W(H)]=[\mcal{A}\W(H^{\perp})]$ which by nondegeneracy, we see is also isomorphic to $H\ominus (H_{0}\oplus H_{\rm{I}_{\mcal{C}}}\oplus H_{\rm{I}_{\mcal{D}}}$, giving us the desired result.

\end{proof}

The above two lemmas immediately imply that we have a decomposition of the category $Rep(\mcal{A}\Lambda)$ into the direct sum of $4$ component categories, namely the categories of representations of distinct weights.  Let $Rep_{0}(\mcal{A}\Lambda),\ Rep_{\rm{I}_{\mcal{C}}}(\mcal{A}\Lambda),\ Rep_{\rm{I}_{\mcal{D}}}(\mcal{A}\Lambda)$, and $Rep_{\rm{II}}(\mcal{A}\Lambda)$ denote the W*-categories of representations with the designated weights.  Combining the above lemmas we have

\begin{cor}\label{directsumdecompose}
 As W*-categories, we have 

 $$Rep(\mcal{A}\Lambda)\cong Rep_{0}(\mcal{A}\Lambda)\oplus Rep_{\rm{I}_{\mcal{C}}}(\mcal{A}\Lambda)\oplus Rep_{\rm{I}_{\mcal{D}}}(\mcal{A}\Lambda)\oplus Rep_{\rm{II}}(\mcal{A}\Lambda)$$
\end{cor} 

Thus to understand $Rep(\mcal{A})\cong Rep(\mcal{A}\Lambda)$, it suffices to identify each of these summands.}

%%%%%%%%%%%%%%%%%%%%%%%%%%%%%%%%%%%%%%%%%%%%%%%%%%%%%%%%%%%%%%%%%%%%%%%%%%%%%%%%%%%%%%%%%%%%%%%%%%%%%%%%%%%%%%%
We first turn our attention to the weight $0$ case.
Let $\text{Fus}(\mcal{C})$ be the fusion algebra of $\mcal{C}$ with the distinguished basis $  \Irr(\mcal{C}) $.
Recall there exists a universal C*-algebra completion of the fusion algebra, denoted by $C^{*}_{u}(\mcal{C})$, first introduced by Popa and Vaes \cite{PV}, which is universal with respect to \textit{admissible representations}.
In \cite{GJ}, it was shown that $\mcal{A}\mcal{C}_{\mathbbm{1},\mathbbm{1}}\cong \text{Fus}(\mcal{C})$ and admissible representations are precisely those that induce bounded $*$-representations of the tube algebra, and thus $C^{*}_{u}(\mcal{C})$ can be viewed as the weight $0$ corner (or centralizer algebra) of the universal C*-algebra of the tube algebra. 

Via the inclusions of $\mcal{C}$ and $\mcal{D}$ into $\mcal{C}\ast\mcal{D}$, $ \text{Fus}\left(\mcal{C}\ast\mcal{D}\right) $ contains the fusion algebras $ \text{Fus}(\mcal{C}) $ and $\text{Fus}(\mcal{D})$ as unital $ * $-subalgebras.
Indeed, we have a canonical $ * $-algebra isomorphism between $\text{Fus}(\mcal{C}\ast\mcal{D})$ and the (algebraic) free product $\text{Fus}(\mcal{C})\ast\text{Fus}(\mcal{D})$.

We briefly recall the definition of (universal) free product of C*-algebras:

\begin{defn}
If $A_{1}$ and $A_{2}$ are unital C*-algebras, a \textit{free product} is a unital C*-algebra $A_{1}\ast A_{2}$, together with unital $*$-homomorphisms $\iota_{i}:A_{i}\rightarrow A_{1}\ast A_{2}$ satisfying the following universal property: for any unital C*-algebra $ C $ and unital $*$-homomorphisms $\gamma_{i}:A_{i}\rightarrow C$ there exists a unique $*$-homomorphism $\gamma_{1}\ast\gamma_{2}:A_{1}\ast A_{2}\rightarrow C$ such that $\left(\gamma_{1}\ast\gamma_{2}\right)\circ \iota_{i}=\gamma_{i}$.

\end{defn}

Any two free products of two C*-algebras are $ * $-isomorphic if they exists by the universal property.
Furthermore, free products \textit{do} exist, so it makes sense to talk about \textit{the} free product C*-algebra, which we will denote by $A_{1}\ast A_{2}$.

The main result of this section is the following:

\begin{prop}\label{freeprodweight0}
$C^{*}_{u}(\mcal{C}\ast\mcal{D})\cong C^{*}_{u}(\mcal{C})\ast C^{*}_{u}(\mcal{D})$.
\end{prop}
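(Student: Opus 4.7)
The plan is to establish the C*-algebra isomorphism via the universal properties of both sides. Algebraically, $\text{Fus}(\mcal{C}*\mcal{D})\cong\text{Fus}(\mcal{C})\ast\text{Fus}(\mcal{D})$ (every simple object of $\mcal{C}*\mcal{D}$ is a reduced alternating word), so both $C^{*}_{u}$-algebras in the statement are C*-completions of the same underlying $*$-algebra, and the task is to show that the two families of admissible representations used to form these completions agree.

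First I would build a canonical $*$-homomorphism $\Phi: C^{*}_{u}(\mcal{C})\ast C^{*}_{u}(\mcal{D}) \to C^{*}_{u}(\mcal{C}*\mcal{D})$. The full tensor subcategory inclusion $\mcal{C}\hookrightarrow\mcal{C}*\mcal{D}$ induces a $*$-algebra inclusion of tube algebras $\mcal{A}\mcal{C}\hookrightarrow\mcal{A}(\mcal{C}*\mcal{D})$, since tensor products of simples in $\mcal{C}$ remain in $\mcal{C}$ and the tube-algebra structure constants therefore agree. Consequently, if $\pi$ is an admissible representation of $\text{Fus}(\mcal{C}*\mcal{D})$ extending to a tube algebra representation $\tilde\pi$, then $\tilde\pi|_{\mcal{A}\mcal{C}}$ is a tube algebra representation whose weight-$\mathbbm{1}$ corner agrees with $\pi|_{\text{Fus}(\mcal{C})}$, so the restriction is admissible; similarly for $\mcal{D}$. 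By the universal property of the C*-algebra free product, the resulting maps $C^{*}_{u}(\mcal{C}), C^{*}_{u}(\mcal{D}) \to C^{*}_{u}(\mcal{C}*\mcal{D})$ yield the desired $\Phi$, and surjectivity is automatic because its image is dense.

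The heart of the proof is injectivity of $\Phi$, equivalently the converse direction: if $\rho_{\mcal{C}}:\text{Fus}(\mcal{C})\to B(H)$ and $\rho_{\mcal{D}}:\text{Fus}(\mcal{D})\to B(H)$ are admissible on a common Hilbert space, then the combined representation $\rho$ of $\text{Fus}(\mcal{C}*\mcal{D})$ is admissible. I would prove this by explicitly extending $\rho$ to the tube algebra $\mcal{A}(\mcal{C}*\mcal{D})$, following the induction strategy used in Lemma \ref{red wt}. For each simple $w\in\Irr(\mcal{C}*\mcal{D})$, let $\tilde H_w$ be the Hilbert space completion of
\[
\bigl(\mcal{A}(\mcal{C}*\mcal{D})_{\emptyset,w}\otimes H\bigr)/N_w,
\]
where $N_w$ is the null space of the sesquilinear form $\langle y_1\otimes\xi_1, y_2\otimes\xi_2\rangle_w:=\langle\rho(y_2^{\#}\cdot y_1)\xi_1,\xi_2\rangle$, and set $\tilde H:=\bigoplus_w\tilde H_w$ with the obvious left multiplication action of $\mcal{A}(\mcal{C}*\mcal{D})$.

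The main obstacle will be verifying positivity of the sesquilinear form on each $\tilde H_w$. Here I would invoke the structural analysis of Section 3: by Lemmas \ref{adjointweight0}, \ref{adjointcd}, and \ref{adjointcc}, the space $\mcal{A}(\mcal{C}*\mcal{D})_{\emptyset,w}$ is nonzero only when $w$ lies in the adjoint subcategory of $\mcal{C}*\mcal{D}$; for $|w|\geq 1$ the word $w$ is then of $\mcal{C}$-$\mcal{C}$ or $\mcal{D}$-$\mcal{D}$ type, and morphisms there factor through elementary pieces drawn from $\mcal{A}\mcal{C}$ and $\mcal{A}\mcal{D}$ together with the weight-$\mathbbm{1}$ corner. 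Using the tube algebra extensions $\tilde\rho_{\mcal{C}}, \tilde\rho_{\mcal{D}}$ granted by admissibility, the Gram matrix $(\rho(y_j^{\#}y_i))_{i,j}$ reduces to a Gram matrix in $\tilde\rho_{\mcal{C}}(\mcal{A}\mcal{C})$ or $\tilde\rho_{\mcal{D}}(\mcal{A}\mcal{D})$ according to the type of $w$, whose positivity is guaranteed by admissibility of $\rho_{\mcal{C}}$ and $\rho_{\mcal{D}}$. Once positivity is confirmed, $\tilde\rho$ provides the required tube algebra extension of $\rho$, and the proposition follows.
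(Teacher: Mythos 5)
Your proposal follows essentially the same route as the paper: both reduce the statement to showing that a representation of $\text{Fus}(\mcal{C}\ast\mcal{D})$ is admissible if and only if its restrictions to $\text{Fus}(\mcal{C})$ and $\text{Fus}(\mcal{D})$ are admissible (the paper's Lemma \ref{freeadmis}), prove the nontrivial direction by checking positivity of the inducing sesquilinear form on the weight spaces $\mcal{A}\Lambda_{\emptyset,w}$ using the factorization of morphisms from Section 3, and then conclude via the universal property of the C*-free product. The only difference worth noting is that you run the positivity check over all of $\Irr(\mcal{C}\ast\mcal{D})$, where the cited structural lemmas do not directly cover weights $w$ of length $\geq 2$ in the adjoint subcategory; the paper avoids this by first passing to the reduced weight set $\Lambda$ via Lemma \ref{red wt}, so that only $w=\emptyset$ and $|w|=1$ need to be treated.
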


To prove this, we already know that $ \mcal A \mcal C_{\mathbbm{1} , \mathbbm{1}} $, $ \mcal A \mcal D_{\mathbbm{1} , \mathbbm{1}} $ and $ \mcal A \Lambda_{\emptyset,\emptyset} $ are isomorphic to the fusion algebras $ \text{Fus}(\mcal{C}) $, $ \text{Fus}(\mcal{D}) $ and $\text{Fus}(\mcal{C}\ast \mcal{D})\cong \text{Fus}(\mcal{C})\ast\text{Fus}(\mcal{D})$ respectively.
Using these isomorphisms, any representation of the weight zero centralizer algebra $ \mcal A \Lambda_{\emptyset, \emptyset} $ can also be viewed as representations of $ \mcal A \mcal C_{\mathbbm{1} , \mathbbm{1}} $ and $ \mcal A \mcal D_{\mathbbm{1} , \mathbbm{1}} $ by restricting $ \pi $ to the corresponding subalgebras. 
We have the following lemma:

\begin{lem}\label{freeadmis}
A representation $ (\pi, \mcal H) $ of  $\text{Fus}(\mcal{C}\ast \mcal{D})$
% \cong \mcal F_e \cong \mcal F_c \ast \mcal F_d 
is admissible if and only if its restrictions $ (\pi^c,\mcal H) $ and $ (\pi^d,\mcal H) $ to $\text{Fus}(\mcal{C}) $ and $\text{Fus}(\mcal{D})$ are admissible respectively.
\end{lem}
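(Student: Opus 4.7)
The plan is to handle both directions, with the forward one being essentially formal and the reverse containing the main content. For the forward direction, admissibility of $(\pi, \mcal{H})$ means it extends to a representation of the tube algebra $\mcal{A}$ of $\mcal{C} \ast \mcal{D}$ (using the identification $\mcal{A}\Lambda_{\emptyset, \emptyset} \cong \t{Fus}(\mcal{C} \ast \mcal{D})$ together with Lemma \ref{red wt}). Since $\mcal{C}$ embeds as a full C*-tensor subcategory of $\mcal{C} \ast \mcal{D}$, the annular algebra $\mcal{A}\mcal{C}$ sits as a $*$-subalgebra of $\mcal{A}$ compatibly with the $\#$-structure and with the inclusion $\t{Fus}(\mcal{C}) \hookrightarrow \t{Fus}(\mcal{C} \ast \mcal{D})$. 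Restricting the extension of $\pi$ to $\mcal{A}\mcal{C}$ then provides the required extension of $\pi^c$, establishing its admissibility; the argument for $\pi^d$ is identical.

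For the reverse direction, admissibility of $\pi$ on $\mcal{A}\Lambda_{\emptyset, \emptyset}$ is equivalent, by \cite[Lemma 4.4]{GJ}, to every vector state $\omega_\xi$ being an annular state, i.e.\ $\omega_\xi(f^\# f) \geq 0$ for every $b \in \Lambda$ and every $f \in \mcal{A}\Lambda_{\emptyset, b}$. By Lemma \ref{uneq len}, this is automatic for $b \in \W$; for $b = \emptyset$ it is the tautological $f^* f \geq 0$ in the $*$-algebra $\t{Fus}(\mcal{C} \ast \mcal{D})$. By Lemma \ref{adjointweight0}, the remaining cases force $b \in \So(\mcal{C}) \cup \So(\mcal{D})$, and by symmetry we may assume $b \in \So(\mcal{C})$. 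The proof of Lemma \ref{adjointweight0} shows that each simple $v$ for which $\mcal{A}\Lambda^v_{\emptyset, b} \neq 0$ must start with a letter $c \in \IC$, and the NCP factorization of morphisms in $\mcal{C} \ast \mcal{D}$ then identifies each such $f_v \in \mcal{A}\Lambda^v_{\emptyset, b}$ with $g_v \otimes 1_{v''}$, where $g_v \in \mcal{C}(c, bc) \cong \mcal{A}\mcal{C}^c_{\mathbbm{1}, b}$ and $v = c v''$.

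The decisive step is then to verify that $f^\# f$, viewed in $\t{Fus}(\mcal{C} \ast \mcal{D})$, coincides with the image of $g^\# g$ under the inclusion $\t{Fus}(\mcal{C}) \hookrightarrow \t{Fus}(\mcal{C} \ast \mcal{D})$, where $g \in \mcal{A}\mcal{C}_{\mathbbm{1}, b}$ is assembled from the $g_v$'s. Granting this, $\omega_\xi(f^\# f) = \omega_\xi(g^\# g) \geq 0$ by admissibility of $\pi^c$. To establish the claim one chooses standard solutions $(R_v, \bar R_v)$ as tensor products of standard solutions for $c$ (in $\mcal{C}$) and for $v''$ (in $\mcal{C} \ast \mcal{D}$), uses fullness of $\mcal{C} \subseteq \mcal{C} \ast \mcal{D}$ to ensure orthonormal bases of $\mcal{C}$-morphism spaces remain orthonormal in $\mcal{C} \ast \mcal{D}$, and observes that cross terms $f_v^\# f_w$ for $v, w$ with differing tails vanish by the orthogonality built into the free product. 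The main obstacle will be the explicit bookkeeping in this step: one must match the annular product in $\mcal{A}\Lambda$, with its sums over orthonormal bases of simple sub-objects of $\bar v \otimes w$ in $\mcal{C} \ast \mcal{D}$, against the corresponding product in $\mcal{A}\mcal{C}$ over $\mcal{C}$-simples. The NCP factorization of morphisms between pure $\mcal{C}$-words, combined with the fullness of $\mcal{C} \subseteq \mcal{C} \ast \mcal{D}$, is precisely what makes this reduction succeed.
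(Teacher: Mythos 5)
Your reduction is the same as the paper's: only the weights $w=\emptyset$ and $w\in \So(\mcal{C})\cup\So(\mcal{D})$ need to be checked, and for $w\in\So(\mcal{C})$ every nonzero $f_i\in\mcal A\Lambda^{v_i}_{\emptyset,w}$ factors as $z_i\otimes 1_{u_i}$ with $v_i=c_iu_i$, $c_i\in\IC$ and $z_i\in\mcal{C}(c_i,wc_i)\cong\mcal{AC}^{c_i}_{\mathbbm{1},w}$. However, your ``decisive step'' rests on a false identity. Writing $f=\sum_i f_i$ with $f_i=z_i\cdot 1_{u_i}$ (product in $\mcal A\Lambda$, with $1_{u_i}\in\mcal A\Lambda^{u_i}_{\emptyset,\emptyset}$), one computes $1_{u}^{\#}=1_{\bar u}$, so
$$f^{\#}\cdot f=\sum_{i,j}\,[\overline{u_j}]\cdot\left(z_j^{\#}\cdot z_i\right)\cdot[u_i]\ \in\ \t{Fus}(\mcal{C}*\mcal{D}),$$
i.e.\ the $\t{Fus}(\mcal{C})$-elements $z_j^{\#}\cdot z_i$ appear conjugated by the fusion classes of the tails. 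Even a single summand $f=z\otimes 1_u$ with $u\neq\emptyset$ gives $f^{\#}f=[\bar u](z^{\#}z)[u]\neq z^{\#}z$ (the coefficient of $[\mathbbm{1}]$ in $z^{\#}z$ becomes a multiple of $[\bar u][u]=\sum_e N^{e}_{\bar u u}[e]$). The cross terms with $u_i\neq u_j$ do not vanish either: $[\overline{u_j}]\,y\,[u_i]$ is a nonzero combination of basis classes $[\overline{u_j}\,c\,u_i]$ whenever $y=z_j^{\#}z_i\neq 0$, so there is no orthogonality from the free product to invoke. Hence $f^{\#}f$ is not the image of $g^{\#}g$ and $\omega_\xi(f^{\#}f)=\omega_\xi(g^{\#}g)$ fails.

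The repair is the step the paper actually takes: do not try to identify $f^{\#}f$ with an element of $\t{Fus}(\mcal{C})$; instead absorb the tails into the vectors. Setting $\zeta_i:=\pi(1_{u_i})\xi$, one gets
$$\omega_\xi(f^{\#}\cdot f)=\sum_{i,j}\bigl\langle\pi\bigl(1_{u_j}^{\#}\cdot z_j^{\#}\cdot z_i\cdot 1_{u_i}\bigr)\xi,\xi\bigr\rangle=\sum_{i,j}\bigl\langle\pi^{c}\bigl(z_j^{\#}\cdot z_i\bigr)\zeta_i,\zeta_j\bigr\rangle\ \geq\ 0,$$
where positivity comes from admissibility of $\pi^{c}$ in its quadratic-form formulation (positivity of the operator matrix $\bigl(\pi^{c}(z_j^{\#}\cdot z_i)\bigr)_{i,j}$ applied to arbitrary vectors $\zeta_i$, not merely to a fixed vector state), which is what \cite{GJ} provides. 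Your forward direction and the fullness/orthonormal-basis bookkeeping for identifying $z_j^{\#}\cdot z_i$ computed in $\mcal{AC}$ with the same product computed in $\mcal A\Lambda$ are correct and are also implicit in the paper.
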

\begin{proof}
If $ (\pi, \mcal H) $ be admissible then, $ (\pi^c,\mcal H) $ and $ (\pi^d,\mcal H) $ are clearly admissible.
	
Suppose  $ (\pi^c,\mcal H) $ and $ (\pi^d,\mcal H) $ are admissible.
Set $ \mcal {\widehat H}_w:=\mcal A\Lambda_{\emptyset,w} \otimes \mcal H $ for $ w \in \Lambda $.
By Lemma \ref{uneq len} and Lemma \ref{adjointweight0}, $ \widehat{\mcal H}_w $ is nonzero only when $ w = \emptyset $  or $ w $ has length $ 1 $ and is in $ \textbf{S}(\mcal{C})\cup \textbf{S}(\mcal{D}) $.
As usual, we define a sesquilinear form on $ \widehat{\mcal H}_w $ by 
$$ \langle y_1\otimes\xi_1, y_2\otimes\xi_2\rangle_w:=\langle\pi(y_2^{\#}\cdot y_1)\xi_1,\xi_2\rangle $$
for $ y_1,y_2 \in \mcal A\Lambda_{\emptyset,w} $ and $ \xi_1,\xi_2 \in \mcal H $.
%By \Cref{high wt}, we just need to consider $ w\in \mcal I_c\cup\mcal I_d $.
%Further, without loss of generality, assume $ w \in \mcal I_c $.
%By \Cref{t type}, $ w\in \t{Ad}(\mcal C) $.

By the definition of admissibility and \cite{GJ}, it suffices to show that this form is positive semi-definite.
Further, it is enough to show 
$$\sum\limits_{i,j=1}^{n} \langle\pi(x_{j}^{\#}\cdot x_i)\xi_i,\xi_j\rangle \geq 0$$
for $ x_i \in \mcal  A\Lambda^{v_i}_{\emptyset,w} $, $ v_i \in \Irr(\mcal{C}*\mcal{D}) $, $ \xi_i \in \mcal H $.
%,  $ u \in \C[\mcal I_e] $ and $ v_i \in \mcal I_c $ are such that $ \mcal C(v_i,wv_i) \neq \{0\} $,  for each $ i=1,2,\ldots n $.
When $ w = \emptyset $, the sum becomes $ \sum \limits^n_{i=1} \norm{\pi(x_i) \xi_i}^{2}_{\mcal H} \geq 0 $.
It remains to consider the case $ w \in \textbf{S}(\mcal{C})\cup \textbf{S}(\mcal{D}) $.
Suppose $w\in \textbf{S}(\mcal{C})$.
In order to have $ \mcal A \Lambda^{v_i}_{\emptyset,w}  = \left(\mcal{C}*\mcal{D}\right) (v_i , wv_i)$ nonzero, $ v_i $  must be one of $ \mcal{C} $-$ \mcal{C} $ or $ \mcal{C} $-$\mcal{D}$ type.
Let $ v_i = c_i u_i $ where $ c_i \in \IC$ and $ u_i $ is either $ \emptyset $ or of $ \mcal{D} $-$ \mcal{C} $ or $ \mcal{D} $-$ \mcal{D} $ type.
Note that $ wv_i = wc_i u_i $.
As $w\in \mcal{C}$, any morphism $x_{i}\in \left(\mcal{C}*\mcal{D}\right)( c_i u_i, wc_i u_i  )$ is of the form $x_{i}=z_{i}\otimes 1_{u_i}$, where $z_{i}\in \mcal{C}(c_{i}, wc_{i})$.

One may express this in another useful way: $ x_i = z_i \cdot 1_{u_i} $ where we view $ z_i \in \mcal A \mcal C^{c_i}_{\mathds 1,w} \subset \mcal A \Lambda_{\emptyset,w}$, and $ 1_{u_i} \in \mcal A \Lambda^{u_i}_{\emptyset,\emptyset}$.
Setting $ \zeta_i := \pi(1_{u_i}) \xi_i $, $ 1\leq i \leq n $, we have
\[
\sum\limits_{i,j=1}^{n} \langle\pi(x_{j}^{\#}\cdot x_i)\xi_i,\xi_j\rangle = \sum\limits_{i,j=1}^{n} \langle\pi^c(z_{j}^{\#}\cdot z_i)\zeta_i,\zeta_j\rangle\ge 0
\]
where the last inequality follows from admissibility of $ (\pi^c , \mcal H) $.
An entirely analogous argument holds for the case $w\in \textbf{S}(\mcal{D})$.
\comments{Let $ \mcal {\wt H}_w $ be the completion of the quotient of $ \mcal {\widehat H}_w $ over the null space of the form.
Define the action of $ z \in \mcal A\Lambda_{w_1,w_2} $ by $ \wt\pi(x)[x\otimes\xi] = [zx\otimes\xi] $ where $ x \in \mcal A\Lambda_{\emptyset,w_1} $ and $ \xi \in \mcal H $.
Boundedness of $ \widetilde{\pi} (x) $ and well-definedness and $ * $-homomorphism property of the action $ \widetilde \pi $ follows from \red{[GJ] enough or more?}.
Finally, $ \widetilde H_\emptyset \ni [x \otimes \xi] \longmapsto \pi(x)\xi \in \mcal H $ extends to an isometric isomorphism which intertwines the action of $ \left. \widetilde \pi  \right|_{\mcal A \Lambda_{\emptyset, \emptyset}} $ and $ \pi $.}
\end{proof}
\medskip

\begin{proof}[Proof of Proposition \ref{freeprodweight0}]
Let $ i_{\mcal{C}} $ (resp., $ i_{\mcal{D}} $) be the canonical $*$-inclusion of $ \text{Fus}(\mcal{C}) $ (resp., $ \text{Fus}(\mcal{D})$) into $ \text{Fus}(\mcal{C}\ast \mcal{D}) $.

If $(\pi,\mcal H)$ is any admissible representation of $\text{Fus}(\mcal{C}\ast \mcal{D})$, then $(\pi\circ i_{\mcal{C}}, \mcal H)$ and $(\pi\circ i_{\mcal{D}}, \mcal H)$ are admissible representations of $\text{Fus}(\mcal{C})$ and $\text{Fus}(\mcal{D})$ respectively by Lemma \ref{freeadmis}.  Therefore, for any $x\in \text{Fus}(\mcal{C})$, 
$$||i_{\mcal{C}}(x)||_{\pi}=||x||_{\pi\circ i_{\mcal{C}}}\le ||x||_{C^{*}_{u}(\mcal{C})} \ .$$  
By the definition of the universal norm, 
$$||i_{\mcal{C}}(x)||_{C^{*}_{u}(\mcal{C}*\mcal{D})}=\sup_{\pi^{\prime}} ||i_{\mcal{C}}(x)||_{\pi^{\prime}}$$
where the supremum is taken over all admissible representations of $\text{Fus}(\mcal{C}\ast\mcal{D})$.
Thus the map $i_{\mcal{C}}$ extend to $*$-homomorphisms $\iota_{\mcal{C}}: C^{*}_{u}(\mcal{C})\rightarrow C^{*}_{u}(\mcal{C}\ast\mcal{D})$.
The same argument applies to $\mcal{D}$, yielding an extension $\iota_{\mcal{D}}: C^{*}_{u}(\mcal{D})\rightarrow  C^{*}_{u}(\mcal{C}\ast\mcal{D})$.

Let $ A $ be any C*-algebra with *-homomorphisms $ \gamma_{\mcal{C}} : C^{*}_u(\mcal{C}) \rightarrow A$ and $ \gamma_{\mcal{D}} : C^{*}_u(\mcal{D}) \rightarrow A  $.
By the universal property of free product of ordinary $ * $-algebras, there is a unique $ * $-homomorphism $ h : \text{Fus}(\mcal{C}\ast\mcal{D}) \rightarrow A$ such that $h\circ i_{\mcal{C}}= \gamma_{\mcal{C}}|_{\text{Fus}(\mcal{C})}$ and $h\circ i_{\mcal{D}}= \gamma_{\mcal{D}}|_{\text{Fus}(\mcal{D})}$.
By density of the fusion algebras in their universal C*-algebras, to conclude the proof it suffices to show that $h$ extends to a $*$-homomorphism $\gamma_{\mcal{C}}\ast\gamma_{\mcal{D}}:C^{*}_{u}(\mcal{C}\ast\mcal{D})\rightarrow A$, which is equivalent to showing $||h(x)||_{A}\le ||x||_{C^{*}_{u}(\mcal{C}*\mcal{D})}$.

Without loss of generality, assume $ A \subset B(\mcal K) $ for some Hilbert space $ \mcal K $.
Since $||\gamma_{\mcal{C}}(y)||_{A}\le||y||_{C^{*}_{u}(\mcal{C})}$ for every $y\in \text{Fus}(\mcal{C})$, $ (\gamma_{\mcal{C}}|_{\text{Fus}(\mcal{C})}, \mcal K) $ is admissible and similarly, $ (\gamma_{\mcal{D}}|_{\text{Fus}(\mcal{D})}, \mcal K) $ is also admissible.
Thus, by Lemma \ref{freeadmis}, $(h,\mcal K)$ is an admissible representation of $\text{Fus}\left(\mcal{C}*\mcal{D}\right)$.
Therefore, $||x||_{h}=||h(x)||_{A}\le ||x||_{C^{*}_{u}(\mcal{C}*\mcal{D})} $.
% at the level of fusion algebras and  $ C^*_u(\mcal F_e)\cong C^*_u(\mcal F_c)\ast C^*_u(\mcal F_d) $. 
\end{proof}
%%%%%%%%%%%%%%%%%%%%%%%%%%%%%%%%%%%%%%
%%%%%%%%%%%%%%%%%%%%%%%%%%%%%%%%%%%%%%
This immediately implies the following corollary:

\begin{cor}  The category of $Rep_{0}(\mcal{A}\Lambda)$ is equivalent as a W*-category to $Rep(C^{*}_{u}(\mcal{C})\ast C^{*}_{u}(\mcal{D}))$.
\end{cor}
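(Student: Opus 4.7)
The plan is to chain together two W*-equivalences, both of which are essentially immediate from the machinery already developed. The idea is to interpret $Rep_{0}(\mcal{A}\Lambda)$ in terms of representations of the weight-$\emptyset$ corner of the annular algebra, making the corollary a direct consequence of Proposition \ref{freeprodweight0}.

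More precisely, the general setup recalled at the start of this section identifies $Rep_{0}(\mcal{A}\Lambda)$ with the image of the fully faithful induction functor $Ind_{0} : Rep(\mcal{J}\Lambda_{0}) \to Rep(\mcal{A}\Lambda)$, and describes this image as the category of admissible representations of the fusion algebra corner $\mcal{A}\Lambda_{\emptyset,\emptyset} \cong \text{Fus}(\mcal{C}*\mcal{D})$ (using the canonical isomorphism from \cite[Proposition 3.1]{GJ} together with Lemma \ref{red wt}, which allows us to identify the weight-$\emptyset$ centralizer algebras in $\mcal{A}\Lambda$ and in the tube algebra $\mcal{A}$). By the defining universal property of $C^{*}_{u}(-)$, this gives a canonical W*-equivalence
\[
Rep_{0}(\mcal{A}\Lambda) \;\cong\; Rep\bigl(C^{*}_{u}(\mcal{C}*\mcal{D})\bigr).
\]

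Second, Proposition \ref{freeprodweight0} supplies a $*$-isomorphism $C^{*}_{u}(\mcal{C}*\mcal{D}) \cong C^{*}_{u}(\mcal{C}) * C^{*}_{u}(\mcal{D})$, which in turn induces an equivalence of their representation W*-categories. Composing the two equivalences yields the desired identification $Rep_{0}(\mcal{A}\Lambda) \cong Rep(C^{*}_{u}(\mcal{C}) * C^{*}_{u}(\mcal{D}))$. There is no genuine obstacle here: all the nontrivial content is already packaged in Proposition \ref{freeprodweight0}, whose proof in turn rests on the positivity argument of Lemma \ref{freeadmis}. The corollary is simply the W*-categorical reformulation of those results.
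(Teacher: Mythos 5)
Your proposal is correct and follows the same route as the paper: the paper derives this corollary immediately from Proposition \ref{freeprodweight0}, using the identification (stated at the start of the section) of $Rep_{0}(\mcal{A}\Gamma)$ with the admissible representations of the fusion algebra, together with Lemma \ref{red wt} to pass between $\mcal{A}\Lambda$ and the tube algebra. Your write-up just makes explicit the chain of equivalences that the paper leaves implicit.
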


On one hand, it is well known that representation categories of free product algebras are wild and uncontrollable, and thus this answer for describing $Rep_{0}(\mcal{A}\Lambda)$ is somewhat unsatisfactory, compared to descriptions of other representation categories such as $Rep(\mcal{A}TLJ)$ (\cite{GJ}).
On the other hand, there are a plethora of ways to produce examples of representations of free products, so these categories are quite flexible.
For example, given two states $\psi, \phi$ on C*-algebras $A$ and $B$, one can construct the free convolution state $\psi*\phi$ on the C*-algebra $A*B$ (\cite{A}).
Alternatively one simply has to take a representation of $A$ and one of $B$, and identify their underlying Hilbert space.

We now move on to describing the higher weight categories, which, depending on $\mcal{C}$ and $\mcal{D}$, can be more manageable.
As described in the beginning of the section $Rep_{+}(\mcal{A}\Lambda)=Rep(\mcal{A}\Lambda/\mcal{J}\Lambda_{0})$.
We have the following lemma:

\begin{lem} 
As $*$-algebras, $\mcal{A}\Lambda / \mcal{J}\Lambda_{0}\cong \mcal{A}\mcal{C} /\mcal{J}\mcal{C}_{0} \oplus \mcal{A}\mcal{D} /\mcal{J}\mcal{D}_{0}\oplus \mcal{A}\W$.
\end{lem}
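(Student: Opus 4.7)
The plan is to examine the quotient weight-pair by weight-pair, using that $\mcal{J}\Lambda_0 \cap \mcal{A}\Lambda_{v,w} = \mcal{A}\Lambda_{\emptyset,w}\cdot \mcal{A}\Lambda_{v,\emptyset}$ (the middle factor $\mcal{A}\Lambda_{\emptyset,\emptyset}$ is absorbed into either outer factor by unitality on the corners). Combining this with the structural lemmas proved above: (i) every $\mcal{A}\Lambda_{v,w}$ with $v=\emptyset$ or $w=\emptyset$ lies in $\mcal{J}\Lambda_0$ trivially; (ii) all $\IC$--$\ID$ cross pieces lie in $\mcal{J}\Lambda_0$ by Lemma \ref{adjointcd}; (iii) the middle-object length-$\geq 2$ summand of each $\IC$--$\IC$ and $\ID$--$\ID$ piece lies in $\mcal{J}\Lambda_0$ by Lemma \ref{adjointcc}(ii); (iv) all pieces between $\W$ and $\{\emptyset\}\cup \IC\cup \ID$ vanish by Lemma \ref{uneq len}; and (v) $\mcal{A}\W$ is a direct summand of $\mcal{A}\Lambda$ satisfying $\mcal{A}\W\cap \mcal{J}\Lambda_0=0$, again by Lemma \ref{uneq len}. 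What remains in $\mcal{A}\Lambda/\mcal{J}\Lambda_0$ is therefore the image of $\mcal{A}\W$ together with the middle-object-in-$\Irr(\mcal{C})$ parts of the $\IC$--$\IC$ corners (and the analogous $\mcal{D}$-part).

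By Lemma \ref{adjointcc}(i) and the trivial identification $\mcal{A}\Lambda^{\mathbbm{1}}_{c_1,c_2}=\mcal{C}(c_1,c_2)=\mcal{A}\mcal{C}^{\mathbbm{1}}_{c_1,c_2}$, this middle-object-in-$\Irr(\mcal{C})$ part is canonically identified with $\mcal{A}\mcal{C}_{c_1,c_2}$ for $c_1,c_2\in\IC$, and the annular products agree because the sums in the two multiplication formulae run over the same index set $\Irr(\mcal{C})$. Moreover the $\mathbbm{1}$-row and column of $\mcal{A}\mcal{C}$ automatically lie in $\mcal{J}\mcal{C}_0$, so $\mcal{A}\mcal{C}/\mcal{J}\mcal{C}_0\cong \left(\bigoplus_{c_1,c_2\in \IC}\mcal{A}\mcal{C}_{c_1,c_2}\right)/\mcal{J}\mcal{C}_0$. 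It therefore suffices to prove, for each $c_1,c_2\in \IC$, the equality $\mcal{A}\mcal{C}_{c_1,c_2}\cap \mcal{J}\Lambda_0 = \mcal{A}\mcal{C}_{\mathbbm{1},c_2}\cdot \mcal{A}\mcal{C}_{c_1,\mathbbm{1}}$, with the analogous statement for $\mcal{D}$.

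The containment $\supseteq$ is immediate. For $\subseteq$, which I view as the main obstacle, the plan is to first establish the factorization $\mcal{A}\Lambda_{\emptyset,c_2} = \mcal{A}\mcal{C}_{\mathbbm{1},c_2}\cdot \mcal{A}\Lambda_{\emptyset,\emptyset}$ (and symmetrically $\mcal{A}\Lambda_{c_1,\emptyset} = \mcal{A}\Lambda_{\emptyset,\emptyset}\cdot \mcal{A}\mcal{C}_{c_1,\mathbbm{1}}$) via a Frobenius-reciprocity computation. Explicitly, for $v=cv'\in\Irr(\mcal{C}*\mcal{D})$ with $c\in \IC$ and $v'$ either empty or of $\mcal{D}$-leading type, Frobenius gives $(\mcal{C}*\mcal{D})(cv',c_2cv')\cong (\mcal{C}*\mcal{D})(v'\bar{v'},\bar{c}c_2c)$; since $\bar{c}c_2c$ is a pure $\mcal{C}$-object and all non-trivial simple summands of $v'\bar{v'}$ are of $\mcal{D}$-$\mcal{D}$ type, only the $\mathbbm{1}$-summand contributes, and the morphism space reduces to $\mcal{C}(c,c_2c)=\mcal{A}\mcal{C}^c_{\mathbbm{1},c_2}$. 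The inverse isomorphism is explicitly $\alpha\mapsto \alpha\otimes 1_{v'}$, which is exactly the annular product $\alpha\cdot 1_{v'}$. Combining these factorizations yields $\mcal{J}\Lambda_0\cap \mcal{A}\Lambda_{c_1,c_2} = \mcal{A}\mcal{C}_{\mathbbm{1},c_2}\cdot \mcal{A}\Lambda_{\emptyset,\emptyset}\cdot \mcal{A}\mcal{C}_{c_1,\mathbbm{1}}$.

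Finally I would project this onto the middle-object-in-$\Irr(\mcal{C})$ part. For $\alpha\in \mcal{A}\mcal{C}^v_{\mathbbm{1},c_2}$, $\gamma\in \mcal{A}\mcal{C}^{v'}_{c_1,\mathbbm{1}}$ and $1_u\in \mcal{A}\Lambda^u_{\emptyset,\emptyset}$ with $u\in \Irr(\mcal{C}*\mcal{D})\setminus \Irr(\mcal{C})$, the middle objects appearing in $\alpha\cdot 1_u\cdot \gamma$ are simple sub-objects of $v\otimes u\otimes v'$; but tensoring $v,v'\in \Irr(\mcal{C})$ against the alternating word $u$ only fuses against its $\mcal{C}$-endpoints and cannot cancel any of $u$'s (non-trivial) $\mcal{D}$-letters, so every such simple sub-object retains a $\mcal{D}$-letter and hence lies outside $\Irr(\mcal{C})\cup\{\mathbbm{1}\}$. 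Only the terms with $u\in \Irr(\mcal{C})$ survive the projection, giving $\mcal{A}\mcal{C}_{\mathbbm{1},c_2}\cdot \mcal{A}\mcal{C}_{\mathbbm{1},\mathbbm{1}}\cdot \mcal{A}\mcal{C}_{c_1,\mathbbm{1}} = \mcal{A}\mcal{C}_{\mathbbm{1},c_2}\cdot \mcal{A}\mcal{C}_{c_1,\mathbbm{1}}$ as required. The identical argument works for $\mcal{D}$, and the cross-products between the three summands automatically vanish in $\mcal{A}\Lambda$ for weight-pair reasons (the $\IC$, $\ID$ and $\W$ weights are pairwise disjoint), so the resulting map is a $*$-algebra isomorphism.
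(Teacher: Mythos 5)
Your proof is correct and follows essentially the same route as the paper's: split off $\mcal{A}\W$ via Lemma \ref{uneq len}, observe that the $\emptyset$-rows and columns, the $\IC$--$\ID$ cross pieces (Lemma \ref{adjointcd}), and the middle-object-length-$\ge 2$ pieces (Lemma \ref{adjointcc}) all die in the quotient, and identify what remains. The only difference is that you spell out the final identification $\mcal{J}\Lambda_0\cap\mcal{A}\mcal{C}_{c_1,c_2}=\mcal{J}\mcal{C}_0\cap\mcal{A}\mcal{C}_{c_1,c_2}$ (via the factorization $\mcal{A}\Lambda_{\emptyset,c_2}=\mcal{A}\mcal{C}_{\mathbbm{1},c_2}\cdot\mcal{A}\Lambda_{\emptyset,\emptyset}$ and the projection onto $\Irr(\mcal{C})$-middle objects), a step the paper records only as ``it is now clear.''
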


\begin{proof}
Recall that $\mcal{A}\Lambda\cong \mcal{A}[\Lambda\setminus \W]\oplus \mcal{A}\W$.  %Let $\mcal{J}\Lambda_{0}\subseteq  \mcal{A}\Lambda $ be the ideal in $\mcal{A}\Lambda$ generated by $\mcal{A}\Lambda_{\phi,\phi}$.
From Lemma \ref{uneq len}, we see that $\mcal{J}\Lambda_{0}\subseteq \mcal{A}[\Lambda\setminus \W]$, and thus

$$\mcal{A}\Lambda / \mcal{J}\Lambda_{0}\cong \mcal{A}[\Lambda\setminus \W] / \mcal{J}\Lambda_{0}\oplus \mcal{A}\W $$

Thus we consider the spaces $\mcal{A}\Lambda^{v}_{w_{1},w_{2}}$ with $w_{1},w_{2}\in \textbf{S}(\mcal{C})\cup \textbf{S}(\mcal{D})$, and $v\in \Irr(\mcal{C}*\mcal{D})$.  By Lemma \ref{adjointcd} and Lemma \ref{adjointcc}, the image of these spaces under the quotient is $0$ unless $w_{1}$ and $w_{2}$ are either both in $\textbf{S}(\mcal{C})$ and $v\in \Irr(\mcal{C})$ or both $w_{1}$ and $w_{2}$ are in $\textbf{S}(\mcal{D})$ and $v\in \Irr(\mcal{D})$.
Since $\mcal{J}\mcal{C}_{0}, \mcal{J}\mcal{D}_{0}\subseteq \mcal{J}\Lambda_{0}$, it is now clear that the quotient map assembles into an isomorphism  $\mcal{A}[\Lambda\setminus \W] / \mcal{J}\Lambda_{0}\cong \mcal{A}\mcal{C}/ \mcal{J}\mcal{C}_{0}\oplus \mcal{A}\mcal{D}/\mcal{J}\mcal{D}_{0}$, concluding the proof.
\end{proof}

Finally, we recall that $\W_{0}$ is the set of cyclic equivalence classes of words in $\W$, and note that $Rep(\mcal{A}\W)\cong Rep(\Z)^{\oplus \W_{0}}$.
The above results imply Theorem \ref{mainthm}, which is the main result of this article.
\section{Examples}

In this section, we apply the main result to several examples.  First, we show how this matches another known result.

\begin{ex}{\textbf{Free products of group categories}.}
In particular, for any countable group $G$, we consider the rigid C*-tensor category $\Hilb_{f.d.}(G)$ of finite dimensional $G$-graded Hilbert spaces.  Let $\Lambda$ denote the set of conjugacy classes of $G$.  For each $\lambda\in \Lambda$ we can define $C_{\lambda}(G)$ to be the centralizer subgroup of some element $g\in \lambda$.  We note that different choices of $g\in \Lambda$ yield conjugate subgroups, and so $C_{\lambda}(G)$ is well defined up to isomorphism.  Then, from \cite{GJ}, the category of annular representations 

$$Rep(\mcal{A})\cong \bigoplus_{\lambda\in \Lambda} Rep(C_{\lambda}(G)) $$

Now, for any two countable groups $G$ and $H$, its easy to see that $\Hilb_{f.d.}(G)*\Hilb_{f.d.}(H)$ is equivalent as a C*-tensor category to $\Hilb_{f.d.}(G*H)$.  Thus we can compare our result for $\Hilb_{f.d.}(G)*\Hilb_{f.d.}(H)$ to the above result for $\Hilb_{f.d.}(G*H)$.

Since $C^{*}_{u}(\Hilb_{f.d.}(G))$ is isomorphic to the universal group C*-algebra $C^{*}_{u}(G)$, and $C^{*}_{u}(G*H)\cong C^{*}_{u}(G)*C^{*}_{u}(H)$, we can identify the first component in the main theorem (Theorem \ref{mainthm}) with $Rep(G*H)$.

Note that there is always distinguished conjugacy class $[1]\in \Lambda$, the conjugacy class of the unit $1$.  We have $C_{[1]}(G)=G$.  It is easy to see that 
$$Rep_{+}(\mcal{A}\Hilb_{f.d.}(G))\cong \bigoplus_{\lambda\in \Lambda\setminus [1]} C_{\lambda}(G)$$
This helps us identify the second two components, while the last component needs no identification.

Now, consider the group $G*H$.  This group has 4 types of conjugacy classes:  $\{[1]\}, \{[g]\ :\ g\in G\},\{[h]\ :\ h\in H\}$ and $\{[g_{1}h_{1}\cdots g_{k}h_k]\ :\ g_{i}\in G, h_{i}\in H,\ k\ge 1\}$.  It is also easy to see that $C_{[1]}(G*H)=G*H$, $C_{[g]}(G*H)=G$, $C_{[h]}(G*H)=H$ and $C_{[g_{1}h_{1}\cdots g_{k}h_{k}]}=\{(g_{1}h_{1}\cdots g_{k}h_{k})^{n}\ :\ n\in \Z\}\cong \Z$.  It is now easy to see the equivalence of the two descriptions.

\end{ex}

\begin{ex}{\textbf{Fuss-Catalan representations}.}
Bisch and Jones introduced the \textit{Fuss-Catalan} subfactor planar algebras $\mathcal{FC}(\alpha,\beta)$, where $\alpha,\beta\in \{2\cos(\frac{\pi}{n})\ :\ n\ge 3\}\cup [2,\infty)$ \cite{BJ}.
These planar algebras are universal for intermediate subfactors.
For a subfactor planar algebra, the category of affine annular representations in the sense of Jones-Reznikoff \cite{JR} is equivalent to the category of annular representations of the even part of the subfactor (see, for example, \cite[Remark 3.6]{DGG}  or \cite[Corollary 4.4]{NY2}).  The even part of the Fuss-Catalan can be realized as a full subcategory of the free product category $\mathcal{TLJ}(\alpha)*\mathcal{TLJ}(\beta)$.
In particular, if $a\in \mathcal{TLJ}(\alpha)$ is the standard tensor generating object with dimension $\alpha$ and $b\in \mathcal{TLJ}(\beta) $ is the standard tensor generating object with dimension $\beta$, then the full subcategory generated by $abba\in \mathcal{TLJ}(\alpha)*\mathcal{TLJ}(\beta)$ is equivalent to the even part of $\mathcal{FC}(\alpha,\beta)$.
Thus to determine the annular representation category of $\mathcal{FC}(\alpha,\beta)$, it suffices to determine the annular representations of the full subcategory $\mathcal{TLJ}(\alpha)*\mathcal{TLJ}(\beta)$ generated by $abba$.
Let $\mathcal{TLJ}_{0}(\alpha)$ denote the adjoint subcategory, generated by $aa$.
This can also be realized as the even part of the usual Temperley-Lieb-Jones subfactor planar algebras.

We recall briefly that two rigid C*-tensor categories $\mcal{C}$ and $\mcal{D}$ are \textit{weakly Morita equivalent} if there is a rigid C*-2 category with two objects $0$ and $1$, such that the tensor category $\text{End}(0)\cong\mcal{C}$ and the tensor category $\text{End}(1)\cong \mcal{D}$ (see \cite{NY2} for further details).
The two even parts of a subfactor planar algebra are weakly Morita equivalent, but weak Morita equivalence is more general.
If we have two full subcategories of a tensor category, to show they are weakly Morita equivalent, it suffices to find an object $x\in \mcal{C}$ so that $x\overline{x}$ tensor generates one and $\overline{x}x$ tensor generates the other, since one can, using the usual subfactor approach, construct a rigid C*-2 category whose two even parts are as desired.
We apply this in the free product case to obtain the following proposition:

\begin{prop}
The tensor category generated by $abba$ is weakly Morita equivalent to $\mathcal{TLJ}_{0}(\alpha)*\mathcal{TLJ}_{0}(\beta)$.
\end{prop}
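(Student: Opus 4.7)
The plan is to apply the criterion stated in the paragraph immediately above the proposition: I will exhibit an object $x \in \mathcal{TLJ}(\alpha)*\mathcal{TLJ}(\beta)$ such that $x \otimes \overline{x}$ tensor-generates the subcategory generated by $abba$, while $\overline{x} \otimes x$ tensor-generates $\mathcal{TLJ}_{0}(\alpha)*\mathcal{TLJ}_{0}(\beta)$. I would take $x := abb$. Since $a$ and $b$ are self-dual in their respective TLJ categories, $\overline{x} = bba$, whence $x \otimes \overline{x} = abbbba$ and $\overline{x} \otimes x = bbaabb$.

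For the $\overline{x} \otimes x$ side, I would decompose $bbaabb$ using the fusions $bb = \mathbbm{1} \oplus b_{(2)}$ on each outer pair and $aa = \mathbbm{1} \oplus a_{(2)}$ on the central pair. Expanding the product $(\mathbbm{1} \oplus b_{(2)}) \otimes (\mathbbm{1} \oplus a_{(2)}) \otimes (\mathbbm{1} \oplus b_{(2)})$, and further applying $b_{(2)} \otimes b_{(2)} = \mathbbm{1} \oplus b_{(2)} \oplus b_{(4)}$ in the summand where both outer $bb$'s contribute $b_{(2)}$, every simple summand turns out to be an alternating word with letters in $\{a_{(2)}, b_{(2)}, b_{(4)}\}$, so $bbaabb$ lies in $\mathcal{TLJ}_{0}(\alpha)*\mathcal{TLJ}_{0}(\beta)$. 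Moreover $a_{(2)}$ and $b_{(2)}$ both appear as direct summands, and being the fundamental generators of $\mathcal{TLJ}_{0}(\alpha)$ and $\mathcal{TLJ}_{0}(\beta)$, they jointly tensor-generate the free product, giving equality.

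For the $x \otimes \overline{x}$ side, I would first observe that $abbbba$ lies in the subcategory generated by $abba$ via the decomposition $abba \otimes abba = abbaabba = abbbba \oplus abb\,a_{(2)}\,bba$ obtained by fusing the central $aa$. Conversely, using $bbbb = 2\mathbbm{1} \oplus 3\, b_{(2)} \oplus b_{(4)}$ one computes
\[
abbbba = 2\mathbbm{1} \oplus 2\, a_{(2)} \oplus 3\, ab_{(2)}a \oplus ab_{(4)}a,
\]
while $abba = \mathbbm{1} \oplus a_{(2)} \oplus ab_{(2)}a$. Thus $abba$ is a direct summand of $abbbba$, and the extra simple $ab_{(4)}a$ appears as a direct summand of $(ab_{(2)}a) \otimes (ab_{(2)}a)$ via the same $b_{(2)} \otimes b_{(2)}$ fusion. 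Hence the subcategories generated by $abba$ and $abbbba$ coincide, completing the argument.

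The point that requires some care is the choice $x = abb$ rather than the naive $x = ab$: for the latter one has $\overline{x} \otimes x = baab = \mathbbm{1} \oplus b_{(2)} \oplus ba_{(2)}b$, and the summand $ba_{(2)}b$ has outer letters $b$ of odd level, so it does not lie in $\mathcal{TLJ}_{0}(\alpha)*\mathcal{TLJ}_{0}(\beta)$. Padding $x$ with an extra $b$ forces the outer letters of $\overline{x} \otimes x$ into $bb$-pairs, whose decomposition stays within the adjoint subcategories; no analogous obstruction arises on the $abba$ side, since the subcategory it generates already contains simples with odd-level outer letters.
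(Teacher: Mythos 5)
Your proposal is correct and follows essentially the same route as the paper: the same Morita-equivalence criterion with the same choice $x=abb$, checking that $\langle abbbba\rangle=\langle abba\rangle$ (via $aa,bb\supseteq\mathbbm{1}$) and that $bbaabb$ contains $aa$ and $bb$ as subobjects. Your explicit fusion decompositions and the remark on why $x=ab$ fails are additional detail, not a different argument.
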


\begin{proof}
It suffices to find an object $x\in \mathcal{TLJ}(\alpha)*\mathcal{TLJ}(\beta) $ such that $\langle x\overline{x}\rangle =\langle abba\rangle $ and $\langle \overline{x} x\rangle=\mathcal{TLJ}(\alpha)*\mathcal{TLJ}(\beta)$.
Choose $x:=abb$.
Then since both $aa$ and $bb$ contain the tensor unit as a subobject, we see $\langle abbbba\rangle=\langle abba\rangle$.
On the other hand, $bbaabb$ contains $aa$ and $bb$ as subobjects, and so clearly $\langle bbaabb\rangle=\langle aa, bb\rangle$.
\end{proof}

Again, by \cite[Remark 3.6]{DGG} or \cite[Corollary 4.4]{NY2}, the above proposition implies the following:

\begin{cor}
The category of affine annular representations of the subfactor planar algebra $\mathcal{FC}(\alpha,\beta)$  is equivalent as a W*-category to the annular representation category of $\mathcal{TLJ}_{0}(\alpha)*\mathcal{TLJ}_{0}(\beta)$.
\end{cor}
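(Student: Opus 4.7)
The plan is to chain together three ingredients: the identification of affine annular representations of a subfactor planar algebra with annular representations of its even part, the realization of the even part of $\mathcal{FC}(\alpha,\beta)$ as the full subcategory of $\mathcal{TLJ}(\alpha)*\mathcal{TLJ}(\beta)$ tensor-generated by $abba$, and the invariance of the annular representation category under weak Morita equivalence.

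First, I would invoke the standard fact (recalled in the discussion preceding the proposition, via \cite[Remark 3.6]{DGG} and \cite[Corollary 4.4]{NY2}) that for a subfactor planar algebra $P$, the category of affine annular representations in the sense of Jones--Reznikoff is equivalent as a W*-category to $Rep(\mcal A \mcal E(P))$, where $\mcal E(P)$ denotes either even part. Applying this to $P=\mathcal{FC}(\alpha,\beta)$ reduces the problem to identifying $Rep(\mcal A \mcal E(\mathcal{FC}(\alpha,\beta)))$, where $\mcal E(\mathcal{FC}(\alpha,\beta))$ is by hypothesis the full subcategory of $\mathcal{TLJ}(\alpha)*\mathcal{TLJ}(\beta)$ tensor-generated by $abba$.

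Next I would apply the proposition just proved: the category $\langle abba\rangle$ is weakly Morita equivalent to $\mathcal{TLJ}_{0}(\alpha)*\mathcal{TLJ}_{0}(\beta)$. Combined with the stated fact that weakly Morita equivalent rigid C*-tensor categories have equivalent categories of annular representations (again \cite[Remark 3.6]{DGG} or \cite[Corollary 4.4]{NY2}; this can be seen from the fact that both annular representation categories are Morita equivalent to the Drinfeld center of the ambient 2-category, which is an invariant of the 2-category), we conclude
\[
Rep(\mcal A\, \langle abba\rangle) \;\cong\; Rep(\mcal A(\mathcal{TLJ}_{0}(\alpha)*\mathcal{TLJ}_{0}(\beta))).
\]
Stringing the equivalences together yields the corollary.

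The only nontrivial content beyond citation is already in the preceding proposition; the corollary itself is essentially a two-line deduction. The most delicate step conceptually is verifying that the two invocations of \cite[Remark 3.6]{DGG}/\cite[Corollary 4.4]{NY2} both apply in the generality needed, namely that the affine annular notion for the specific subfactor planar algebra $\mathcal{FC}(\alpha,\beta)$ matches the even-part tube algebra picture, and that weak Morita equivalence of the rigid C*-tensor categories entering the free product decomposition is sufficient to transport all relevant representation-theoretic data. Once these citations are accepted, there is no further computation required.
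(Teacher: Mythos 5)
Your proposal is correct and follows the paper's argument exactly: the paper also deduces the corollary by combining the identification of affine annular representations with annular representations of the even part, the realization of the even part of $\mathcal{FC}(\alpha,\beta)$ as $\langle abba\rangle$, the preceding weak Morita equivalence proposition, and the invariance of the annular representation category under weak Morita equivalence via \cite[Remark 3.6]{DGG} and \cite[Corollary 4.4]{NY2}. No substantive difference from the paper's (one-line) proof.
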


This category $\mathcal{TLJ}_{0}(\alpha)$ is fully described in \cite{JR}, and thus combining those results with ours leads to a description of the representations of Fuss-Catalan categories.

\end{ex}
\bibliographystyle{alpha}

%\bibliographystyle{alpha}
%\bibliography{ref}
\end{document}